\colorlet{darkblue}{blue!50!black}
\renewcommand{\Re}{\mathop{\rm Re}\nolimits}
\newcommand{\RInt}{\mathop{\rm ri}\nolimits}
\newcommand{\p}{\partial}
\newcommand{\e}{\varepsilon}
\newcommand{\C}{{\mathbb C}}
\newcommand{\R}{{\mathbb R}}
\newcommand{\Z}{{\mathbb Z}}
\newcommand{\IP}{{\mathbb P}}
\newcommand{\pP}{{\mathbb P}}
\newcommand{\E}{{\mathbb E}}
\newcommand{\N}{{\mathbb N}}
\newcommand{\la}{\lambda}
\newcommand{\ty}{\infty}
\newcommand{\om}{\omega}
\newcommand{\de}{\delta}
\newcommand{\pppp}{{\mathfrak p}}
\newcommand{\wwww}{{\mathfrak w}}
\newcommand{\mmmm}{{\mathfrak m}}
\newcommand{\bbar}{\boldsymbol{|}}
 \newcommand{\HHH}{{\boldsymbol{H}}}
\newcommand{\uu}{{\boldsymbol{ {u}}}}
 \newcommand{\Ss}{{\boldsymbol {S}}}
\newcommand{\eeta}{{\boldsymbol{{\eta}}}}
\newcommand{\zzeta}{{\boldsymbol{{\zeta}}}}
\newcommand{\III}{{\boldsymbol{I}}}
\newcommand{\LLambda}{{\boldsymbol\Lambda}}
\newcommand{\ttau}{{\boldsymbol\tau}}
\newcommand{\BB}{{\cal B}}
\newcommand{\CC}{{\cal C}}
\newcommand{\DD}{{\cal D}}
\newcommand{\FF}{{\cal F}}
\newcommand{\HH}{{\cal H}}
\newcommand{\KK}{{\cal K}}
\newcommand{\LL}{{\cal L}}
\newcommand{\MM}{{\cal M}}
\newcommand{\PP}{{\cal P}}
\newcommand{\RR}{{\cal R}}
\newcommand{\VV}{{\cal V}}
\newcommand{\lag}{\langle}
\newcommand{\rag}{\rangle}
\newcommand{\dd}{{\textup d}}
\newcommand{\PPPP}{{\mathfrak P}}
\newcommand{\SSSS}{{\mathfrak S}}
\newcommand{\BBBB}{{\mathfrak B}}
\newcommand{\SSS}{{\mathscr S}}
\newcommand{\uuu}{{\boldsymbol{\mathit u}}}
\newcommand{\nnn}{{\boldsymbol{\mathit n}}}
\newcommand{\vvv}{{\boldsymbol{\mathit v}}}
\newcommand{\zzz}{{\boldsymbol{\mathit z}}}
\newcommand{\lspan}{\mathop{\rm span}\nolimits}
\newcommand{\var}{\mathop{\rm Var}\nolimits}
\newcommand{\supp}{\mathop{\rm supp}\nolimits}
\newcommand{\esssup}{\mathop{\rm ess\ sup}\nolimits}
\newcommand{\diver}{\mathop{\rm div}\nolimits}
\newcommand{\Osc}{\mathop{\rm Osc}\nolimits}
\theoremstyle{plain}
\newtheorem*{mtheorem}{Main Theorem}
\newtheorem{theorem}{Theorem}[section]
\newtheorem{lemma}[theorem]{Lemma}
\newtheorem{proposition}[theorem]{Proposition}
\newtheorem{corollary}[theorem]{Corollary}
\theoremstyle{definition}
\newtheorem{definition}[theorem]{Definition}
\theoremstyle{remark}
\newtheorem{remark}[theorem]{Remark}
\numberwithin{equation}{section}
\begin{document}
\author{V.~Jak\v si\'c\footnote{Department of Mathematics and Statistics,
McGill University, 805 Sherbrooke Street West, Montreal, QC, H3A 2K6
Canada; e-mail: \href{mailto:Jaksic@math.mcgill.ca}{Jaksic@math.mcgill.ca}}
\and V. Nersesyan\footnote{Laboratoire de Mat\'ematiques, UMR CNRS 8100, Universit\'e de Versailles-Saint-Quentin-en-Yvelines, F-78035 Versailles, France;  e-mail: \href{mailto:Vahagn.Nersesyan@math.uvsq.fr}{Vahagn.Nersesyan@math.uvsq.fr}}
\and C.-A.~Pillet\footnote{Aix Marseille Universit\'e, CNRS, CPT, UMR 7332, Case 907, 13288 Marseille, France; 
Univerist\'e de Toulon, CNRS, CPT, UMR 7332, 83957 La Garde,  France; e-mail: \href{mailto:Pillet@univ-tln.fr}{Pillet@univ-tln.fr}}
\and A.~Shirikyan\footnote{Department of Mathematics, University of Cergy--Pontoise, CNRS UMR 8088, 2 avenue Adolphe Chauvin, 95302 Cergy--Pontoise, France; e-mail: \href{mailto:Armen.Shirikyan@u-cergy.fr}{Armen.Shirikyan@u-cergy.fr}}$\ ^{,*}$}

\title{Large deviations and mixing for dissipative PDE's with unbounded random kicks}
\date{}
\maketitle

\begin{abstract}
We study the problem of exponential mixing and large deviations for  discrete-time Markov processes associated with a class of random dynamical systems. Under some dissipativity and regularisation hypotheses for the underlying deterministic dynamics and a non-degeneracy condition for the driving random force, we discuss the existence and uniqueness of a stationary measure and its exponential stability in the Kantorovich--Wasserstein metric. We next turn to the large deviation principle and establish its validity for the occupation measures of the Markov processes in question. The obtained results  extend those established in~\cite{JNPS-2012} for the case of the bounded noise and  can be applied to the 2D Navier--Stokes system in a bounded domain and to the complex Ginzburg--Landau equation. 
 
\smallskip
\noindent
{\bf AMS subject classifications:} 35Q30, 35Q56, 76D05, 60B12, 60F10

\smallskip
\noindent
{\bf Keywords:} Dissipative PDE's, Navier--Stokes system, Ginzburg--Landau equation, unbounded kicks, large deviations, occupation measures
\end{abstract}

\tableofcontents

\setcounter{section}{-1}

\section{Introduction}
\label{s0} 
The goal of the present paper is to study the problem of large deviations from a stationary distribution for a class of PDE's perturbed by a  smooth random force. This question was already investigated by the authors in~\cite{JNPS-2012} for PDE's with bounded perturbations in which case the validity of the large deviation principle (LDP) was established for initial data belonging to the support of the unique stationary measure. In this paper, we extend that result to the situation in which the random noise is unbounded. Let us mention straightaway that, as is well known from the case of a locally compact phase space (e.g., see the discussion in the introduction of~\cite{DV-1976}), the generalisation of the LDP to the unbounded case involves some new phenomena, and the mere verification of exponential tightness is far from being sufficient. 

To describe the main result of this paper, we consider a bounded domain $D\subset\R^2$ with $C^2$-smooth boundary~$\p D$ and the Navier--Stokes system in~$D$ supplemented with the Dirichlet boundary condition: 
\begin{equation} \label{0.1}
\dot u+\langle u,\nabla \rangle u-\nu\Delta u+\nabla p=h(x)+\eta(t,x), \quad\diver u=0, \quad u\bigr|_{\p D}=0. 
\end{equation}
Here $u=(u_1,u_2)$ and $p$ are unknown velocity field and pressure of the fluid, 
$$
\langle u,\nabla \rangle=u_1\p_1+u_2\p_2, \quad \Delta=\p_1^2+\p_2^2,
$$
and~$h$ and~$\eta$ are deterministic and random external forces, respectively. Let us introduce the phase space
\begin{equation} \label{0.3}
H=\bigl\{u\in L^2(D,\R^2):\diver u=0\mbox{ in $D$, $\langle u,\nnn\rangle=0$ on $\p D$}\bigr\},
\end{equation}
where~$\nnn$ stands for the outward unit normal vector to~$\p D$, and endow it with the usual $L^2$ norm. We assume that $h\in H$ and that~$\eta$ is a kick process of the form 
\begin{equation} \label{0.2}
\eta(t,x)=\sum_{k=1}^\infty \eta_k(x)\delta(t-k),
\end{equation}
where $\delta(t)$ stands for the Dirac measure on~$\R$ concentrated at zero and~$\{\eta_k\}$ is a sequence of i.i.d.\ random variables in~$H$. Problem~\eqref{0.1}, \eqref{0.2} generates a discrete-time Markov process in~$H$. Namely, for any $u_0\in H$, there is a unique solution $u(t,x)$ for~\eqref{0.1}, \eqref{0.2} that is right-continuous in time and  satisfies the initial condition
\begin{equation} \label{0.4}
u(0,x)=u_0(x),
\end{equation}
and the restrictions of all solutions to the non-negative integers~$\Z_+$ form a discrete-time Markov process, which will be denoted by~$(u_k,\IP_u)$. The ergodic properties of this process are by now well understood; see~\cite{FM-1995,KS-cmp2000,EMS-2001,BKL-2002} for the first results on the ergodic behaviour of the random flow generated by~\eqref{0.1} and the book~\cite{KS-book} for further references. In particular, there is a unique stationary distribution~$\mu$, which attracts exponentially fast all other trajectories, and the strong law of large numbers holds for a broad class of H\"older-continuous functionals $f:H\to \R$ which may grow at infinity. The latter means that
$$
\IP_u\biggl\{\frac1k\sum_{n=0}^{k-1}f(u_n)\to \langle f,\mu\rangle\biggr\}=1\quad\mbox{for any $u\in H$},
$$
where $ \langle f,\mu\rangle$ denotes the integral over~$H$ of the function~$f$ with respect to~$\mu$. 
Now let $\PP(H)$ be the set of all probability measures on~$H$. Our aim is to study the asymptotic behaviour, as $k\to\infty$, of the probabilities
$$
\varPsi_k(\lambda,f,\Gamma):=\IP_\lambda\biggl\{\frac1k\sum_{n=0}^{k-1}f(u_n)\in \langle f,\mu\rangle+\Gamma\biggr\}, 
\quad \lambda\in\PP(H),
$$
where $\IP_\lambda$ stands for the probability law corresponding to the initial distribution~$\lambda$, and  $\Gamma\subset \R$ is a Borel subset. The following theorem, which is a particular case of the main result of this paper, gives a complete description of the large-time behaviour of~$\varPsi_k(\lambda,f,\Gamma)$ on the logarithmic scale and establishes the level-1 LDP for the Markov process~$(u_k,\IP_u)$. We refer the reader to Section~\ref{s1.3} for more general results in an abstract setting and to Section~\ref{s2} for their application to the 2D Navier--Stokes system and the complex Ginzburg--Landau equation. 

\begin{mtheorem}
Let us assume that the i.i.d.\ random variables~$\eta_k$ are distributed according to a non-degenerate Gaussian law on~$H$ concentrated on the Sobolev space of order~$2$ and let $f:H\to\R$  be a continuous function that is bounded on balls of~$H$ and is such that
\begin{equation} \label{0.5}
\frac{|f(v)|}{\|v\|^2}\to0\quad\mbox{as $\|v\|\to\infty$}. 
\end{equation}
Then there is a function $I_f:\R\to[0,+\infty]$ with compact level sets such that, for any initial measure $\lambda\in\PP(H)$ satisfying the condition
\begin{equation} \label{0.6}
\int_He^{\delta \|v\|^2}\lambda(\dd v)<\infty\quad\mbox{for some $\delta>0$},
\end{equation}
we have
\begin{equation} \label{0.7}
-\inf_{y\in\dot \Gamma} I_f(y)\le \liminf_{k\to\infty}\frac1k\log\varPsi_k(\lambda,f,\Gamma)
\le  \limsup_{k\to\infty}\frac1k\log\varPsi_k(\lambda,f,\Gamma)\le -\inf_{y\in\overline{\Gamma}}I_f(y),
\end{equation}
where $\Gamma\subset\R$ is an arbitrary Borel set, and $\dot \Gamma$ {\rm(}respectively, $\overline{\Gamma}${\rm)} stands for its interior {\rm(}closure{\rm)} in~$H$. In particular, inequalities~\eqref{0.7} are true for the stationary solution, as well as for solutions issued from any deterministic point $v\in H$.  
\end{mtheorem}

Let us emphasise that the Gaussian structure of the noise does not play any role, and the result remains valid for a large class of decomposable measures (see Condition~(C) in Section~\ref{s1.1}). Moreover, the LDP holds also for occupation measures of solutions (level-2) and of full trajectories (level-3). 

Before presenting the main ideas of the proof of the above theorem, we briefly mention some earlier results related to the present work. The theory of large deviations from a stationary measure for Markov processes was initiated by Donsker and Varadhan~\cite{DV-1975,DV-1976}, who carried out a comprehensive study of the problem for strong Feller processes with a compact and later a general phase space. Their results were developed and extended by many others; e.g., see the papers~\cite{deacosta-1990,jain-1990,BDT-1995,wu-2000} and the references in~\cite{DS1989}. These works concern the case in which the Markov process in question possesses the strong Feller property. In the context of randomly forced PDE's, the problem of large deviation from a stationary measure was studied in~\cite{gourcy-2007a,gourcy-2007b,JNPS-2013} for various types of rough noise and in~\cite{JNPS-2012} for a smooth kick noise. The present work continues the investigation started in the latter  paper and extends its result to the case of unbounded perturbations. 

We now discuss some ideas of the proof and describe the main novelty of this paper. As in the case of a bounded perturbation, 
the proofs are based on Kiffer's criterion~\cite{kifer-1990} and the key points are the existence of the pressure function and the uniqueness of equilibrium state. These two properties are closely related to the large-time asymptotics of a generalised Markov semigroup $\PPPP_k^V:C_b(H)\to C_b(H)$ defined by
\begin{equation} \label{0.8}
(\PPPP_k^Vf)(u)=\lim_{k\to\infty}\frac1k\log\E_u\bigl\{\exp\bigl(V(u_1)+\cdots+V(u_k)\bigr)f(u_k)\bigl\},\quad f\in C_b(H),
\end{equation}
where~$C_b(H)$ stands for the space of bounded continuous real-valued functions on~$H$,  $\E_u$ denotes the mean value corresponding to the trajectory issued from~$u$,  and~$V\in C_b(H)$ is an arbitrary function. In the case when the random variables~$\eta_k$ are bounded, it was proved in~\cite{JNPS-2012} that a sufficient condition for the existence and stability of a maximal eigenvector for~$\{\PPPP_k\}$ is given by the uniform irreducibility and uniform Feller properties (introduced in~\cite{KS-cmp2000}). In the Markovian situation, stratified versions of these properties are also sufficient; see~\cite{KS-mpag2001}. The situation is different in the case of~\eqref{0.8}, for which one needs to require in addition the following {\it growth condition\/} (which seems to be new in this context): there is an integer $m\ge1$ and positive numbers~$R_0$ and~$C$ such that
\begin{equation} \label{0.9}
\sup_{u\in H}\bigl\{\wwww_m(u)^{-1}(\PPPP_k^V\wwww_m)(u)\bigr\}\le C\sup_{\|u\|\le R_0}(\PPPP_k^V{\mathbf1})(u)
\quad\mbox{for all $k\ge0$},
\end{equation}
where $\mathbf1$ is the function identically equal to~$1$ and $\wwww_m(u)=(1+\|u\|)^m$. The verification of this and uniform Feller properties are the key points of this work. They are based on the hyper-exponential recurrence and a coupling argument, respectively. We refer the reader to Section~\ref{s1.5} for more details on the proof of our main result.  

\medskip
The paper is organised as follows. In Section~\ref{s1}, we describe the abstract model we deal with, formulate our main result in full generality, and outline its proof. Applications of the main theorem to concrete examples of randomly forced PDE's are discussed in Section~\ref{s2}. In Sections~\ref{s4} and~\ref{s5}, we present two auxiliary results. The first of them is an extension of Kifer's criterion to an unbounded phase space, and the second is a general result on large-time asymptotics of generalised Markov semigroups. The details of the proof of the main result are given in Sections~\ref{s8}--\ref{s7}, and the appendix gathers some auxiliary assertions used in the main text.

\subsection*{Acknowledgments} 
The research of VJ was supported by NSERC.
The research of VN was supported by the ANR grants EMAQS (No. ANR 2011 BS01 017 01) and STOSYMAP (No.~ANR 2011 BS01 015 01).  The research of AS was carried out within the MME-DII Center of Excellence (ANR-11-LABX-0023-01) and supported by the ANR grant STOSYMAP and RSF research project 14-49-00079. This paper was finalised when AS was visiting the Mathematics and Statistics Department of the University of McGill, and he thanks the institution for hospitality and excellent working conditions. 

\subsection*{Notation}
Given a Polish space $X$, we denote by $B_X(a,R)$ (respectively, $\mathring{B}_X(a,R)$) the closed (open) ball in~$X$ of radius~$R$ centred at~$a$. In the case when~$X$ is a Banach space and~$a=0$, we write $B_X(R)$ (respectively, $\mathring{B}_X(R)$). We denote by~$\delta_a$ the Dirac mass concentrated at the point $a$ and  by~$\DD(\xi)$ the law of a random variable~$\xi$. Given  a function $V:X\to\R$, we write $\Osc_X(V)=\sup_X V-\inf_X V$. We denote by~$\Z_+$ (respectively, $\Z_-$)  the set of non-negative (non-positive) integers and by~$\R_+$ be the set of non-negative real numbers.

Let $J\subset\R$ be a closed interval and $D\subset\R^d$ be a domain. We shall use the following spaces:

\smallskip
\noindent
$L^\infty(X)$ is the space of bounded measurable functions $f:X\to\R$ endowed with the norm $\|f\|_\infty=\sup_X|f|$. 

\smallskip
\noindent
$C_b(X)$ is the space of continuous functions $f\in L^\infty(X)$.

\smallskip
\noindent
$C(X)$ is the space of continuous functions $f:X\to\R$. $C_+(X)$  is the set of non-negative functions in $C(X)$.

\smallskip
\noindent
$\MM_+(X)$ is the cone of non-negative finite measures on the space~$X$ endowed with the Borel $\sigma$-algebra~$\BB(X)$. For $\mu\in\MM_+(X)$
and an integrable function $f:X\to\R$ we set
$$
\lag f,\mu\rag=\int_X f(u)\mu(\dd u),\qquad
|f|_\mu=\int_X |f(u)|\mu(\dd u).
$$

\smallskip
\noindent
$\PP(X)$ is the set of probability Borel measures on~$X$. We endow it with the Kantorovich--Wasserstein metric denoted by~$\|\cdot\|_L^*$; e.g., see~(1.14) in~\cite{KS-book}. 

\smallskip
\noindent
$H^s(D)$ is the Sobolev space of order~$s\ge0$ on the domain~$D$. We denote by~$\|\cdot\|_s$ the usual  Sobolev norm. In the case $s=0$, we write $L^2(D)$ and~$\|\cdot\|$, respectively. The spaces of scalar and vector functions are denoted by the same symbol, and we often write~$H^s$ and~$L^2$ when the context implies in which domains the spaces are considered. 

\smallskip
\noindent
$H_0^s(D)$ is the closure in~$H^s(D)$ of the space of smooth functions with compact support in~$D$.  

\smallskip
\noindent
$C_b^k(D)$ is the space of $k$ times continuously differentiable functions $f:\overline D\to\R$ that are bounded together with the derivatives of order~$\le k$. In the case $k=0$, we write $C_b(D)$. 

\smallskip
\noindent
$L^p(J,X)$ is the space of measurable functions $f$ from~$J$ to a Banach space~$X$ such that 
$$
\|f\|_{L^p(J,X)}=\biggl(\int_J\|f(t)\|_X^p\dd t\biggr)^{1/p}<\infty.
$$
The middle term should be replaced by $\esssup_{t\in J}\|f(t)\|_X$ in the case $p=\infty$. 

\section{Main results}
\label{s1}
In this section, we present the model studied in the paper and formulate our results. We begin with the property of exponential mixing in the Kantorovich--Wasserstein distance for a class of discrete-time Markov processes. This type of results are by now well known in the literature for the case of the 2D Navier--Stokes system, and Theorem~\ref{t1.1} presented below is essentially a reformulation of earlier achievements in the abstract framework applicable to other PDE's with random perturbations. We next turn to our main result on large deviations from the stationary distribution for the occupation measures. As a consequence, we obtain the LDP for a class of observables with moderate growth at infinity. 

\subsection{Description of the model}
\label{s1.1}
Let~$H$ be a separable Hilbert space with a norm~$\|\cdot\|$ and let $S:H\to H$ be a continuous mapping. We consider the random dynamical system
\begin{equation} \label{1.1}
u_k=S(u_{k-1})+\eta_k, \quad k\ge1,
\end{equation}
where~$\{\eta_k,k\ge1\}$ is a sequence of i.i.d.\ random variables in~$H$. Equation~\eqref{1.1} is supplemented with the initial condition
\begin{equation} \label{1.2}
u_0=u\in H. 
\end{equation}
We denote by~$(u_k,\IP_u)$ the family of Markov processes associated with~\eqref{1.1}, \eqref{1.2}, by~$P_k(u,\Gamma)$ its transition function at time~$k$, and by $\PPPP_k:C_b(H)\to C_b(H)$ and $\PPPP_k^*:\PP(H)\to\PP(H)$ the corresponding Markov semigroups. In what follows, we assume that~$S$ satisfies the two conditions below.

\medskip
{\bf (A) Dissipativity.} 
{\sl There are positive numbers $\alpha$, $\beta$, $C$, and $q<1$ and a continuous function $\varPhi:H\to\R_+$ such that 
\begin{gather} 
1+\|u\|^\alpha\le \varPhi(u)\le C(1+\|u\|)^\beta\quad\mbox{for $u\in H$}, \label{1.03}\\
\varPhi(S(u)+v)\le q\,\varPhi(u)+C\,\varPhi(v)\quad\mbox{for $u,v\in H$}. \label{1.04}
\end{gather}}

The following hypothesis implies, in particular, that~$S$ is compact and gives some quantitative information about the possibility of approximation of the elements in the image of~$S$ by finite-dimensional subspaces. 

\medskip
{\bf (B) Compactness.} 
{\sl There is a continuous function $\pppp:H\to\R_+$ bounded on any ball, an increasing  sequence~$\{\gamma_N\}\subset(0,+\infty)$ going to~$+\infty$, and an increasing sequence of finite-dimensio\-nal subspaces~$H_N$ whose union is dense in~$H$ such that $H_0=\{0\}$ and, for any $u,v\in H$ and $N\ge0$, we have
\begin{equation} \label{1.4}
\bigl\|(I-{\mathsf P}_N)\bigl(S(u)-S(v)\bigr)\bigr\|
\le\gamma_N^{-1}\exp\bigl\{\pppp(u)+\pppp(v)\bigr\}\|u-v\|,
\end{equation}
where $I$ is the identity operator in~$H$ and ${\mathsf P}_N:H\to H$ denotes the orthogonal projection onto~$H_N$.}

\smallskip
Let us note that if this condition is satisfied for one sequence~$\{H_N\}$, then it holds for any other increasing sequence~$\{H_N'\}$ of finite-dimensional subspaces, and the function~$\pppp$ entering~\eqref{1.4} can be taken to be the same. Indeed, denoting by~${\mathsf P}_N'$ the orthogonal projection to~$H_N'$, we note that
$$
\bigl\|(I-{\mathsf P}_N'){\mathsf P}_Mw\bigr\|\le\e_{MN}\|w\|
\quad\mbox{for any $w\in H$},
$$
where $\e_{MN}\to0$ as $N\to\infty$ for any fixed~$M$. Setting $w=S(u)-S(v)$ and using inequality~\eqref{1.4} with $N=0$ and $N=M$, we derive
\begin{align*}
\bigl\|(I-{\mathsf P}_N')\bigl(S(u)-S(v)\bigr)\bigr\|
&\le \bigl\|(I-{\mathsf P}_N'){\mathsf P}_Mw\bigr\|
+ \bigl\|(I-{\mathsf P}_N')(I-{\mathsf P}_M)w\bigr\|\notag\\
&\le \e_{MN}\|w\|+\bigl\|(I-{\mathsf P}_M)w\bigr\|\notag\\
&\le \bigl(\gamma_0^{-1}\e_{MN}+\gamma_M^{-1}\bigr)
\exp\bigl\{\pppp(u)+\pppp(v)\bigr\}\|u-v\|. 
\end{align*}
The factor in the brackets on the right-hand side of this inequality can be made arbitrarily small by an appropriate choice of~$M$ and~$N$. We thus obtain inequality~\eqref{1.4} with~${\mathsf P}_N$ replaced by ${\mathsf P}_N'$ and a different sequence~$\{\gamma_N\}$. 

\smallskip
We now formulate the hypothesis imposed on the random variables~$\{\eta_k\}$.

\medskip
{\bf (C) Structure of the noise.}
{\sl The random variables~$\eta_k$ can be written as
\begin{equation} \label{1.5}
\eta_k=\sum_{j=1}^\infty b_j \xi_{jk}e_j,
\end{equation}
where $\{e_j\}$ is an orthonormal basis in~$H$, $\{\xi_{jk}\}$ are independent random variables whose laws possess $C^1$-smooth positive densities~$\rho_j$ against the Lebesgue measure such that $\var(\rho_j)\le1$ for all~$j\ge1$, and~$\{b_j\}\subset\R_+$ are numbers satisfying the condition
\begin{equation} \label{1.6}
\BBBB:=\sum_{j=1}^\infty \gamma_{j-1}|b_j|<\infty.
\end{equation}
Moreover, there is $\delta>0$ such that 
\begin{equation} \label{1.8}
\mmmm_\delta(\LL):=\int_He^{\delta (\varPhi(u)+\pppp(u))}\,\LL(\dd u)<\infty,
\end{equation}
where $\LL=\DD(\eta_1)$.
}

\smallskip
Let us note that if Condition~(C) is satisfied, then the random variables~$\eta_k$ are concentrated on a space compactly embedded into~$H$. Indeed, in view of the remark following Condition~(B), without loss of generality we can assume that the subspace~$H_N$ entering Condition~(B) coincides with the vector span of $e_1,\dots,e_N$, where~$\{e_j\}$ is the orthonormal basis in~\eqref{1.5}. Define~$U\subset H$ as the space of vectors $u\in H$ such that
$$
\|{\mathsf Q}_Nu\|\le C\gamma_N^{-1}\quad\mbox{for any $N\ge0$},
$$
where ${\mathsf Q}_N=I-{\mathsf P}_N$, and $C>0$ is a number not depending on~$N$. We endow~$U$ with the norm 
\begin{equation} \label{1.007}
\|u\|_U=\sup_{N\ge0}\bigl(\gamma_N\|{\mathsf Q}_N u\|\bigr).
\end{equation}
It is straightforward to check that~$U$ is a Banach space compactly embedded into~$H$ and that $\|e_j\|_U=\gamma_{j-1}$ for any $j\ge1$. It follows from~\eqref{1.8}, \eqref{1.5}, and~\eqref{1.6} that 
\begin{equation} \label{1.08}
\E\|\eta_k\|_U\le C\BBBB.
\end{equation}

We also note that if Condition~(B) is satisfied for some sequence~$\{\gamma_N\}$, then it holds also for any other sequence of positive numbers~$\{\gamma_N'\}$ such that $\gamma_N'\le C\gamma_N$ for some~$C>0$. Thus, condition~\eqref{1.6} imposed on the sequence~$\{b_j\}$ can be relaxed to the following one: there is a sequence of positive numbers~$\{\gamma_j\}$ going to~$+\infty$ such that series~\eqref{1.6} converges. 

\subsection{Exponential mixing}
\label{s1.2}
To formulate our result on exponential mixing, we shall need the concept of a stabilisable functional. Let $\pppp:H\to\R_+$ be a  continuous function. We shall say that~$\pppp$ is {\it stabilisable\/} for the Markov family~$(u_k,\IP_u)$ if there is an increasing continuous function $Q:\R_+\to\R_+$ and positive numbers~$c$ and~$\delta$ such that
\begin{equation} \label{1.7}
\E_u\exp\bigl\{\delta\bigl(\pppp(u_1)+\cdots+\pppp(u_{k})\bigr)\bigr\}\le Q(\|u\|)e^{c k}\quad\mbox{for $k\ge1$, $u\in H$}. 
\end{equation}
The following theorem is a generalisation of some earlier results on mixing properties of the Navier--Stokes system proved in~\cite{KS-mpag2001,shirikyan-jmfm2004}; see also~\cite{FM-1995,KS-cmp2000,EMS-2001,BKL-2002,HM-2006} and the references in~\cite{KS-book,debussche-2013} for other related results. 

\begin{theorem} \label{t1.1}
Let Conditions~{\rm(A)}, {\rm(B)}, and~{\rm(C)} stated in Section~\ref{s1.1} be fulfilled. Assume, in addition, that $S(0)\in U$, and the functional~$\pppp$  entering~{\rm(B)} is stabilisable, with a function~$Q$ satisfying the inequality 
\begin{equation} \label{1.09}
Q(\|u\|)\le e^{\rho\, \varPhi(u)}\quad\mbox{for all $u\in H$}, 
\end{equation}
where $\rho>0$ does not depend on~$u$. Then there is an integer $N\ge1$ such that the Markov family~$(u_k,\IP_u)$ associated with~\eqref{1.1} has a unique stationary measure $\mu\in\PP(H)$, provided that 
\begin{equation} \label{1.9}
b_j\ne0\quad\mbox{for $j=1,\dots,N$}. 
\end{equation}
Moreover,  there are positive constants~$\gamma$ and~$C$ such that
\begin{equation} \label{1.11}
\|\PPPP_k^*\lambda-\mu\|_L^*
\le C\,e^{-\gamma k}\int_H \varPhi(u)\,\lambda(\dd u)
\quad\mbox{for any $\lambda\in\PP(H)$, $k\ge0$}. 
\end{equation}
\end{theorem}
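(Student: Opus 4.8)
The plan is to establish Theorem~\ref{t1.1} via the standard coupling/Foias--Prodi strategy, adapted to the unbounded setting through the weight function~$\varPhi$. The argument splits into two parts: a Lyapunov-type recurrence estimate and a contraction estimate on a bounded set.

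\medskip
\textbf{Step 1: Exponential recurrence.} First I would iterate the dissipativity inequality~\eqref{1.04}. Applying it with $v=\eta_k$ and taking expectations, we get $\E_u\varPhi(u_k)\le q\,\E_u\varPhi(u_{k-1})+C\,\E\varPhi(\eta_1)$, and the last term is finite by~\eqref{1.8} and~\eqref{1.03}. Iterating yields $\E_u\varPhi(u_k)\le q^k\varPhi(u)+C'$ for all $k\ge0$, with $C'$ independent of $u$. This immediately shows that $B_H(R_0)$ is recurrent for $R_0$ large enough: by Chebyshev and~\eqref{1.03}, $\IP_u\{u_k\notin B_H(R_0)\}$ is small once $\varPhi(R_0)$-ish is large compared to $q^k\varPhi(u)+C'$. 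I would also record the exponential moment bound: from~\eqref{1.8} one can choose $\delta'>0$ small so that $\E_u e^{\delta'\varPhi(u_k)}\le e^{-\gamma' k}e^{\delta'\varPhi(u)}+C''$, which controls the weighted integral $\int\varPhi\,d(\PPPP_k^*\lambda)$ appearing on the right-hand side of~\eqref{1.11}. The stabilisability of $\pppp$ (hypothesis~\eqref{1.7}) together with~\eqref{1.09} provides the analogous control on the exponential moments of $\pppp(u_1)+\cdots+\pppp(u_k)$, which will be needed to control the bad event in the coupling.

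\medskip
\textbf{Step 2: Contraction on a ball via coupling.} On the ball $B=B_H(R_0)$ I would construct a coupling of two trajectories $(u_k)$, $(u_k')$ started at $u,u'\in B$. Using Condition~(B), the difference $S(u_{k-1})-S(u_{k-1}')$ is nearly finite-dimensional: its projection onto $H_N^\perp$ is exponentially small in $N$ times $\exp\{\pppp+\pppp'\}$. On the finite-dimensional part $H_N$, since $b_j\ne0$ for $j\le N$ and the densities $\rho_j$ are positive with bounded variation, one can apply a maximal coupling of the kicks $\eta_k$, $\eta_k'$ (the Girsanov-type/Kantorovich coupling lemma, presumably in the appendix) so that with probability close to $1$ the two trajectories become equal on $H_N$ at the next step, while on $H_N^\perp$ they contract by the factor $\gamma_N^{-1}\exp\{\pppp+\pppp'\}$. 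Choosing $N$ large (which fixes the number $N$ in the theorem statement and condition~\eqref{1.9}), this factor is less than one on the set where $\pppp$ stays bounded. The Foias--Prodi-type estimate then gives, on a "good" event $G_k$ of probability $\ge 1-\kappa$, that $\|u_k-u_k'\|\le C\lambda^k\|u-u'\|$ for some $\lambda<1$; on the complement $G_k^c$ one controls things using the exponential moments from Step~1 and~\eqref{1.7}, \eqref{1.09}. Combining, one obtains a squeezing estimate $\E_{u,u'}(\|u_k-u_k'\|\wedge 1)\le q_1^k$ uniformly for $u,u'\in B$, with $q_1<1$, after passing to a suitable subsequence/iterate.

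\medskip
\textbf{Step 3: Assembling the exponential mixing.} Existence of a stationary measure $\mu$ follows from the recurrence estimate of Step~1 and the Bogolyubov--Krylov argument, using compactness of the embedding $U\hookrightarrow H$ together with $S(0)\in U$ and~\eqref{1.08} (so that $S(u_{k-1})+\eta_k$ lands in a compact set on average). Uniqueness and the rate~\eqref{1.11} follow by the classical coupling-with-return scheme: one alternates excursions until both copies enter $B$ (geometrically fast return times by Step~1), then runs the contracting coupling of Step~2, restarting on failure; the geometric series in the return times and the failure probability combine to give exponential convergence in $\|\cdot\|_L^*$, with the $\int\varPhi\,d\lambda$ prefactor coming from the recurrence bound on how long it takes the $\lambda$-distributed copy to reach $B$. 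This is essentially the argument of~\cite{KS-mpag2001,shirikyan-jmfm2004} transcribed into the abstract hypotheses (A)--(C), so I would present it by quoting those references for the parts that go through verbatim and detailing only the points where the unbounded kicks and the weight $\varPhi$ enter.

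\medskip
\textbf{Main obstacle.} The delicate point is controlling the contribution of the "bad" events $G_k^c$ in the coupling: because $\pppp$ and $\|\eta_k\|$ are unbounded, one cannot simply bound $\|u_k-u_k'\|$ by a constant there. The resolution is to trade the polynomial/exponential growth of $\|u_k-u_k'\|$ on $G_k^c$ against the smallness of $\IP(G_k^c)$ and the exponential moment bounds from~\eqref{1.7}--\eqref{1.09} and~\eqref{1.8}, via Hölder's inequality with carefully chosen exponents; this is exactly where hypothesis~\eqref{1.09} (the function $Q$ dominated by $e^{\rho\varPhi}$) is used to keep the product summable. Getting these exponents to close up — i.e.\ choosing $\delta$ in~\eqref{1.8} and~\eqref{1.7} small enough relative to $\rho$ and the contraction rate — is the technical heart of the proof.
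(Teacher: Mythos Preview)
Your proposal is correct and matches the paper's approach: the paper itself omits the proof entirely, stating only that it is a straightforward extension of the scheme in~\cite{shirikyan-jmfm2004} and Chapter~3 of~\cite{KS-book} to the abstract setting, and your outline is precisely that scheme (Lyapunov recurrence, Foias--Prodi coupling on a ball using Condition~(B) and the maximal coupling lemma, then the return-and-restart assembly). Your identification of the main technical issue --- controlling the bad event via the exponential moment bounds~\eqref{1.7}--\eqref{1.09} and~\eqref{1.8} --- is exactly the point where the unbounded-kick case differs from the bounded one treated earlier in~\cite{KS-cmp2000}.
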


The proof of this result is an extension of  the scheme used in~\cite{shirikyan-jmfm2004} and Chapter~3 of~\cite{KS-book} for the case of the Navier--Stokes system perturbed by an unbounded kick force to the abstract setting of this paper.  This extension is straightforward  and we  omit  the details.

\subsection{Large deviations}
\label{s1.3}
To formulate our result on LDP, we first recall some definitions from the theory of large deviations. Let~$X$ be a Polish space and let $\{\zeta_k\}$ be a sequence of random probability measures on~$X$ defined on a measurable space $(\Omega,\FF)$. In other words, for any $k\ge1$, we are given  a measurable mapping $\zeta_k:\Omega\to\PP(X)$. Recall that a mapping $I : \PP(X) \to [0, +\ty]$ is called a {\it rate function\/} if it is lower semicontinuous, and a rate function~$I$ is said to be {\it good\/} if the set~$\{\sigma\in\PP(X) : I(\sigma) \le \alpha \}$ is compact for any~$\alpha \in[0, +\infty)$. For a  set~$\Gamma\subset\PP(X)$ we write~$I(\Gamma)=\inf_{\sigma\in\Gamma}I(\sigma)$. 

\begin{definition}
Suppose that a family of probability measures $\{\IP_\lambda, \lambda\in\Lambda\}$ is given on the measurable space $(\Omega,\FF)$. We say that the sequence $\{\zeta_k\}$ {\it satisfies the LDP with a good rate function~$I$, uniformly with respect to~$\lambda\in\Lambda$\/}, if the following two properties hold. 
\begin{description}
\item[Upper bound.] 
For any closed subset~$F\subset\PP(X)$, we have
$$
\limsup_{k\to\infty} \frac1k\log \sup_{\lambda\in\Lambda} \pP_\lambda\{\zeta_k\in F\}\le -I(F).
$$
\item[Lower bound.] 
For any open subset~$G\subset\PP(X)$, we have
$$
\liminf_{k\to\infty} \frac1k\log \inf_{\lambda\in\Lambda}\pP_\lambda\{\zeta_k\in G\}\ge -I(G).
$$
\end{description}
\end{definition}   

We now turn to the Markov process~$(u_k,\IP_u)$ associated with~\eqref{1.1}. Given a probability measure $\lambda\in\PP(H)$, we denote by~$\IP_\lambda$ the induced  measure
$$
\IP_\lambda(\Gamma)=\int_X\IP_u(\Gamma)\lambda(\dd u),\quad \Gamma\in\FF,
$$
 on~$(\Omega,\FF)$. For positive numbers $\delta$ and $M$, let~$\Lambda(\delta,M)$ be the set of measures $\lambda\in\PP(H)$ that satisfy the inequality
$$
\int_He^{\delta\varPhi(v)}\lambda(\dd v)\le M. 
$$
For any integer~$\ell\ge1$, we set $\uu^\ell_n=[u_n,\ldots,u_{n+\ell-1}]$ and consider the family of {\it occupation measures\/} 
 \begin{equation} \label{1.18}
\zeta_k^\ell=\frac1k\sum_{n=0}^{k-1}\delta_{\uu_n^\ell},\quad k\ge1,
\end{equation}
defined on the family of probability spaces $(\Omega,\FF,\IP_\lambda)$, where $\lambda\in\PP(H)$. Thus, for any~$\lambda$, relation~\eqref{1.18} defines a sequence of random probability measures on~$H^\ell$. Recall that the space~$U$ was introduced at the end of Section~\ref{s1.1}. The following theorem is the main result of this paper. 

\begin{theorem} \label{t1.2}
In addition to the hypotheses of Theorem~\ref{t1.1}, let us assume the numbers~$b_j$ are all non-zero. Then, for any $\delta>0$ and $M>0$, the sequence~$\{\zeta_k^\ell,k\ge1\}$ defined on $(\Omega,\FF,\IP_\lambda)$ satisfies the LDP, uniformly with respect to $\lambda\in\Lambda(\delta,M)$, with a good rate function~$I_\ell: \PP(H^\ell)\to [0,+\ty]$ not depending on~$\lambda$. Moreover, $I_\ell$ is given by
$$
I_\ell(\sigma)=\sup_{V\in C_b(H^\ell)}  \bigl(\lag V,\sigma\rag-Q_\ell(V) \bigr),
\quad \sigma\in\PP(H^\ell),
$$
where~$Q_\ell:C_b(H^\ell)\to\R$ is a~$1$-Lipschitz convex function such that~$Q_\ell(C)=C$ for any~$C\in\R$. 
\end{theorem}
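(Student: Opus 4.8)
The plan is to apply Kifer's criterion in the form adapted to an unbounded phase space (the extension announced for Section~\ref{s4}) to the sequence of occupation measures $\{\zeta_k^\ell\}$. To do this, I would first reduce to the case $\ell=1$: the process $\uu_n^\ell=[u_n,\dots,u_{n+\ell-1}]$ is itself a Markov chain on $H^\ell$, it satisfies analogues of Conditions~(A), (B), (C) with obvious modifications of the functions $\varPhi$, $\pppp$ (e.g.\ $\varPhi_\ell(u_1,\dots,u_\ell)=\varPhi(u_1)+\cdots+\varPhi(u_\ell)$), and the hypotheses of Theorems~\ref{t1.1}, \ref{t1.2} are inherited; so it suffices to treat $\zeta_k=\zeta_k^1$. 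The abstract Kifer criterion requires: (i) exponential tightness of $\{\zeta_k\}$ uniformly over $\lambda\in\Lambda(\delta,M)$; (ii) existence of the limit (the \emph{pressure})
$$
Q(V)=\lim_{k\to\infty}\frac1k\log\E_\lambda\exp\Bigl(\sum_{n=0}^{k-1}V(u_n)\Bigr)
$$
for every $V\in C_b(H)$, independent of $\lambda\in\Lambda(\delta,M)$; and (iii) a uniqueness property for the equilibrium states of $Q$ — namely that for each $V$ the supremum defining $I(\sigma)=\sup_V(\lag V,\sigma\rag-Q(V))$ is attained, in the relevant sense, at a single measure, which upgrades the abstract weak-type bound to the full LDP with a good rate function. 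The properties of $Q_\ell$ asserted in the theorem ($1$-Lipschitz, convex, $Q_\ell(C)=C$) are automatic consequences of the definition once the limit is shown to exist: $1$-Lipschitzness and the normalisation follow from $|V-W|\le\|V-W\|_\infty$ and $V+C$ shifting the exponent by $kC$, and convexity follows from H\"older's inequality applied under the expectation.

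The core analytic input is the large-time behaviour of the generalised Markov semigroup $\PPPP_k^V$ from~\eqref{0.8}. Here I would invoke the general result on generalised Markov semigroups announced for Section~\ref{s5}: under a uniform irreducibility property, a uniform Feller property, and the growth condition~\eqref{0.9}, the family $\{e^{-k\lambda_V}\PPPP_k^V\}$ converges, and one obtains a positive eigenfunction $h_V$ and eigenmeasure $\mu_V$ with $\PPPP_k^V h_V=e^{k\lambda_V}h_V$, $(\PPPP_k^V)^*\mu_V=e^{k\lambda_V}\mu_V$, together with the convergence $e^{-k\lambda_V}\PPPP_k^V f\to \lag f,\mu_V\rag h_V$ in a suitable weighted space. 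Setting $Q(V)=\lambda_V$ then gives (ii), and the exponential stability statement gives the uniformity over initial measures $\lambda$ with $\int e^{\delta\varPhi}\dd\lambda\le M$ (the moment bound~\eqref{1.8} on the noise and the Lyapunov structure~\eqref{1.04} control how the $\varPhi$-weight propagates). Condition~\eqref{1.09} on $Q$, the non-degeneracy $b_j\ne0$ for all $j$, and the coupling construction underlying Theorem~\ref{t1.1} are exactly what is needed to verify uniform irreducibility and the uniform Feller property for the perturbed semigroup; the growth condition~\eqref{0.9} is verified using the hyper-exponential recurrence of the chain, which itself comes from~\eqref{1.04} and~\eqref{1.8}.

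The steps, in order, would be: (1) state the $\ell$-fold product reduction and check Conditions~(A)--(C) for the chain $\{\uu_n^\ell\}$; (2) establish uniform exponential tightness of $\{\zeta_k^\ell\}$ over $\Lambda(\delta,M)$, using the Lyapunov function $\varPhi_\ell$ and the compact embedding $U\hookrightarrow H$ together with~\eqref{1.08} — the sublevel sets $\{\varPhi_\ell\le R\}$ are precompact and $\frac1k\sum \varPhi_\ell(\uu_n^\ell)$ has exponential moments; (3) verify, for the semigroup $\PPPP_k^V$ on $H^\ell$, the uniform irreducibility, uniform Feller, and growth~\eqref{0.9} hypotheses of the Section~\ref{s5} theorem — this is the substantive part and reuses the coupling and recurrence machinery behind Theorem~\ref{t1.1}; (4) conclude existence of $Q_\ell(V)$ and its asserted algebraic properties; (5) feed $Q_\ell$ and the uniqueness of equilibrium state into the unbounded Kifer criterion of Section~\ref{s4} to obtain the uniform LDP with good rate function $I_\ell$.

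I expect the main obstacle to be step~(3), specifically the verification of the growth condition~\eqref{0.9} — this is the genuinely new difficulty caused by unboundedness, and it is not implied by exponential tightness alone (as emphasised in the introduction). The point is that $\PPPP_k^V\wwww_m$ can a priori grow much faster than $\PPPP_k^V\mathbf1$ on large balls because the exponential weight $\exp(V(u_1)+\cdots+V(u_k))$ interacts with excursions of $\|u_n\|$ to infinity; controlling this requires quantitative hyper-exponential recurrence estimates showing that, uniformly in the starting point, the chain returns to a fixed ball $B_H(R_0)$ with probability exponentially close to $1$ and with exponentially small tails on the return time, so that the weighted mass accumulated outside the ball is negligible compared to the mass accumulated inside. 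The Lyapunov inequality~\eqref{1.04}, the exponential moment bound~\eqref{1.8} on the noise, and the relation~\eqref{1.09} tying $Q$ to $\varPhi$ are precisely calibrated to make this work, but assembling them into~\eqref{0.9} uniformly in $k$ and in the (bounded) perturbation $V$ is the delicate technical heart of the argument. The uniform Feller property in step~(3) is also non-trivial but follows the now-standard coupling route, so it is less of a bottleneck than~\eqref{0.9}.
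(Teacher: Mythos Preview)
Your overall architecture matches the paper's: reduce to the occupation measures of the lifted $\ell$-process on $H^\ell$, apply the unbounded-phase-space Kifer criterion of Section~\ref{s4}, and feed it with the spectral-gap/multiplicative-ergodic information from the Section~\ref{s5} theorem on generalised Markov semigroups, the latter being verified via hyper-exponential recurrence (growth condition) and coupling (uniform Feller). So the strategy is right. But two of your stated reductions are wrong as written and would stall you if taken literally.

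First, the $\ell$-process does \emph{not} inherit Conditions~(A)--(C). In the RDS~\eqref{1.015} the noise is $\eeta_k=[0,\dots,0,\eta_k]$, which is supported on a proper subspace of $H^\ell$ and has no density there, so~(C) fails outright; likewise $\Ss$ is a pure shift on the first $\ell-1$ components with no smoothing, so~(B) fails. The paper does not attempt this reduction. Instead it works directly with $\PPPP_k^V$ on $H^\ell$ and verifies the hypotheses of Theorem~\ref{T:3.1} by hand, systematically using the relation~\eqref{1.017} to push every estimate back to the original chain on~$H$ after $\ell$ steps (so e.g.\ uniform irreducibility is obtained only after $\ge\ell+1$ iterations, and the kernel $P_{\ell+1}^V(\uuu,\cdot)$, not $P_1^V$, is shown to be concentrated on $U^\ell$). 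Your step~(3) already says you will verify those hypotheses directly, so this is more a warning not to lean on the false ``inheritance'' in steps~(1)--(2).

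Second, in your exponential-tightness step the sublevel sets $\{\varPhi_\ell\le R\}$ are \emph{not} precompact in $H^\ell$: $\varPhi$ only controls the $H$-norm, and balls in~$H$ are not compact. The paper (Proposition~\ref{p6.2}) uses instead the function $\varPsi(u)=\gamma\log(1+\|u\|_U)$, whose level sets are compact by $U\Subset H$, and shows via~\eqref{1.4} and stabilisability of~$\pppp$ that $\sum_{n=2}^k\varPsi(u_n)$ has the needed exponential moments. Finally, one point you omit: the limit $Q_\ell(V)$ and uniqueness of the equilibrium state are first obtained only for $V$ in the explicit subspace $\VV$ of cylindrical $C_b^1$ functions~\eqref{6.51}; the pressure is then extended to all of $C_b(H^\ell)$ by buc-approximation (Section~\ref{s8.6}), while uniqueness is needed only on~$\VV$ since the Kifer criterion (Theorem~\ref{T:2.3}) requires it merely on a subspace whose restrictions to compacts are dense. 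The uniform Feller proof (Section~\ref{s7}) also genuinely uses that $V\in\VV$ depends on finitely many modes, so this restriction is not cosmetic.
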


The scheme of the proof of this theorem is outlined in Section~\ref{s1.5}, the details are presented in Sections~\ref{s4}--\ref{s8}, and applications to PDE's with random perturbation are discussed in Section~\ref{s2}. Here we derive two corollaries from Theorem~\ref{t1.2}. The first of them concerns the LDP for the time averages of unbounded functionals. 

\begin{corollary} \label{c1.3}
Under the hypotheses of Theorem~\ref{t1.2}, let $f:H^\ell\to \R$ be a continuous functional bounded on any ball such that 
\begin{equation}\label{1.19}
\frac{|f(v^1,\dots,v^\ell)|}{\pppp(v^1)+\cdots+\pppp(v^\ell)} \to 0 \quad \text{as $\|v^1\|+\cdots+\|v^\ell\|\to\ty$.}
\end{equation}
Then, for any $\delta>0$ and $M > 0$, the $\IP_\lambda$-laws of the random variables
$$
\zeta_k^{\ell,f}=\frac1k\sum_{n=0}^{k-1} {f(\uu_n^\ell)},\quad k\ge1,
$$ 
satisfy the LDP, uniformly with respect to $\lambda\in\Lambda(\delta,M)$, with a good rate function~$I_\ell^{ f}: \R\to[0,+\infty]$ not depending on~$\lambda$. Moreover,  $I_\ell^{f}$ is given by
$$
I_\ell^{f}(\sigma)= \inf\{I_{\ell}(\nu): \nu\in \PP(H^\ell),  \lag f,\nu\rag=\sigma\},
\quad \sigma\in \R,
$$
where the infimum over an empty set is equal to $+\ty$.
\end{corollary}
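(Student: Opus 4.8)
The plan is to deduce the LDP for the scalar averages $\zeta_k^{\ell,f}$ from the level-2 LDP for the occupation measures $\zeta_k^\ell$ established in Theorem~\ref{t1.2} via the contraction principle. The difficulty is that the natural map $\sigma\mapsto\langle f,\sigma\rangle$ is not continuous on $\PP(H^\ell)$ (with its weak topology) when $f$ is unbounded, so the standard contraction principle does not apply directly; condition~\eqref{1.19} together with the exponential moment bounds built into the hypotheses is exactly what bridges this gap. First I would record, from the stabilisability of $\pppp$ (inequality~\eqref{1.7}) and the assumed exponential moment bound \eqref{1.8} on the noise, that there is $\delta_0>0$ such that $\sup_\lambda\E_\lambda\exp\{\delta_0(\pppp(u_n)+\cdots+\pppp(u_{n+\ell-1}))\}$ grows at most exponentially in time uniformly over $\lambda\in\Lambda(\delta,M)$; applied to the empirical mean this gives, for the function $g(v^1,\dots,v^\ell)=\pppp(v^1)+\cdots+\pppp(v^\ell)$, the exponential tightness estimate
\begin{equation*}
\limsup_{k\to\infty}\frac1k\log\sup_{\lambda\in\Lambda(\delta,M)}\IP_\lambda\bigl\{\langle g,\zeta_k^\ell\rangle\ge R\bigr\}\longrightarrow-\infty\quad\text{as }R\to\infty.
\end{equation*}

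Next I would approximate $f$ by bounded truncations: set $f_R=(f\wedge R)\vee(-R)$ (or, better, $f_R=f\chi_{\{g\le R\}}$ truncated to be continuous and bounded). Each $f_R$ is bounded and continuous, so $\sigma\mapsto\langle f_R,\sigma\rangle$ is weakly continuous on $\PP(H^\ell)$, and the contraction principle applied to Theorem~\ref{t1.2} yields the LDP for $\langle f_R,\zeta_k^\ell\rangle$ with rate function $\sigma\mapsto\inf\{I_\ell(\nu):\langle f_R,\nu\rangle=\sigma\}$, uniformly over $\lambda\in\Lambda(\delta,M)$. By condition~\eqref{1.19}, on the set $\{g\le R\}$ we have $|f-f_R|\le o(R)$ pointwise in a way controlled by $g$; more precisely, for any $\varepsilon>0$ there is $R_\varepsilon$ with $|f(v)-f_R(v)|\le\varepsilon\,g(v)$ for all $v$ and all $R\ge R_\varepsilon$. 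Hence
\begin{equation*}
\bigl|\langle f,\zeta_k^\ell\rangle-\langle f_R,\zeta_k^\ell\rangle\bigr|\le\varepsilon\,\langle g,\zeta_k^\ell\rangle,
\end{equation*}
and combining this with the exponential tightness bound above shows that $\langle f_R,\zeta_k^\ell\rangle$ is an exponentially good approximation of $\zeta_k^{\ell,f}=\langle f,\zeta_k^\ell\rangle$, uniformly in $\lambda\in\Lambda(\delta,M)$, in the sense required to transfer the LDP (cf. the approximation lemma for exponentially equivalent / exponentially good families, e.g. Theorem~4.2.16 in Dembo--Zeitouni, whose proof goes through verbatim in the uniform setting).

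Finally I would identify the rate function. The general principle for exponentially good approximations gives $I_\ell^f(\sigma)=\sup_{\varepsilon>0}\liminf_{R\to\infty}\inf_{|\sigma'-\sigma|\le\varepsilon}\inf\{I_\ell(\nu):\langle f_R,\nu\rangle=\sigma'\}$; I would show this equals $\inf\{I_\ell(\nu):\langle f,\nu\rangle=\sigma\}$. The inequality $\le$ is immediate since any $\nu$ with $\langle f,\nu\rangle=\sigma$ and $I_\ell(\nu)<\infty$ necessarily has $\langle g,\nu\rangle<\infty$ (because $I_\ell(\nu)<\infty$ forces $\nu$ to lie in the "low-energy" region where $g$ is integrable — this uses that the pressure/Legendre structure of $I_\ell$ already encodes the exponential moment bound on $g$), whence $\langle f_R,\nu\rangle\to\langle f,\nu\rangle$. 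For $\ge$, one uses goodness of $I_\ell$: a minimising sequence $\nu_R$ with $\langle f_R,\nu_R\rangle\to\sigma$ and $I_\ell(\nu_R)$ bounded has a weak limit point $\nu$ with $I_\ell(\nu)\le\liminf I_\ell(\nu_R)$, and uniform integrability of $f$ against the compact level set (again via the $g$-moment control) gives $\langle f,\nu\rangle=\sigma$. The convention that the infimum over the empty set is $+\infty$ then reads off correctly, since $\langle f,\nu\rangle$ can fail to attain certain values only when no finite-rate $\nu$ produces them. The main obstacle is this last identification of the rate function together with the justification that finite-$I_\ell$ measures have finite $g$-moment; the approximation/contraction machinery itself is routine once the exponential tightness estimate is in hand.
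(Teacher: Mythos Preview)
Your proposal is correct and follows essentially the same approach the paper has in mind: the paper explicitly declines to give a proof, saying only that ``similar contraction-principle-type results were established earlier in~\cite{gourcy-2007b,JNPS-2013}'' and that the uniformity in $\lambda$ carries over once the level-2 LDP of Theorem~\ref{t1.2} is uniform. Your sketch---exponential tightness for $\langle g,\zeta_k^\ell\rangle$ from the stabilisability estimate~\eqref{1.7}, exponentially good approximation of $f$ by bounded truncations via~\eqref{1.19}, contraction principle for each truncation, and identification of the rate function using that finite-$I_\ell$ measures have finite $g$-moment---is exactly this contraction-principle argument spelled out.
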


We shall not give a proof of this corollary, because similar contraction-principle-type results were established earlier in~\cite{gourcy-2007b,JNPS-2013}. The only difference is that here  we claim that the LDP holds uniformly with respect to the initial measure~$\lambda$. However, the proofs in the above-mentioned works give also the uniformity, provided that the LDP for the occupation measures holds uniformly. 

To formulate the second corollary, we denote by~$\HHH=H^{\Z_+}$ the direct product of countably many copies of~$H$ and, given a solution~$\{u_k\}$ for~\eqref{1.1}, define the occupation measure for full trajectories by the relation
$$
\zeta_k^\infty=\frac1k\sum_{n=0}^{k-1}\delta_{\uuu_n^\infty},\quad k\ge1,
$$
where we set~$\uuu_n^\infty=[u_k,k\ge n]$. The following result is an immediate consequence of Theorem~\ref{t1.2} and the Dawson--G\"artner theorem (see Section~4.6 in~\cite{ADOZ00}). Its proof can be carried out in exactly the same way as for the case of bounded kicks (see Section~1.5 in~\cite{JNPS-2012}).

\begin{corollary} \label{c1.4}
Let the conditions of Theorem~\ref{t1.2} be fulfilled.  Then, for any $\delta>0$ and~$M>0$, the family~$\{\zeta_k^\infty,k\ge1\}$ satisfies the LDP, uniformly in $\lambda\in\Lambda(\delta,M)$, with a good rate function~$\III:\PP(\HHH)\to[0,+\infty]$ not depending on~$\lambda$.
\end{corollary}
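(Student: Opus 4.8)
The plan is to derive Corollary~\ref{c1.4} from Theorem~\ref{t1.2} by an appeal to the Dawson--G\"artner projective limit theorem, exactly as in the bounded-kick setting of~\cite{JNPS-2012}. The point is that the occupation measures for full trajectories can be recovered as a projective limit of the finite-window occupation measures~$\zeta_k^\ell$ from~\eqref{1.18}, and the LDP passes to the limit.

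First I would set up the projective system. For integers $\ell'\ge\ell\ge1$ let $\pi_{\ell}:H^{\Z_+}\to H^\ell$ be the coordinate projection onto the first~$\ell$ factors, and let $p_{\ell',\ell}:H^{\ell'}\to H^\ell$ be the analogous truncation; these are continuous, and $\HHH=H^{\Z_+}$ together with the maps $\pi_\ell$ is the projective limit of the spaces $H^\ell$. The pushforward $(\pi_\ell)_*:\PP(\HHH)\to\PP(H^\ell)$ is continuous for the weak topologies, and by the shift-invariance structure of the occupation measures one checks the compatibility identity $(p_{\ell',\ell})_*\zeta_k^{\ell'}=\zeta_k^\ell$ up to a boundary error of order $O((\ell'-\ell)/k)$, which is negligible on the exponential scale. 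Hence $\{\zeta_k^\infty\}$ is, in the sense required by Dawson--G\"artner, the projective limit of the families $\{\zeta_k^\ell\}$, $\ell\ge1$.

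Next I would apply Theorem~\ref{t1.2}: for each fixed $\ell$, the family $\{\zeta_k^\ell\}$ satisfies the LDP on $\PP(H^\ell)$ with good rate function $I_\ell$, uniformly over $\lambda\in\Lambda(\delta,M)$. The uniform version of the Dawson--G\"artner theorem (Section~4.6 in~\cite{ADOZ00}) then yields the LDP for $\{\zeta_k^\infty\}$ on $\PP(\HHH)$, uniformly over $\lambda\in\Lambda(\delta,M)$, with the good rate function
$$
\III(\sigma)=\sup_{\ell\ge1}I_\ell\bigl((\pi_\ell)_*\sigma\bigr),\quad \sigma\in\PP(\HHH),
$$
which manifestly does not depend on~$\lambda$, and whose goodness follows from the goodness of each $I_\ell$ together with the fact that the level sets are intersections of preimages of compact sets under continuous maps. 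One should check that $\PP(\HHH)$ with the weak topology is Polish (it is, since $\HHH$ is Polish) so that the LDP is stated in the proper category.

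The only genuinely non-routine point is verifying that the uniformity in~$\lambda$ is preserved under the projective limit; this is where one must be slightly careful, but it reduces to the observation that the Dawson--G\"artner argument is entirely set-theoretic in the index~$k$ — the upper bound uses only a covering argument at each finite level and the lower bound only that open sets in the projective-limit topology are generated by cylinders — so a uniform LDP at every finite level $\ell$ produces a uniform LDP in the limit without any extra hypothesis. Since all of this is carried out in detail in Section~1.5 of~\cite{JNPS-2012} for bounded kicks and the argument is insensitive to the boundedness of the noise, I would simply invoke that reference.
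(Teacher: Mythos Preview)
Your proposal is correct and follows exactly the route the paper indicates: the paper itself gives no detailed proof, stating only that the result is an immediate consequence of Theorem~\ref{t1.2} and the Dawson--G\"artner theorem, and referring to Section~1.5 of~\cite{JNPS-2012} for the argument. One small remark: with the definitions~\eqref{1.18} the compatibility $(\pi_\ell)_*\zeta_k^\infty=\zeta_k^\ell$ holds exactly (not just up to a boundary term), so your $O((\ell'-\ell)/k)$ correction is unnecessary, though harmless.
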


\subsection{Scheme of the proof of Theorem~\ref{t1.2}}
\label{s1.5}
We begin with some definitions. Given a random dynamical system (RDS for short) of the form~\eqref{1.1} and an integer $\ell\ge1$, we define the {\it Markov $\ell$-process\/} associated with~\eqref{1.1} in the following way: the phase space is the direct product of $\ell$ copies of~$H$, and the time-$1$ transition function is given by
\begin{equation} \label{1.12}
P_1^\ell(\vvv,\Gamma_1\times\cdots\times\Gamma_\ell)=
\delta_{v^2}(\Gamma_1)\cdots\delta_{v^{\ell}}(\Gamma_{\ell-1})
P_1(v^{\ell}, \Gamma_\ell), 
\end{equation}
where $\vvv=[v^1,\dots,v^{\ell}]\in H^\ell$ and $\Gamma_j\in\BB(H)$ for $j=1,\dots,\ell$. In other words, the Markov $\ell$-process associated with~\eqref{1.1} is the Markov process corresponding to the RDS in $H^\ell$ defined by 
\begin{equation} \label{1.015}
\uuu_k=\Ss(\uuu_{k-1})+\eeta_k,
\end{equation}
where $\uuu_k=[u_k^1, \dots,u_k^{\ell}]$, $\eeta_k=[0,\dots,0,\eta_{k}]$, and $\Ss:H^\ell\to H^\ell$ is the mapping given by
$$
\Ss(\vvv)=\bigl[v^2,\dots,v^\ell,S(v^\ell)\bigr], 
\quad \vvv=[v^1,\dots,v^\ell]\in H^\ell.
$$
Let us note that if $v\in H$ and $\vvv\in H^\ell$ are such that $v^\ell=v$, then the trajectories~$\{u_k\}$ and~$\{\uuu_k\}=\{[u_k^1,\dots,u_k^\ell]\}$ of the RDS~\eqref{1.1} and~\eqref{1.015} issued from~$v$ and~$\vvv$, respectively, satisfy the relation
\begin{equation} \label{1.017}
u_k^j=u_{k-\ell+j}\quad\mbox{for $k\ge\ell-j$, $1\le j\le\ell$.}
\end{equation}
Denote by~$\zzeta_k$ the occupation measures for~\eqref{1.015}:
 \begin{equation} \label{1.016}
\zzeta_k=\frac1k\sum_{n=1}^{k}\delta_{\uu_n},\quad k\ge1.
\end{equation}
Given $\lambda\in\PP(H)$, we denote by~$\lambda^{(\ell)}\in\PP(H^\ell)$ the tensor product of $\ell-1$ copies of~$\delta_0$ with~$\lambda$; that is, $\lambda^{(\ell)}=\delta_0\otimes\cdots\otimes\delta_0\otimes\lambda$. 
It is straightforward to check that, for any $\delta>0$ and $M>0$, the random measures~$\zzeta_k$ and~$\zeta_k^\ell$ corresponding to the initial laws~$\lambda^{(\ell)}$ and~$\lambda$, respectively, are exponentially equivalent uniformly with respect to $\lambda\in\Lambda(\delta,M)$ (cf.\ Lemma~6.2 in~\cite{JNPS-2012}). We shall thus study the LDP for the occupation measures~\eqref{1.016}.

Given positive numbers~$\delta$ and~$M$, we denote $\LLambda(\delta,M)$ the set of measures $\lambda\in\PP(H^\ell)$ satisfying the condition
$$
\int_{H^\ell}\exp\bigl\{\delta\bigl(\varPhi(v_1)+\cdots+\varPhi(v_\ell)\bigr)\bigr\}\lambda(\dd v_1,\dots,\dd v_\ell)\le M.
$$
Suppose we can prove the following two properties:
\begin{description}
\item[\bf  Property~1: Existence of a limit.]  
For any~$V\in C_b(H^\ell)$ and~$\lambda\in \LLambda(\delta,M)$, the limit (called {\it pressure function\/})
\begin{equation} \label{1.21}
Q_\ell(V)=\lim_{k\to+\ty} \frac{1}{k}
\log\E_\lambda\exp\biggl(\,\sum_{n=1}^kV(\uuu_n)\biggr)
\end{equation}
exists and does not depend on the initial measure. Moreover, this limit is uniform with respect to $\lambda\in \LLambda(\delta,M)$ for any $\delta>0$ and $M>0$.
\end{description}
  \begin{description}
\item[\bf Property~2: Uniqueness of the equilibrium state.]
There is  a   vector space  $\VV\subset C_b( H^\ell)$ such that, for any compact set $K\subset H^\ell$, the family of restrictions to~$K$ of the functions in~$\VV$   is dense in~$C(K)$, and for any~$V\in \VV $ there is at most one~$\sigma_V\in \PP(H^\ell)$ satisfying the relation
\begin{equation}\label{6.2}
Q_\ell(V)=  \lag V, \sigma_V\rag-I_\ell(\sigma_V),
\end{equation} 
where $I_\ell(\sigma)$ denotes the Legendre transform of~$Q_\ell$.
\end{description}
In this case, a generalisation of a result established by Kifer in the case of a compact space shows that the LDP holds for~$\{\zzeta_k\}$, uniformly in $\lambda\in\LLambda(\delta,M)$, provided that the RDS~\eqref{1.015} possesses a property of exponential tightness; see Theorem~\ref{T:2.3} for the exact formulation. We thus need to prove the above two properties. 

To this end, given a function $V\in C_b(H^\ell)$, we introduce a generalised Markov semigroup by the formula
\begin{equation} \label{6.3}
\PPPP_k^V f(\uuu):=\E_\uuu f(\uuu_k)\exp \biggl(\,\sum_{n=1}^kV(\uuu_n)\biggr), 
\quad f\in C_b(H^\ell),
\end{equation}
and denote by $\PPPP_k^{V*} :\MM_+(H^\ell)\to \MM_+(H^\ell)$ its dual semigroup. Under some hypotheses on the kernel, we describe the asymptotic behaviour of generalised Markov semigroups in Theorem~\ref{T:3.1}. We then construct explicitly a vector space~$\VV\subset C_b(H^\ell)$ such that the hypotheses of Theorem~\ref{T:3.1} are satisfied for~$\{\PPPP_k^V\}$ with any $V\in\VV$. It follows that there is a function $h_V\in C_+( H^\ell)$, a
measure $\mu_V\in\PP(H^\ell)$ and a number $\lambda_V>0$ such that
$\PPPP_1^V h_V=\la_V h_V$, and for any $R>0$, $f\in  C_b( H^\ell)$, and $\nu\in \PP( H^\ell)$ we have
\begin{align}
\lambda_V^{-k}\PPPP_k^V f&\to\lag f,\mu_V\rag h_V
\quad\mbox{in~$C(B_{H^\ell}(R)) $ as~$k\to\infty$}, \label{1.25}\\
\lambda^{-k}_V\PPPP_k^{V*}\nu&\to\lag h_V,\nu\rag\mu_V
 \quad\mbox{in~$\MM_+(H^\ell)$ as~$k\to\infty$}. \label{1.26}
\end{align}
Taking $f=1$ in~\eqref{1.25},  we obtain Property~1 for~$V\in\VV$ and $\lambda=\delta_\uuu$, and an approximation argument enables one to prove it for any~$V\in C_b(H^\ell)$, uniformly with respect to $\lambda\in\LLambda(\delta,M)$.
 
To establish Property~2, we first show that any equilibrium state~$\sigma_V$ is a stationary measure for the following Markov semigroup:
\begin{equation} \label{1.27}
\SSS^V_k g:=\la_V^{-k} h_V^{-1} \PPPP_k^V (gh_V), \quad g\in C_b(H^\ell).
\end{equation}
We then deduce the uniqueness of stationary measure for~$\SSS^V_k$ from convergence~\eqref{1.26}, showing that $\sigma_V(\dd \vvv)= h_V(\vvv) \mu_V (\dd \vvv)$. 

The key point in the above analysis is the proof of the uniform Feller property (see Theorem~\ref{T:3.1}). It is based on a coupling argument and is carried out in Section~\ref{s7}. The other details of the proof of Theorem~\ref{t1.2} are presented Section~\ref{s8}.

\section{Applications}
\label{s2}
In this section, we apply our main results to some concrete examples of PDE's with random perturbations. We shall confine ourselves to the 2D Navier--Stokes system and the complex Ginzburg--Landau equation without nonlinear dissipation; however, the results apply equally well to other models, such as nonlinear reaction-diffusion system and the Ginzburg--Landau equation without linear dispersion; see~\cite{JNPS-2013,KN-2013}. 

\subsection{Navier--Stokes system}
\label{s2.1}
Let us consider the 2D Navier--Stokes system in a bounded domain, subject to a random kick force. After projecting it to the space~$H$ of divergence-free square integrable vector fields with zero normal component (see~\eqref{0.3}), we obtain the evolution equation
\begin{equation} \label{02.1}
\dot u+\nu Lu+B(u)=h+\eta(t).
\end{equation}
Here $\nu>0$ is the viscosity, $L=-\Pi\Delta$ is the Stokes operator, $B(u)=\Pi(\langle u,\nabla\rangle u)$ stands for the bilinear term, $\Pi$ is the orthogonal projection in~$L^2$ onto the closed subspace~$H$, $h\in H$ is a deterministic function, and~$\eta$ is a random process of the form~\eqref{0.2}, in which~$\{\eta_k\}$ is a sequence of i.i.d.\ random variables in~$H$. The well-posedness of~\eqref{02.1} is well known; e.g., see Section~6 in~\cite[Chapter~1]{lions1969} or Section~2.3 in~\cite{KS-book}. Normalising the trajectories to be right-continuous and setting $u_k=u(k,x)$, for any initial state $u\in H$ we obtain a sequence $\{u_k\}$ satisfying~\eqref{1.1}. 

\smallskip
We wish to prove that Theorems~\ref{t1.1} and~\ref{t1.2} are applicable\footnote{The fact that Theorem~\ref{t1.1} holds  for the Navier--Stokes system is well known; see Section~3.4 in~\cite{KS-book}.} to the Markov process associated with~\eqref{02.1}. Let us denote by~$S_t:H\to H$ the nonlinear semigroup generated by Eq.~\eqref{02.1}  with $\eta\equiv0$ and let $S=S_1$. 

\begin{proposition} \label{t2.1}
Conditions~{\rm(A)} and~{\rm(B)} stated in Section~\ref{s1.1} are satisfied for~$S$ with the following choice of parameters:
\begin{align*}
\varPhi(u)&=1+\|u\|^2,  & \pppp(u)&=C\int_0^1\bigl(\|S_t(u)\|_1^2+1\bigr)\dd t,\\ 
\gamma_N&=\alpha_{N+1}^{1/2}, & H_N&=\lspan\{e_1,\dots,e_N\},
\end{align*}
where $C$ is a positive number, $\|\cdot\|_1$ stands for the $H^1$-norm, and~$\{e_j\}$ is a complete set of normalised eigenfunction for the Stokes operator, with a non-decreasing order of the corresponding eigenvalues~$\{\alpha_j\}$. Moreover, $S(0)$ belongs to the space~$U$ defined after Condition~{\rm(C)}, and if $\E\exp(\varkappa\|\eta_1\|^2)<\infty$ for some $\varkappa>0$, then the functional~$\pppp$ is stabilisable with a function~$Q$ satisfying~\eqref{1.09}. 
\end{proposition}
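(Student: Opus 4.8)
The plan is to verify Conditions~(A) and~(B) by invoking the standard a priori estimates for the 2D Navier--Stokes semigroup, then to check the additional properties ($S(0)\in U$ and stabilisability of~$\pppp$) using exponential moment bounds that are by now classical for kick-forced NS systems. First, for Condition~(A), I would take $\varPhi(u)=1+\|u\|^2$, so that~\eqref{1.03} holds with $\alpha=\beta=2$. The dissipativity inequality~\eqref{1.04} follows from the energy estimate for~\eqref{02.1}: multiplying the equation (with $\eta\equiv0$) by~$u$ and using the Poincar\'e inequality $\|u\|_1^2\ge\alpha_1\|u\|^2$ together with $\langle B(u),u\rangle=0$, one gets $\|S_t(u)\|^2\le e^{-\nu\alpha_1 t}\|u\|^2+C_h$ for some constant depending on~$h$ and~$\nu$; at $t=1$ this yields $\varPhi(S(u))\le q_0\varPhi(u)+C_0$ with $q_0=e^{-\nu\alpha_1}<1$, and then the elementary inequality $(1+\|S(u)+v\|^2)\le 2\varPhi(S(u))+2\|v\|^2\le 2q_0\varPhi(u)+2C_0+2\varPhi(v)$ gives~\eqref{1.04} after choosing the constant~$C$ large enough and, if necessary, replacing~$q_0$ by a slightly larger $q<1$ (absorbing the factor~$2$ requires $2q_0<1$; if $\nu\alpha_1$ is small one instead iterates or rescales $\varPhi$, but the standard trick is to note $\varPhi(S(u)+v)\le q\varPhi(u)+C\varPhi(v)$ holds directly from $\|S(u)+v\|\le\|S(u)\|+\|v\|$ and $\|S(u)\|^2\le q_0\|u\|^2+C_h$ by Young's inequality with a small parameter).

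Second, for Condition~(B), the point is the smoothing property of the NS flow: for any $u_0\in H$ the solution $S_t(u_0)$ lies in $H^1$ for $t>0$, and more is true — one has the difference estimate. Given two initial data $u,v$, the difference $w(t)=S_t(u)-S_t(v)$ satisfies $\dot w+\nu Lw+B(w,S_t(u))+B(S_t(v),w)=0$, and a Gronwall argument on $\|w(t)\|^2$ gives $\|w(1)\|_1^2\le C\exp\bigl(C\int_0^1\|S_t(u)\|_1^2\,\dd t+C\int_0^1\|S_t(v)\|_1^2\,\dd t\bigr)\|u-v\|^2$, which is exactly the form~\eqref{1.4} with $\pppp$ as stated, once one observes that $\|(I-\mathsf{P}_N)g\|\le\alpha_{N+1}^{-1/2}\|g\|_1$ for $g\in H^1$ (spectral decomposition of the Stokes operator), so $\gamma_N=\alpha_{N+1}^{1/2}$. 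The density of $\bigcup_N H_N$ and $H_0=\{0\}$ are immediate from the spectral basis. One must also check $\pppp$ is continuous on~$H$ and bounded on balls, which follows from continuous dependence of $S_t(u)$ on~$u$ in $L^2(0,1;H^1)$ and the energy-type bound $\int_0^1\|S_t(u)\|_1^2\,\dd t\le C(\|u\|^2+1)$.

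Third, $S(0)\in U$: since $S(0)=S_1(0)$ solves the deterministic equation with zero initial data, the smoothing estimate gives $S(0)\in H^2$ (using $h\in H$ and parabolic regularity, $S_t(0)\in H^2$ for $t>0$), and then $\|\mathsf{Q}_N S(0)\|\le\alpha_{N+1}^{-1}\|S(0)\|_2\le C\alpha_{N+1}^{-1}\le C\gamma_N^{-1}$ since $\gamma_N=\alpha_{N+1}^{1/2}\le\alpha_{N+1}$ for $N$ large; hence $\|S(0)\|_U<\infty$. Finally, stabilisability of~$\pppp$: writing $\pppp(u_k)\le C\int_0^1(\|S_t(u_{k-1}+\eta_k\text{-shifted flow})\|_1^2+1)\,\dd t$ — more precisely $\pppp(u_k)=C\int_{k-1}^k(\|u(t)\|_1^2+1)\,\dd t$ along the true trajectory — one sums and uses the enstrophy balance. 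The sum $\sum_{n=1}^k\pppp(u_n)=C\int_0^k(\|u(t)\|_1^2+1)\,\dd t$ is controlled by $C(\|u_0\|^2+\sum_{n=1}^k\|\eta_n\|^2+k)$ via the energy inequality, so $\E_u\exp\{\delta\sum_{n=1}^k\pppp(u_n)\}\le e^{\delta C\|u_0\|^2+\delta Ck}\prod_n\E\exp(\delta C\|\eta_n\|^2)$, which is finite and of the form $Q(\|u\|)e^{ck}$ with $Q(r)=e^{\delta C r^2}\le e^{\rho\varPhi(r)}$ provided $\delta$ is small enough that $\E\exp(\delta C\|\eta_1\|^2)<\infty$ (guaranteed by $\E\exp(\varkappa\|\eta_1\|^2)<\infty$).

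\textbf{Main obstacle.} The delicate point is making the exponential moment in the stabilisability estimate truly uniform and of the required form~\eqref{1.09}: the enstrophy (the $\|u\|_1^2$ integral) does not satisfy a clean recursive bound like the energy does, so one cannot simply iterate. The standard resolution — which I expect to reproduce — is to use the energy balance $\|u_k\|^2+\nu\int_{k-1}^k\|u(t)\|_1^2\,\dd t\le\|u_{k-1}\|^2+C+(\text{martingale/noise terms})$, sum over $k$ to get a telescoping bound $\nu\int_0^k\|u(t)\|_1^2\,\dd t\le\|u_0\|^2+\sum_{n=1}^k\|\eta_n\|^2+Ck$, and then apply the exponential moment bound on the i.i.d.\ kicks. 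The quadratic-in-$\|\eta_n\|$ exponent is exactly why the hypothesis $\E\exp(\varkappa\|\eta_1\|^2)<\infty$ appears, and one must choose $\delta\le\varkappa\nu/C$ so the product of moment generating functions converges; verifying that this $\delta$ can be taken the same as the one in Condition~(C)~\eqref{1.8} (or noting that it suffices to have \emph{some} $\delta>0$, as stabilisability only requires existence) is the bookkeeping one has to be careful about.
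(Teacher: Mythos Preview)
Your proposal is correct and follows essentially the same route as the paper's proof: energy estimate for~(A), the $H^1$-Lipschitz bound~\eqref{2.02} plus the spectral inequality $\|{\mathsf Q}_Ng\|\le\alpha_{N+1}^{-1/2}\|g\|_1$ for~(B), and the telescoping energy balance $\nu\int_0^k\|u(t)\|_1^2\,\dd t\le\|u_0\|^2+C\sum_n\|\eta_n\|^2+Ck$ for stabilisability (the paper simply cites Proposition~2.3.8 in~\cite{KS-book} for this step). The only cosmetic difference is that for $S(0)\in U$ the paper observes directly that $\|u\|_U\le\|u\|_1$ (so $V\hookrightarrow U$ and $S(0)\in V$ suffices), whereas you go through $H^2$-regularity, which also works but is more than needed.
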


Once this result is established, we can claim that  the conclusions of Theorem~\ref{t1.1} and~\ref{t1.2} are valid for the 2D Navier--Stokes system~\eqref{02.1}, provided that the random variables~$\eta_k$ satisfy Condition~(C) and all coefficients~$b_j$ are nonzero.\footnote{Note that the orthonormal basis~$\{e_j\}$ entering~\eqref{1.5} does not need to coincide with the system of eigenvectors for the Stokes operator, because the validity of Condition~(B) for one orthonormal basis implies its validity for any other.} In particular, it is easy to check that those results are true if the law of~$\eta_k$ is a non-degenerate Gaussian measure concentrated on the Sobolev space of order~$2$.  Furthermore, in view of Corollary~\ref{c1.3} and the inequality $\|u\|^2\le C(\pppp(u)+1)$ which is true for any $u\in H$ (see Exercise~2.1.23 in~\cite{KS-book}), the uniform LDP is valid for the energy functional $f(u)=\|u\|$. Considering the Navier--Stokes system on higher Sobolev spaces, we can establish the LDP for other physically relevant functionals, such as the enstrophy and correlation tensors; cf.\ Section~1.3 in~\cite{JNPS-2012}. However, we do not give any detail here, because a similar situation is considered in the next subsection for the technically more complicated case of the complex Ginzburg--Landau equation. 

\begin{proof}[Proof of Proposition~\ref{t2.1}]
The fact that $S:H\to H$ is continuous is well known; e.g., see Sections~1.6 in~\cite{BV1992}. Inequality~\eqref{1.03} with $\alpha=\beta=2$ is trivial for the above choice of~$\varPhi$, and~\eqref{1.04} is the dissipativity property of the Navier--Stokes dynamics; see Theorem~6.1 in~\cite[Chapter~1]{BV1992}. 

Let us check Condition~(B). The continuity of the resolving operator for the Navier--Stokes system from~$H$ to $L^2(0,1;H^1)$ implies that~$\pppp$ is a continuous function on~$H$. We introduce the space $V=H\cap H_0^1(D,\R^2)$ and endow it with the usual $H^1$-norm~$\|\cdot\|_1$. It is well known that (e.g., see Proposition~2.1.25 in~\cite{KS-book})
\begin{equation} \label{2.02}
\|S(u)-S(v)\|_1\le \exp\bigl(\pppp(u)+\pppp(v)\bigr)\,\|u-v\|\quad\mbox{for $u,v\in H$}. 
\end{equation}
Applying the Poincar\'e inequality, we obtain~\eqref{1.4}. 

We now prove that the space~$V$ is embedded into~$U$. Recalling~\eqref{1.007}, we write
$$
\|u\|_U^2=\sup_{N\ge0}\bigl(\alpha_{N+1}\|{\mathsf Q}_Nu\|^2\bigr)\le \sup_{N\ge0}\sum_{j=N+1}^\infty \alpha_j\langle u,e_j\rangle^2
= \|u\|_1^2. 
$$
Since~$S$ maps continuously~$H$ into~$V$, we conclude that $S(0)\in U$. Finally, the stabilisibility of~$\pppp$ is established in Step~2 of the proof of Proposition~2.3.8 in~\cite{KS-book}.
\end{proof}

\subsection{Ginzburg--Landau equations}
\label{s2.2}
Let $D\subset\R^d$ ($d\le4$) be a bounded domain with $C^2$-smooth boundary~$\p D$. We consider the complex Ginzburg--Landau equation with a cubic nonlinearity:
\begin{equation} \label{2.03}
\dot u-(\nu+i)\Delta u+ia|u|^2u=h(x)+\eta(t,x), \quad x\in D. 
\end{equation}
Here $u(t,x)$ is an unknown complex-valued function, $\nu$ and~$a$ are positive parameters, $h\in H^1(D,\C)$ is a deterministic function, and~$\eta$ is a random force of the form~\eqref{0.2}, in which~$\{\eta_k\}$ is a sequence of i.i.d.\ random variables in~$H_0^1(D,\C)$. 
Equation~\eqref{2.03} is supplemented with the Dirichlet boundary condition,
\begin{equation} \label{2.04}
u\bigr|_{\p D}=0,
\end{equation}
and an initial condition at time $t=0$,
\begin{equation} \label{2.05}
u(0,x)=u_0(x).
\end{equation}
It is well known that, for any $u_0\in H_0^1(D,\C)$, the problem~\eqref{2.03}--\eqref{2.05} possesses a unique solution~$u(t,x)$ that belongs to the space $C(\R_+,H_0^1)\cap L_{\rm loc}^2(\R_+,H^2)$; see~\cite{cazenave2003} for the more complicated case of Schr\"odinger equation. Normalising solutions to be right-continuous and restricting them to the integer lattice, we obtain a sequence~$\{u_k\}$ satisfying~\eqref{1.1}, where $S$ denotes the time-$1$ shift along trajectories of~\eqref{2.03} with $\eta\equiv0$. Our aim is to prove that Theorems~\ref{t1.1} and~\ref{t1.2} are applicable in this situation. 

Let us denote by~$H$ the complex Sobolev space~$H_0^1(D)$ and regard it as a real Hilbert space endowed with the scalar product
$$
\langle u,v\rangle=\Re\int_D\nabla u\cdot\nabla\bar v\,\dd x
$$
and the corresponding norm~$\|\cdot\|_1$. Let $S_t:H\to H$ be the resolving semigroup for problem~\eqref{2.03}, \eqref{2.04} with $\eta\equiv0$ and let~$S=S_1$. We introduce a continuous functional $\HH:H\to\R$ by the relation
$$
\HH(u)=\int_D\Bigl(\frac12|\nabla u|^2+\frac{a}{4}|u|^4\Bigr)\dd x.
$$
Let us denote by~$e_j$ the eigenfunctions of the Dirichlet Laplacian in~$D$, indexed in the non-decreasing order of the corresponding eigenvalues~$\{\alpha_j\}$ and normalised by the condition~$\|\nabla e_j\|=1$ (i.e., $\|e_j\|=\frac{1}{\sqrt{\alpha_j}}$). Then the vectors $\{e_j,ie_j,j\ge1\}$ form an orthonormal basis in~$H$. 

\begin{proposition} \label{p2.2}
Conditions~{\rm(A)} and~{\rm(B)} are satisfied for~$S$ if we choose
\begin{align*}
\varPhi(u)&=1+\HH^2(u), &  \pppp(u)&=C\int_0^1\bigl(\|\nabla S_t(u)\|^4+1\bigr)\dd t,\\
\gamma_N&=\alpha_{N+1}^{\e}, & H_N&=\lspan\{e_j,ie_j,1\le j\le N\}. 
\end{align*}
where $C>0$ is sufficiently large and $\e>0$ can be taken arbitrarily small. Moreover, $S(0)$ belongs to the space~$U$ defined after Condition~{\rm(C)}, and if $\E\exp(\varkappa\HH^2(\eta_1))<\infty$ for some $\varkappa>0$, then the functional~$\pppp$ is stabilisable with a function~$Q$ satisfying~\eqref{1.09}. 
\end{proposition}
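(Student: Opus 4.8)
The plan is to verify, in turn, the four assertions of the proposition, following closely the blueprint of the proof of Proposition~\ref{t2.1}, with the energy $\HH$ playing the role of the Lyapunov functional and with the parabolic smoothing of the analytic semigroup $e^{(\nu+i)t\Delta}$ replacing the regularising action of the Stokes semigroup; the analytic input on the complex Ginzburg--Landau flow that we shall use is contained in~\cite{KN-2013} (see also~\cite{cazenave2003}). Continuity of $S$ on $H=H_0^1(D)$ and of the map $u\mapsto S_\cdot(u)\in C([0,1],H)$ are classical, and they immediately give that $\pppp$ is continuous and bounded on balls. \textbf{Condition~(A).} Inequality~\eqref{1.03} holds with $\alpha=4$, $\beta=8$ by the definition of $\HH$ together with the Sobolev embedding $H_0^1(D)\hookrightarrow L^4(D)$, valid since $d\le4$. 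For~\eqref{1.04} I would test~\eqref{2.03} (with $\eta\equiv0$) against $\overline{-\Delta u+a|u|^2u}$, the $L^2$-gradient of $\HH$; the crucial algebraic fact is that the contributions of the conservative term $-ia|u|^2u$ and of the imaginary part of $(\nu+i)\Delta u$ cancel, leaving
$$
\frac{\dd}{\dd t}\HH(S_tu)\le-\kappa\bigl(\|\Delta S_tu\|^2+\|\,|S_tu|\,|\nabla S_tu|\,\|^2\bigr)+\e\,\HH(S_tu)+C_\e ,
$$
the last two terms coming from Young's inequality applied to $\langle h,\cdot\rangle$. Taking $\e$ small and using $\|\Delta v\|^2\ge\alpha_1\|\nabla v\|^2$ and $\|\,|v|\,|\nabla v|\,\|^2\ge c\|v\|_{L^4}^4$ to absorb $\e\HH$, one gets $\frac{\dd}{\dd t}\HH(S_tu)\le-\kappa'\HH(S_tu)+C$, hence $\HH(S_1u)\le q_0\HH(u)+C$ with $q_0=e^{-\kappa'}<1$ by Gr\"onwall. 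Combined with the elementary bound $\HH(v+w)\le(1+\e)\HH(v)+C_\e\HH(w)$ (split the gradient and $L^4$ parts and apply Young) and squaring, this yields~\eqref{1.04} with some $q<1$.

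\textbf{Condition~(B) and $S(0)\in U$.} Writing $w(t)=S_tu-S_tv$, one has $\dot w-(\nu+i)\Delta w=-ia\bigl(|S_tu|^2S_tu-|S_tv|^2S_tv\bigr)$, with right-hand side bounded pointwise by $Ca\bigl(|S_tu|^2+|S_tv|^2\bigr)|w|$. An $H^1$-energy estimate for $w$, in which the cubic potential is controlled, through the Sobolev inequality ($d\le4$) and interpolation, by powers of $\|\nabla S_tu\|$ and $\|\nabla S_tv\|$, gives after Gr\"onwall
$$
\|S(u)-S(v)\|_1\le\exp\bigl(\pppp(u)+\pppp(v)\bigr)\,\|u-v\|_1 ,
$$
the fourth powers in the definition of $\pppp$ being exactly what Young's inequality produces inside the Gr\"onwall exponent. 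A standard bootstrap based on the Duhamel formula for~\eqref{2.03} and the smoothing bound $\|e^{(\nu+i)t\Delta}\|_{H^s\to H^{s+\theta}}\le Ct^{-\theta/2}$ upgrades this to $\|S(u)-S(v)\|_{H^{1+2\e}}\le\exp(\pppp(u)+\pppp(v))\|u-v\|_1$ for any $\e>0$. Since, for the Dirichlet eigenbasis $\{e_j\}$ normalised by $\|\nabla e_j\|=1$, one has $\|(I-{\mathsf P}_N)z\|_1\le\alpha_{N+1}^{-\e}\|z\|_{H^{1+2\e}}$, this is precisely~\eqref{1.4} with $\gamma_N=\alpha_{N+1}^\e$ (any $\e>0$ is admissible, so $\{\gamma_N\}$ may be made to grow as slowly as one wishes). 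The same Duhamel/smoothing argument applied to the trajectory issued from~$0$, with source $h-ia|S_t0|^2S_t0$ and using $h\in H^1(D,\C)$, yields $S(0)=S_1(0)\in H^{1+2\e}$; and $\|z\|_U^2=\sup_N\alpha_{N+1}^{2\e}\|(I-{\mathsf P}_N)z\|_1^2\le\|z\|_{H^{1+2\e}}^2$ by~\eqref{1.007}, so $S(0)\in U$.

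\textbf{Stabilisability of $\pppp$.} Applying~\eqref{1.04} along the trajectory gives $\varPhi(u_j)\le q\,\varPhi(u_{j-1})+C\,\varPhi(\eta_j)$ with $q<1$, hence $\sum_{j=1}^k\varPhi(u_j)\le(1-q)^{-1}\varPhi(u_0)+C(1-q)^{-1}\sum_{i=1}^k\varPhi(\eta_i)$. I expect the genuine obstacle to lie here: one must bound $\sum_{j=1}^k\pppp(u_j)$ in a way \emph{compatible with}~\eqref{1.09}. A crude use of $\|\nabla v\|^2\le2\HH(v)$ gives only $\pppp(u)\le C\varPhi(u)$, leading to a bound $\E_u\exp\{\delta\sum_{j=1}^k\pppp(u_j)\}\le e^{\rho\varPhi(u)}e^{ck}$ in which the $u$-dependence is carried by $\varPhi(u)$; but $\varPhi$ is not a function of $\|u\|_1$ (for $\|u\|_1=r$ it ranges between $\asymp r^4$ and $\asymp r^8$), so this does not by itself furnish a function $Q$ of $\|u\|_1$ satisfying both $\E_u\exp\{\cdots\}\le Q(\|u\|_1)e^{ck}$ and $Q(\|u\|_1)\le e^{\rho\varPhi(u)}$. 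What is needed is the sharper energy estimate
$$
\int_0^1\|\nabla S_tu\|^4\,\dd t\le C\bigl(1+\HH(u)\bigr) ,
$$
resting on the fact that the conservative cubic nonlinearity does not pump the gradient energy secularly; the relevant estimates are given in~\cite{KN-2013}. It yields $\pppp(u)\le C(1+\HH(u))$, hence $\sum_{j=1}^k\pppp(u_j)\le C\HH(u_0)+C\sum_{i=1}^k(1+\HH(\eta_i))+Ck$ by the $\HH$-analogue of the contraction above; exponentiating and using $\E\exp(\varkappa\HH^2(\eta_1))<\infty$, we obtain, for $\delta$ small,
$$
\E_u\exp\Bigl\{\delta\sum_{j=1}^k\pppp(u_j)\Bigr\}\le e^{\,C\delta\,\HH(u)}\bigl(\E e^{\,C\delta(1+\HH(\eta_1))}\bigr)^{k}\le e^{\,C\delta\,\HH(u)}\,e^{ck} .
$$
Since $\sup_{\|v\|_1\le r}\HH(v)\le C(1+r^4)$, the increasing continuous function $Q(r)=\exp\{C\delta(1+r^4)\}$ dominates the left-hand side, while $\varPhi(u)=1+\HH(u)^2\ge\tfrac14(1+\|u\|_1^4)$ gives $Q(\|u\|_1)\le e^{\rho\varPhi(u)}$ with $\rho=4C\delta$ — which is~\eqref{1.09}. (If $\E\exp(\varkappa\HH^2(\eta_1))<\infty$ fails there is nothing to prove.) This completes the verification that Theorems~\ref{t1.1} and~\ref{t1.2} apply to~\eqref{2.03}.
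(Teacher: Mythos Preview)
Your verification of Condition~(A) and your argument for $S(0)\in U$ match the paper's. The remaining two items, however, diverge from the paper's proof, and one of them contains a genuine error.

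\textbf{Condition~(B).} The paper does \emph{not} go through a Duhamel bootstrap to $H^{1+2\e}$. Instead it invokes a direct differential inequality for the high modes (inequality~(1.39) of~\cite{JNPS-2012}): setting $w={\mathsf Q}_N(u_1-u_2)$, one has
\[
\partial_t\|w\|_1^2\le-\bigl(\nu\alpha_{N+1}-C_5(\|u_1\|_1+\|u_2\|_1)^4\bigr)\|w\|_1^2,
\]
so Gr\"onwall gives the factor $\exp(-\nu\alpha_{N+1})$ directly, from which $\gamma_N=\alpha_{N+1}^\e$ (for \emph{any} $\e>0$) follows trivially. This is shorter and avoids the regularity bootstrap, whose nonlinear estimate in $H^s$ is delicate at the critical dimension $d=4$.

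\textbf{Stabilisability.} Here there is a real gap. Your ``sharper'' estimate
\[
\int_0^1\|\nabla S_t u\|^4\,\dd t\le C\bigl(1+\HH(u)\bigr)
\]
is \emph{false}. Already for the linear heat semigroup $S_t=e^{\nu t\Delta}$ one has $\|\nabla S_t u\|^2\sim e^{-ct}\|\nabla u\|^2$ and hence $\int_0^1\|\nabla S_t u\|^4\,\dd t\sim\|\nabla u\|^4\sim\HH^2(u)$; no conservation property of the cubic term can improve the exponent. The paper proceeds exactly through the ``crude'' route you rejected: integrating the differential inequality~\eqref{2.06} for $\HH^2$ over $[l-1,l]$ gives~\eqref{2.07}, summation yields $\sum_{l=0}^k\pppp(u_l)\le C_6\sum_{l=0}^k\varPhi(u_l)$, and Lemma~\ref{l3.1} (applied with the Lyapunov function~$\varPhi$) then gives $\E_u\exp\bigl(\delta\sum_{j=1}^k\pppp(u_j)\bigr)\le e^{ck}e^{\rho\varPhi(u)}$. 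Your worry that this bound cannot literally be rewritten as $Q(\|u\|_1)e^{ck}$ with $Q$ satisfying~\eqref{1.09} is formally correct (since $\sup_{\|v\|_1=r}\varPhi(v)\sim r^8$ while $\inf_{\|v\|_1=r}\varPhi(v)\sim r^4$), but in every subsequent use --- see~\eqref{6.25}, \eqref{6.015} --- the paper only invokes the \emph{composite} estimate $\E_u\exp(\delta\sum\pppp)\le e^{ck}e^{\rho\varPhi(u)}$, which is exactly what Lemma~\ref{l3.1} delivers. The intermediate function~$Q$ is a notational convenience (exact for Navier--Stokes, where $\varPhi=1+\|u\|^2$) rather than an independent object, and no finer control of $\pppp$ is needed.
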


\begin{proof}
The continuity of~$S$ is a standard fact and we omit the  proof, referring the reader to Chapter~1 in~\cite{BV1992} for a proof of similar properties for various PDE's. Inequality~\eqref{1.03} with $\alpha=4$ and $\beta=8$ follows from the definition and the Sobolev embedding $H_0^1\subset L^4$. To prove~\eqref{1.04}, recall that if~$u(t)$ is a solution of~\eqref{2.03}, \eqref{2.04}, then
$$
\frac{\dd}{\dd t}\HH(u)+c\,\HH(u)\le C_1(\|h\|_{L^4}^4+1),
$$
where $c>0$, and we denote by~$C_j$ unessential positive numbers; e.g., see inequality~(1.33) in~\cite{JNPS-2012}. It follows that
\begin{equation} \label{2.06}
\frac{\dd}{\dd t}\HH^2(u)+c\,\HH^2(u)\le C_2(\|h\|_{L^4}^8+1).
\end{equation}
Applying the Gronwall inequality, we derive
$$
\HH^2(S(u_0))\le e^{-c}\,\HH^2(u_0)+C_3(\|h\|_{L^4}^8+1).
$$
Since $\HH(w+v)\le (1+\theta)\HH(w)+C_\theta\HH(v)$ for any $\theta>0$ and a sufficiently large $C_\theta>1$, we have
$$
\HH^2(S(u_0)+v)\le e^{-c}(1+\theta)\HH^2(u_0)+C_4(\|h\|_{L^4}^8+1)+C_\theta\HH(v),\quad u_0,v\in\ H. 
$$
Choosing~$\theta>0$ so small that $q:=e^{-c}(1+\theta)<1$, we obtain~\eqref{1.04}. 

\smallskip
Let us prove~\eqref{1.4}. As is established in~\cite{JNPS-2012} (see inequality~(1.39)), if~$u_1$ and~$u_2$ are two solutions, then the function $w={\mathsf Q}_N(u_1-u_2)$ satisfies the differential inequality 
$$
\p_t\|w\|_1^2\le -\bigl(\nu\alpha_{N+1}-C_5(\|u_1\|_1+\|u_2\|_1)^4\bigr)\|w\|_1^2. 
$$
Application of the Gronwall inequality results in
$$
\bigl\|{\mathsf Q}_N(u_1(1)-u_2(1))\bigr\|^2
\le \exp\biggl(-\nu\alpha_{N+1}+C_5\int_0^1(\|u_1\|_1+\|u_2\|_1)^4\dd t\biggr)\|u_1^0-u_2^0\|^2,
$$
where $u_i^0$ is the initial state of~$u_i$. This implies inequality~\eqref{1.4} in which one can take for~$\gamma_N$ an arbitrary sequence such that $\exp(-\nu\alpha_{N+1})\le C\gamma_N^{-1}$. In particular, if we choose $\gamma_N=\alpha_{N+1}^{\e}$ with $\e\in(0,1]$, then
\begin{align*}
\|u\|_U^2&=\sup_{N\ge0}\bigl(\alpha_{N+1}^{2\e}\|{\mathsf Q}_Nu\|_1^2\bigr)
=\sup_{N\ge0}\Bigl(\alpha_{N+1}^{2\e}\sum_{j=N+1}^\infty \alpha_j\bigl(\langle u,e_j\rangle^2+\langle u,ie_j\rangle^2\bigr)\Bigr) \\
&=\sup_{N\ge0}\sum_{j=N+1}^\infty \alpha_j^{1+2\e}\bigl(\langle u,e_j\rangle^2+\langle u,ie_j\rangle^2\bigr)=\|u\|_{1+\e}^2. 
\end{align*}
Thus, the space~$H_0^1\cap H^{1+2\e}$ is continuously embedded into~$U$, and the regularising property of the Ginzburg--Landau dynamics implies that $S(0)\in U$. 

\smallskip
It remains to prove the stabilisability of~$\pppp$. To this end, let us take any solution~$u(t)$ of problem~\eqref{2.03}--\eqref{2.05} in which~$\eta$ is given by~\eqref{0.2}. Denoting by~$u_l^-$ the left-hand limit of~$u$ at the point $t=l$ and integrating~\eqref{2.06} in $t\in(l-1,l)$, we derive
\begin{equation} \label{2.07}
\HH^2(u_l^-)+c\int_{l-1}^l\HH^2(u)\,\dd t\le \HH^2(u_{l-1})+C_4(\|h\|_{L^4}^8+1). 
\end{equation}
Taking the sum over $l=1,\dots,k+1$ and recalling the definition of~$\pppp$ and~$\varPhi$, we see that 
$$
\sum_{l=0}^k\pppp(u_l)\le C_6\sum_{l=0}^k\varPhi(u_l),
$$
where $C_6$ depends on $\|h\|_{L^4}$, but not on the solution. The required assertion follows now from Lemma~\ref{l3.1} (see~\eqref{3.01}). 
\end{proof}

We have thus checked Conditions~(A) and~(B) for~$S$ and the stabilisability of~$\pppp$ under a suitable hypothesis on the law of~$\eta_k$. Therefore, if~$\{\eta_k\}$ satisfies Condition~(C), with all coefficients~$b_j$ nonzero, then the conclusions of Theorems~\ref{t1.1} and~\ref{t1.2} are valid. Moreover, it follows from~\eqref{2.07} with $l=0$ that $\pppp(u)\le C\varPhi(u)$. Hence, inequality~\eqref{1.8} is equivalent to
\begin{equation} \label{2.08}
\int_H\exp\bigl(\delta\HH^2(u)\bigr)\LL(\dd u)<\infty\quad\mbox{for some $\delta>0$},
\end{equation}
and the hypotheses of Corollary~\ref{c1.3} are satisfied for the energy functional~$\HH(u)$. 

Finally, let us mention that condition~\eqref{2.08} is rather restrictive and is not satisfied for Gaussian measures on~$H$. On the other hand, if we replace the cubic term in~\eqref{2.03} by the weaker nonlinearity $a|u|^{2\sigma}u$, where $\sigma\in[0, 2/d]$ for $d=3,4$, then the nondegenerate Gaussian measures concentrated on smooth functions  satisfy all required conditions. The proof of this fact is an immediate 
consequence of  the arguments used in this section. 

\section{Kifer's criterion}
\label{s4}
Let~$X$ be a Polish space  and let $\PP(X)$ be the set of probability Borel measures on~$X$ endowed with the weak topology and the corresponding Borel $\sigma$-algebra. We consider a directed set~$\Theta$ and a family~$\{\zeta_\theta,\theta\in\Theta\}$ of random probability measures on~$X$ that are defined on some probability spaces~$(\Omega_\theta, \FF_\theta, \pP_\theta)$. In other words, for any $\theta\in\Theta$, we have a measurable mapping $\zeta_\theta:\Omega_\theta\to\PP(X)$. In what follows, we often omit the parameter~$\theta$ from the notation of the probability space and write simply~$(\Omega,\FF,\IP)$, and the corresponding expectation is denoted by~$\E$; this will not lead to a confusion. In the case when~$X$ is compact, Kifer~\cite{kifer-1990} obtained a sufficient condition ensuring the validity of the LDP for the family~$\{\zeta_\theta\}$. In this section, we extend Kifer's result to the case of a general Polish space~$X$ under some additional hypotheses on~$\{\zeta_\theta\}$. The possibility of such an extension was hinted in Remark 2.2 of~\cite{kifer-1990}. Since this extension of Kiffer's criterion plays a 
central role in our work, in this section we give its detailed proof.

\smallskip
As in~\cite{kifer-1990}, we assume that the following limit exists for any~$V\in C_b(X)$:
 \begin{equation}\label{2.1}
Q(V)=\lim_{\theta\in\Theta} \frac{1}{r(\theta)}\log\int_{\Omega} 
\exp\bigl(r(\theta)\lag V, \zeta_\theta^\om\rag\bigr)\dd \pP(\om),
\end{equation} 
where~$r:\Theta\to \R$ is a given positive function such that $\lim_{\theta\in\Theta} r(\theta)=+\ty$. 
Then~$Q:C_b(X)\to \R$ is a convex~$1$-Lipschitz function such that~$Q(V)\ge0$ for any~$V\in C_+(X)$ and~$Q(C)=C$ for any constant~$C\in\R$. Recall that the {\it Legendre transform\/} of~$Q$ is defined on the space~$\MM(X)$ by 
\begin{equation} \label{2.2}
I(\sigma)=\sup_{V\in C_b(X)}\bigl(\lag V, \sigma\rag-Q(V)\bigr)
\quad\mbox{for $\sigma \in {\cal P}(X)$}, 
\end{equation}
and~$I(\sigma)=+\infty$ otherwise. It is well known that~$I(\sigma)$ is a convex function lower semicontinuous in the weak topology, and the function~$Q$ can be reconstructed by the formula 
$$
Q(V)= \sup_{\sigma\in\PP(X)} \bigl(\lag V, \sigma\rag-I(\sigma)\bigr).
$$
Any measure~$\sigma_V\in\PP(X)$ satisfying the relation
$$
Q(V)= \lag V,\sigma_V\rag-I(\sigma_V)
$$
is called an {\it equilibrium state\/} for~$V$.  

\begin{definition} \label{d4.1}
The family~$\{\zeta_\theta\}$ is said to be {\it exponentially tight}    with speed~$r(\theta)$ if for any~$a>0$ there is a compact set~$\KK_a\subset\PP(X)$ such that   
\begin{equation}\label{2.3}
\limsup_{\theta\in \Theta}\frac{1}{r(\theta)}\log \pP\{\zeta_\theta \in \KK_a^c\}\le -a.
\end{equation}
\end{definition}

A sufficient condition for the exponential tightness for a family of random probability measures is given by the following lemma. 

\begin{lemma}\label{L:2.2}  
Let $\varPhi : X  \to [0,+\ty]$ be a function with compact level sets $A_\alpha :=\{u \in X : \varPhi(u)\le \alpha\}$ for all $\alpha>0$  such that 
\begin{equation}\label{2.4}
\E\exp\bigl(r(\theta) \lag \varPhi,\zeta_\theta\rag \bigr)\le 
Ce^{c r(\theta)}\quad \text{for $\theta\in \Theta$},
\end{equation} where $C$ and $c$ are positive numbers. Then, for any~$a\ge0$, the level set
\begin{equation}\label{2.5}
\KK_a:=\{\sigma\in \PP(X): \lag \varPhi,\sigma\rag\le a\}
\end{equation} 
is compact in $\PP(X)$,  and~$\{\zeta_\theta\}$ is  an  exponentially tight family in~$\PP(X)$.
\end{lemma}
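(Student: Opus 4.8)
The statement has two halves — compactness of the level sets $\KK_a$ in the weak topology of $\PP(X)$, and exponential tightness of the family $\{\zeta_\theta\}$ — and the second is a short consequence of the first combined with an exponential Chebyshev estimate. The plan is as follows. First I would record that $\varPhi$ is lower semicontinuous: since $X$ is Polish, the compact sets $A_\alpha$ are closed, and closedness of every sublevel set is precisely lower semicontinuity of $\varPhi$. A standard approximation then upgrades this to lower semicontinuity of the functional $\sigma\mapsto\lag\varPhi,\sigma\rag$ on $\PP(X)$ for the weak topology: on a metric space one can write $\varPhi=\sup_{n\ge1}\varPhi_n$ with $\varPhi_n(u):=\inf_{v\in X}\bigl(\min\{\varPhi(v),n\}+n\,d_X(u,v)\bigr)$ an increasing sequence of bounded Lipschitz functions, so by monotone convergence $\lag\varPhi,\sigma\rag=\sup_n\lag\varPhi_n,\sigma\rag$ is a supremum of weakly continuous functionals, hence lower semicontinuous (and in particular Borel, so that $\om\mapsto\lag\varPhi,\zeta_\theta^\om\rag$ is measurable and the events $\{\zeta_\theta\in\KK_a^c\}$ are well defined).

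With this in hand, $\KK_a=\{\sigma:\lag\varPhi,\sigma\rag\le a\}$ is closed in $\PP(X)$. To show it is compact I would prove it is uniformly tight and invoke Prokhorov's theorem. Uniform tightness follows from Markov's inequality applied to each $\sigma\in\KK_a$ separately: for every $\alpha>0$,
$$
\sigma(X\setminus A_\alpha)=\sigma\{\varPhi>\alpha\}\le\alpha^{-1}\lag\varPhi,\sigma\rag\le a/\alpha ,
$$
so given $\e>0$ and any $\alpha>a/\e$ we get $\sigma(A_\alpha)\ge1-\e$ for all $\sigma\in\KK_a$, with $A_\alpha$ compact. Prokhorov's theorem then gives relative compactness of $\KK_a$ in $\PP(X)$, and together with closedness this yields compactness.

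For exponential tightness, fix $a>0$ and take the compact set $\KK_{a+c}$ just constructed; its complement is $\{\sigma:\lag\varPhi,\sigma\rag>a+c\}$, so the exponential Chebyshev inequality together with hypothesis~\eqref{2.4} gives
$$
\pP\{\zeta_\theta\in\KK_{a+c}^c\}\le e^{-r(\theta)(a+c)}\,\E\exp\bigl(r(\theta)\lag\varPhi,\zeta_\theta\rag\bigr)\le C\,e^{-a\,r(\theta)} ,
$$
whence $\frac{1}{r(\theta)}\log\pP\{\zeta_\theta\in\KK_{a+c}^c\}\le\frac{\log C}{r(\theta)}-a$; letting $r(\theta)\to+\ty$ along $\Theta$ produces $\limsup_{\theta\in\Theta}\frac{1}{r(\theta)}\log\pP\{\zeta_\theta\in\KK_{a+c}^c\}\le-a$, which is exactly~\eqref{2.3} with the compact set there taken to be $\KK_{a+c}$. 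There is no genuinely hard step in this argument: the only point requiring a little care is the lower semicontinuity of $\sigma\mapsto\lag\varPhi,\sigma\rag$ (which simultaneously secures closedness of $\KK_a$ and the measurability of the relevant events), while everything else is Markov's inequality, Prokhorov's theorem, and a one-line exponential Chebyshev bound.
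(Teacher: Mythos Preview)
Your proof is correct and follows essentially the same route as the paper: lower semicontinuity of $\sigma\mapsto\lag\varPhi,\sigma\rag$ to get closedness of~$\KK_a$, Markov's inequality plus Prokhorov for compactness, and an exponential Chebyshev bound for tightness. The only cosmetic differences are that the paper obtains the lower semicontinuity via Fatou and the portmanteau theorem (rather than your Moreau--Yosida approximation), and that it records the Chebyshev bound as $\limsup\le c-a$ for $\KK_a^c$ rather than shifting to~$\KK_{a+c}$ as you do.
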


\begin{proof} 
Since $\varPhi : X  \to [0,+\ty]$ is lower semicontinuous, the set $\{\varPhi>\alpha\}\subset X$ is open for any $\alpha\ge0$. Using the Fatou lemma and the portmanteau theorem, for any sequence $\{\mu_n\}\subset\PP(X)$ converging weakly to a limit $\mu\in\PP(X)$, we can write
\begin{align*}
\liminf_{n\to\infty}\langle\varPhi,\mu_n\rangle
&=\liminf_{n\to\infty}\int_0^\infty \mu_n(\{\varPhi>\alpha\})\,\dd \alpha 
\ge \int_0^\infty \liminf_{n\to\infty}\mu_n(\{\varPhi>\alpha\})\,\dd \alpha\\
&\ge \int_0^\infty \mu(\{\varPhi>\alpha\})\,\dd \alpha=\langle\varPhi,\mu\rangle,
\end{align*}
whence we conclude that the function $\lag \varPhi,\cdot\rag: \PP(X)\to \R$ is also lower semicontinuous. It follows that the level set~$\KK_a$ is closed for any~$a\ge0$. For any~$\sigma\in \KK_a$ and $\e>0$, we have
$$
\sigma(A_{a/\e}^c)\le \int_{A_{a/\e}^c} \frac{  \e \varPhi (u)}{a} \sigma(\dd u)\le \frac{\e }{a}\lag \varPhi,\sigma\rag\le\e. 
$$ Since  the set~$A_{a/\e}  $ is compact in~$X$, the  Prokhorov compactness criterion (see Theorem~11.5.4 in~\cite{dudley2002}) implies that~$\KK_a$ is compact.  The Chebyshev  inequality combined with~\eqref{2.4} now gives
\begin{equation}\label{2.8}
\limsup_{\theta\in \Theta}\frac{1}{r(\theta)}\log \pP\{\zeta_\theta \in \KK_a^c\}\le c-a.
\end{equation}
This proves that $\{\zeta_\theta\}$ is exponentially tight in~$\PP(X)$.
\end{proof}

Recal that $I(\Gamma)=\inf_{\sigma\in \Gamma} I (\sigma)$, $\Gamma\subset \PP(X)$. The following theorem is the main result  of this section. Its proof is based on the arguments of the paper~\cite{kifer-1990}  and uses some intermediate results from there. 

\begin{theorem}\label{T:2.3}
Suppose that limit~\eqref{2.1} exists for any~$V\in C_b(X)$, and that the conditions of Lemma~\ref{L:2.2}  are  satisfied.  Then the relation~\eqref{2.2} defines a good rate function~$I$, and the following upper bound holds  for any closed subset~$F\subset\PP(X):$ 
\begin{equation}\label{2.6}
\limsup_{\theta\in\Theta} \frac{1}{r(\theta)}\log\pP\{\zeta_\theta\in F\}\le -I(F).
\end{equation}
Furthermore, suppose that  there exists a vector space~$\VV\subset C_b(X)$ such that the restrictions of its functions to any compact set $K\subset X$ form a dense subspace in $C(K)$, and that, for any~$V\in \VV $, there is at most one equilibrium state. Then the following lower bound holds for any open subset~$G\subset\PP(X):$
\begin{equation}\label{2.7}
\liminf_{\theta\in\Theta} \frac1{r(\theta)}\log\pP\{\zeta_\theta\in G\}\ge -I(G).
\end{equation}
\end{theorem}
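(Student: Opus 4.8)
The strategy is to follow Kifer's original argument for compact $X$, using the exponential tightness supplied by Lemma~\ref{L:2.2} to reduce the general Polish case to what is essentially a compact situation. The upper bound~\eqref{2.6} splits into two parts. First, for the local upper bound, one fixes $\sigma\in\PP(X)$ and, given $\e>0$, picks $V\in C_b(X)$ with $\lag V,\sigma\rag-Q(V)>I(\sigma)-\e$ (or arbitrarily large when $I(\sigma)=+\infty$); the Chebyshev inequality applied to $\exp(r(\theta)\lag V,\zeta_\theta\rag)$ together with the definition~\eqref{2.1} of $Q(V)$ gives, on a weak neighbourhood of $\sigma$ where $\lag V,\cdot\rag$ is close to $\lag V,\sigma\rag$, the bound $\limsup r(\theta)^{-1}\log\pP\{\zeta_\theta\in\cdot\}\le -(I(\sigma)-2\e)$. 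Then, for a closed set $F$, one covers $F\cap\KK_a$ (with $\KK_a$ the compact set from Definition~\ref{d4.1}) by finitely many such neighbourhoods, handles the complement $\KK_a^c$ via exponential tightness~\eqref{2.3}, and lets $a\to\infty$; the standard $\log(e^{-A}+e^{-B})\le -\min(A,B)+\log 2$ bookkeeping finishes it. That $I$ is a good rate function follows because its level sets are weakly closed (Legendre transforms are l.s.c.) and contained, by~\eqref{2.4} and a Varadhan-type estimate, in some compact $\KK_a$.

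The lower bound~\eqref{2.7} is the harder half and is where Kifer's variational idea enters. It suffices to show that for every $\sigma\in\PP(X)$ with $I(\sigma)<\infty$ and every weak neighbourhood $G\ni\sigma$ one has $\liminf r(\theta)^{-1}\log\pP\{\zeta_\theta\in G\}\ge -I(\sigma)$. The idea is: since the restrictions of $\VV$ to compacts are dense in $C(K)$, and since by exponential tightness the measures effectively live on a compact set, one can find $V\in\VV$ for which $\sigma$ nearly maximizes $\lag V,\cdot\rag-I(\cdot)$, i.e. $\lag V,\sigma\rag-I(\sigma)$ is within $\e$ of $Q(V)=\sup_\tau(\lag V,\tau\rag-I(\tau))$. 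One then performs the exponential change of measure with density proportional to $\exp(r(\theta)\lag V,\zeta_\theta\rag)$: under the tilted laws, the family $\{\zeta_\theta\}$ satisfies a large deviation upper bound with rate function $I_V(\tau)=I(\tau)-\lag V,\tau\rag+Q(V)$, which vanishes precisely at equilibrium states for $V$. The uniqueness of the equilibrium state (the hypothesis on $\VV$) forces $I_V$ to have a unique zero $\sigma_V$; exponential tightness plus the upper bound then give that the tilted laws concentrate at $\sigma_V$. Running the change of measure backwards converts this concentration into the desired lower bound at $\sigma_V$, and an approximation argument — choosing $V\in\VV$ so that $\sigma_V$ is weakly close to $\sigma$ while $I(\sigma_V)$ is close to $I(\sigma)$ — yields it at $\sigma$.

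\textbf{The main obstacle.} The delicate point is the passage from "tilted laws concentrate somewhere" to "they concentrate at a point weakly close to the prescribed $\sigma$", i.e. the claim that as $V$ ranges over $\VV$ the equilibrium states $\sigma_V$ form a weakly dense enough family, together with continuity of $V\mapsto\sigma_V$ in the relevant sense. In the compact case Kifer handles this via a minimax/Fenchel--Moreau argument exploiting that $I$ is the Legendre transform of $Q$ restricted to (a dense subspace of) $C(X)$; in our setting one must first localize to a compact $\KK_a$ using exponential tightness, approximate the relevant test function on $\KK_a$ by an element of $\VV$ (controlling the error uniformly on $\KK_a$ and crudely off it), and check that these approximations do not destroy the uniqueness-of-equilibrium input. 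The other technical nuisance is that $Q$ is defined in~\eqref{2.1} via a limit along the directed set $\Theta$ rather than a sequence, so all the "finite subcover" and "diagonal" steps must be phrased so as to respect the directed-set structure; this is routine but must be done carefully. Once these localization-to-compacta steps are in place, the remaining arguments are faithful transcriptions of~\cite{kifer-1990}.
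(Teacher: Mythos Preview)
Your upper bound and the ``$I$ is good'' argument are essentially what the paper does. The real divergence is in the lower bound, and the obstacle you flag is genuine --- but the paper resolves it by a device you do not mention, and without which your direct approach in $\PP(X)$ does not close.

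The paper does \emph{not} try to find $V\in\VV$ making a prescribed $\sigma$ a near-equilibrium state and then argue that the true equilibrium state $\sigma_V$ is weakly close to $\sigma$. Instead, after fixing $\nu_\e\in G$ with $I(\nu_\e)\le I(G)+\e$ and a compact $\KK_a\ni\nu_\e$, it chooses finitely many $V_1,\dots,V_n\in\VV$ so that the map $f_n(\mu)=(\lag V_1,\mu\rag,\dots,\lag V_n,\mu\rag)$ separates points of $\KK_a$ well enough that a small $\R^n$-ball around $x_\e:=f_n(\nu_\e)$, pulled back by $f_n$ and intersected with $\KK_a$, lies inside $G$. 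The lower bound is then reduced to a finite-dimensional LDP for $\zeta_\theta^n:=f_n(\zeta_\theta)$ (Proposition~\ref{L:2.4}). In $\R^n$ one can use standard convex analysis (Proposition~\ref{p9.2}): any point $\alpha_\e$ with $I_n(\alpha_\e)<\infty$ can be perturbed inside the open set so that the subdifferential of $I_n$ is nonempty there, i.e.\ there exists $\beta_\e$ with $Q_n(\beta_\e)=\lag\beta_\e,\alpha_\e\rag_n-I_n(\alpha_\e)$ \emph{exactly}. Tilting by $V_{\beta_\e}=\sum\beta_{\e,j}V_j\in\VV$ then makes $\alpha_\e$ an exact zero of the tilted rate $I_n^{(\beta_\e)}$, and the uniqueness-of-equilibrium hypothesis for $V_{\beta_\e}$ forces this zero to be unique. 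From there the concentration of the tilted laws at $\alpha_\e$ and the back-tilting are routine.

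The point is that in finite dimensions you can always arrange for your target to be an \emph{exact} equilibrium state (via the subdifferential), whereas in your direct $\PP(X)$ approach you only get a near-equilibrium state, and ``$I_V(\sigma)<\e$'' gives no control on the weak distance between $\sigma$ and the unique zero $\sigma_V$ of $I_V$. That is the gap. The finite-dimensional reduction is not a convenience but the mechanism that converts the density hypothesis on $\VV$ into the existence of a suitable tilt.
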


\begin{proof} 
{\it Step~1}.  Let us show that~$I$ is a good rate function. We need to prove that the set $T_a:=\{\sigma\in \PP(X): I(\sigma)\le a\}$ is compact for any $a\ge0$. Since~$I$ is lower semicontinuous, $T_a$~is closed. In view of Lemma~\ref{L:2.2}, it suffices to show that $T_a\subset \KK_b$ for some $b\geq 0$. 

Let $V_n\in C_b(X)$ be any sequence of non-negative functions converging  pointwise  to~$\varPhi $ and satisfying the inequality $V_n(u)\le \varPhi (u)$ for all $u\in X$. For example, one can take $V_n(u)=n\wedge \inf_{v\in X} (\varPhi (v)+n d(u,v))$ for any $n\ge1$ and $u\in X$. It follows from~\eqref{2.1} and~\eqref{2.4} that
$$
Q(V_n)\le\limsup_{\theta\in\Theta} \frac{1}{r(\theta)}\log \E \exp\left(r(\theta) \lag \varPhi,\zeta_\theta\rag)\right)\le c \quad\text{for all~$n\ge1$}. 
$$
Combining this with~\eqref{2.2}, we get 
$$
\lag V_n, \sigma \rag \le a+c \quad \text{for any~$\sigma\in T_a$ and~$n\ge1$.
}
$$ Using Fatou's lemma to pass to the limit as~$n\to\ty$, we get~$\lag \varPhi,\sigma\rag\le a+c$. This shows that~$T_a\subset \KK_{a+c}$. Thus, $I$~is a good rate function.

\medskip
{\it Step~2}. The proof of the upper bound~\eqref{2.6} is essentially the same as the one given by Kifer~\cite{kifer-1990} (who follows the argument in~\cite[Theorem~2.1]{Ac85}). Since~$\{\zeta_\theta\}$ is exponentially tight, in view of Lemma~1.2.8 in~\cite{ADOZ00}, it suffices to establish~\eqref{2.6} for compact sets $F \subset \PP(X)$. The case $I(F)\le 0$ is trivial, since the left-hand side of~\eqref{2.6} is non-positive. Let us assume that $0<I(F)<\ty$ and fix any $\e>0$. Given $W\in C_b(X)$, we introduce the open set
$$
\Gamma_\e(W)=\left\{\nu\in \PP(X): \lag W,\nu\rag-Q(W)>I(F)-\e\right\}.
$$
By definition of $I$, we have
$$
F\subset \{\nu\in \PP(X): I(\nu)> I(F)-\e\}=\bigcup_{W\in C_b(X)} \Gamma_\e(W). 
$$
Since $F$ is compact, there are finitely
many functions $W_1, \ldots, W_l\in C_b(X)$ such that 
$$
F\subset \bigcup_{i=1}^l \Gamma_\e(W_i).
$$
Combining this with the Chebyshev inequality, we get  
\begin{align*}
\pP\{\zeta_\theta\in F\}&\le \sum_{i=1}^l \pP\{\zeta_\theta\in \Gamma_\e(W_i)\} \nonumber\\
&= \sum_{i=1}^l \pP\{\lag W_i, \zeta_\theta\rag>Q(W_i)+I(F)-\e \} \nonumber\\
&\le \sum_{i=1}^l \exp\bigl(-r(\theta)(Q(W_i)+I(F)-\e)\bigr) \int_{\Omega_\theta} \exp\bigl(r(\theta)\lag W_i, \zeta_\theta\rag\bigr)\dd \pP.
\end{align*}
This implies that 
$$
\limsup_{\theta\in\Theta} \frac1{r(\theta)}\log \pP\{\zeta_\theta\in F\}\le -I(F)+\e.
$$
Since $\e>0$ is arbitrary, we obtain~\eqref{2.6}. 

We now assume that $I(F)=\ty$ and fix an arbitrary $N>0$. Repeating the above arguments and using the open sets
$$ 
\Gamma^N(W)=\left\{\nu\in \PP(X): \lag W,\nu\rag-Q(W)>N\right\}
$$
instead of $\Gamma_\e(W)$, we arrive at 
$$
\limsup_{\theta\in\Theta} \frac1{r(\theta)}\log \pP\{\zeta_\theta\in F\}\le -N.
$$
Since $N>0$ is arbitrary, we get~\eqref{2.6}.

\medskip
{\it Step~3}.
Let us turn to the proof of the  lower bound~\eqref{2.7}. As in~\cite{kifer-1990}, we   first prove it for an auxiliary family of $\R^n$-valued random variables~$\{\zeta_\theta^n\}$. More precisely, taking any functions~$V_1,\dots,V_n\in \VV$,  we define the mapping 
\begin{equation} \label{4.09}
f_n: \PP(X)\to \R^n, \quad  
f_n(\mu)=\bigl(\lag V_1,\mu\rag, \ldots, \lag V_n,\mu\rag\bigr),
\end{equation}
and denote $\zeta_\theta^n:= f_n(\zeta_\theta)$. The following result is a generalisation of Lemma~2.1 in~\cite{kifer-1990}; its proof is given at the end of this section.

\begin{proposition} \label{L:2.4} 
Under the conditions of Theorem~\ref{T:2.3}, for any closed set $M\subset\R^n$, 
\begin{equation}\label{2.9}
\limsup_{\theta\in\Theta}\frac1{r(\theta)}\log\pP\{ \zeta_\theta^n \in M\}\le -J_n(M),
\end{equation}
and for any open set~$U\subset \R^n$,
  \begin{equation}\label{2.10}
\liminf_{\theta\in\Theta} \frac1{r(\theta)}\log\pP\{  \zeta_\theta^n \in U\}\ge -J_n(U),
\end{equation}
where~$J_n(\Gamma)=\inf_{\sigma\in f_n^{-1}(\Gamma)} I(\sigma)$ for $\Gamma\subset \R^n.$ 
\end{proposition}

\medskip
{\it Step~4}. 
We can now complete the proof of~\eqref{2.7}. For any sequence $\{V_k\}\subset \VV$ such that $\|V_k\|_\ty=1$, define the following function on $\PP(X)\times \PP(X)$:
\begin{equation} \label{4.011}
d(\mu,\nu):=\sum_{k=1}^\ty 2^{-k}|\lag V_k,\mu\rag-\lag V_k,\nu\rag|.
\end{equation}
We claim that, given a compact subset $\KK\subset\PP(X)$, one can choose a sequence $\{V_k\}\subset \VV$ such that the restriction of~$d$ to $\KK\times\KK$ is a metric on~$\KK$ compatible with the weak topology. Indeed, $d$~is a non-negative symmetric function satisfying the triangle inequality. Therefore we only need to ensure that~$d$ separates points and the convergence for~$d$ is equivalent to the weak convergence. 

In view of the Prokhorov compactness criterion, there is an increasing sequence of compact subsets~$K_n\subset X$ such that $\mu(K_n^c)<\frac1n$ for any $\mu\in\KK$. By assumption, the restriction of the functions in~$\VV$ to~$K_n$ is dense in $C(K_n)$ for any $n\ge1$. Since~$C(K_n)$ is separable, we can find a sequence $\{V_k\}\subset \VV$ such that $\|V_k\|_\infty=1$ for any~$k\ge1$, and that the restriction to~$K_n$ of the functions in the vector span of~$\{V_k\}$ is dense for any $n\ge1$. It is straightforward to check that metric~\eqref{4.011} with this choice of~$\{V_k\}$ separates points and generates weak convergence. 
 
\smallskip
To prove~\eqref{2.7}, we consider an open set $G\subset\PP(X)$. The case $I(G)=\ty$ is trivial, so let us assume that $I(G)<\ty$ and fix any $\e>0$. Then there is $\nu_\e\in G$ such that 
\begin{equation}\label{2.20}
I(\nu_\e)\le I(G)+\e.
\end{equation}
Furthermore, for $a=I(G)+c+1+\e$, the compact set $\KK_a\subset\PP(X)$ is such that~\eqref{2.8} holds and $\nu_\e\in\KK_a$. Let~$\{V_k\}\subset\VV$ be a sequence of functions of norm~$1$ such that~\eqref{4.011} metrizes the weak convergence on~$\KK_a$. Since~$G$ is open, we can find a number~$\delta>0$ and an integer~$n\ge1$ such that if $\nu\in\KK_a$ satisfies the inequality
$$
\sum_{k=1}^n2^{-k}|\lag V_k,\nu\rag-\lag V_k,\nu_\e\rag|<\delta,
$$
then $\nu\in G$. Let us endow~$\R^n$ with the norm 
$$
\|x\|_n=\sum_{k=1}^n 2^{-k}|x_k|, \quad x=(x_1, \ldots, x_n).
$$ 
We now define a mapping~$f_n$ by~\eqref{4.09} and set $x_\e=f_n(\nu_\e)$ and $\zeta_\theta^n=f_n(\zeta_\theta)$. The construction implies that the set $f_n^{-1}\bigl(\mathring{B}_{\R^n}(x_\e,\de)\bigl)\cap\KK_a$ is contained in~$G$. It follows that
\begin{align*} 
\pP\{\zeta_\theta\in G\}&\ge\pP\{\zeta_\theta\in G\cap \KK_a\}
\ge \pP\bigl\{\zeta_\theta\in f_n^{-1}\bigl(\mathring{B}_{\R^n}(x_\e,\de)\bigl)\cap\KK_a\bigr\}\\ 
&=\pP\{\zeta_\theta^n\in \mathring{B}_{\R^n}(x_\e,\de)\}
-\IP\{\zeta_\theta\in \KK_a^c\}.
\end{align*}
Furthermore, for $0<v\le u/2$ we have $\log(u-v)\ge \log u-\log 2$. Combining these inequalities with~\eqref{2.8}, \eqref{2.10}, and~\eqref{2.20}, we see that 
\begin{align*}
\liminf_{\theta\in\Theta} \frac1{r(\theta)}\log\pP\{\zeta_\theta\in G\}
&\ge \liminf_{\theta\in\Theta} \frac1{r(\theta)}
\bigl(\log\pP\{\zeta_\theta^n\in \mathring{B}_{\R^n}(x_\e,\de)\}-\log 2\bigr)\\
&\ge -J_n(\mathring{B}_{\R^n}(x_\e,\de))\ge -I_n(x_\e)\\
&\ge -I(\nu_\e) \ge -I(G)-\e.
\end{align*} 
Since $\e>0$ is arbitrary, we obtain~\eqref{2.7}. This completes the proof of the theorem.
\end{proof}

\begin{proof}[Proof of Proposition~\ref{L:2.4}]
The upper bound~\eqref{2.9} follows immediately from~\eqref{2.6}. To prove the lower bound~\eqref{2.10}, we follow the scheme used in~\cite{kifer-1990} (see Lemma~2.1). The additional difficulty in our case  comes from the fact that the image~$L_n:=f_n(\PP(X))$ is not  necessarily a compact subset of~$\R^n$.

\smallskip
{\it Step~1: Construction of a shifted measure\/}. 
Let us fix an open set $U\subset\PP(X)$ and, given $\beta=(\beta_1, \ldots, \beta_n)\in \R^n$, define $V_\beta=\sum_{j=1}^n\beta_jV_j$ and~$Q_n(\beta)=Q(V_\beta)$. We set
$$
I_n(\alpha)=
\inf_{\sigma \in f_n^{-1}(\alpha)} I(\sigma),
$$
where the infimum over an empty set equals~$+\infty$. Direct verification shows that
\begin{align}
Q_n(\beta)&=\sup_{\alpha\in\R^n} (\lag\beta,\alpha\rag_n-I_n(\alpha)),\label{2.11}\\
J_n(U)&=\inf_{\alpha\in  U}I_n(\alpha),
\label{2.12} 
\end{align}  
where~$\lag  \cdot, \cdot\rag_n$ denotes the scalar product in~$\R^n$.
Thus, for any~$\e>0$, there is~$\alpha_\e\in U$ such that 
$I_n(\alpha_\e)<J_n(U)+\e$. Without loss of generality, we can assume that $I_n(\alpha_\e)<\infty$. In view of Proposition~\ref{p9.2}, we can choose~$\alpha_\e$ in  such a  way that, for some~$\beta_\e\in \R^n$, the following equality holds: 
\begin{equation}\label{2.13}
Q_n(\beta_\e)=\lag \beta_\e,\alpha _\e\rag_n-I_n(\alpha_\e).
\end{equation}
We now define a new measure~$\IP_\theta^{(\beta_\e)}$ on~$\Omega_\theta$ whose density with respect to~$\IP_\theta$ is equal to
$$
\frac{\dd\IP_\theta^{(\beta_\e)}}{\dd\IP_\theta}
=e^{-r(\theta)Q_n^\theta(\beta_\e)}
\exp\bigl(r(\theta)\langle V_{\beta_\e},\zeta_\theta\rangle\bigr),
$$
where we set 
$$
Q_n^\theta(\beta_\e)=\frac{1}{r(\theta)}
\log\E_\theta\exp\bigl(r(\theta)\langle V_{\beta_\e},\zeta_\theta\rangle\bigr). 
$$
Using the relation 
$\langle V_{\beta},\zeta_\theta\rangle=\langle \beta,\zeta_\theta^n\rangle_n$, 
it is straightforward to check that
\begin{equation} \label{4.017}
\E_\theta^{(\beta_\e)} g(\zeta_\theta^n)
=e^{-r(\theta)Q_n^\theta(\beta_\e)}
\E_\theta
\bigl\{\exp\bigl(r(\theta)\langle\beta_\e,\zeta_\theta^n\rangle_n\bigr)
g(\zeta_\theta^n)\bigr\}, 
\end{equation}
where $\E_\theta^{(\beta_\e)}$ denotes the expectation defined by $\IP_\theta^{(\beta_\e)}$ and $g:\R^n\to\R$ is any non-negative Borel function. 

\smallskip
{\it Step 2: Exponential tightness of~$\zeta_\theta^n$ for the shifted measure\/}. 
We claim that for any $a>0$ there is $\rho_a>0$  such that 
\begin{equation}\label{2.15}
\limsup_{\theta\in \Theta}\frac{1}{r(\theta)}\log \pP_\theta^{(\beta_\e)}
\{\zeta_\theta^n\notin {B}_{\R^n}(0, \rho_a)\}\le -a. 
\end{equation}
Indeed, taking $g(x)=I_\Gamma(x)$ with $\Gamma={B}_{\R^n}(0, \rho)^c$ in~\eqref{4.017} and using~\eqref{2.1} and the Cauchy--Schwarz inequality, we obtain
\begin{align}
&\limsup_{\theta\in\Theta}\frac{1}{r(\theta)}
\log\pP_\theta^{(\beta_\e)}\{\zeta_\theta^n\notin{B}_{\R^n}(0,\rho)^c\}\notag\\
&\quad=-\lim_{\theta\in\Theta}Q_n^\theta(\beta_\e)
+\limsup_{\theta\in\Theta}\frac{1}{r(\theta)}
\log\E_\theta
\bigl\{\exp\bigl(r(\theta)\langle\beta_\e,\zeta_\theta^n\rangle_n\bigr)
I_{{B}_{\R^n}(0, \rho)^c}(\zeta_\theta^n)\bigr\}\notag\\
&\quad \le -Q(V_{\beta_\e})+\frac12Q(2V_{\beta_\e})+\limsup_{\theta\in\Theta}\frac{1}{2r(\theta)}
\log\IP_\theta\{\zeta_\theta^n\notin{B}_{\R^n}(0,\rho)^c\}. \label{4.019}
\end{align}
Since $\{\zeta_\theta\}$ is exponentially tight for~$\IP_\theta$ and the mapping $f_n$ is continuous, it follows that~$\{\zeta_\theta^n\}$ is exponentially tight in~$\R^n$. Hence, the right-hand side of~\eqref{4.019} can be made smaller than~$-a$  by choosing a sufficiently large~$\rho>0$.

\smallskip
{\it Step 3: LD upper bound for~$\zeta_\theta^n$ for the shifted measure\/}. 
Let us define
\begin{equation}\label{2.18}
I_n^{(\beta_\e)}(\alpha)
=I_n(\alpha)-\bigl(\lag\beta_\e,\alpha\rag-Q_n(\beta_\e)\bigr). 
\end{equation} 
We claim that, for any closed subset $M\subset\PP(\R^n)$, relation~\eqref{2.9} holds  with~$\IP$ and~$J_n$ replaced by~$\IP_\theta^{(\beta_\e)}$ and~$I_n^{(\beta_\e)}$, respectively. Indeed, in view of Step~2 of the proof of Theorem~\ref{T:2.3}, the LD upper bound will be established if for any $\beta\in\R^n$ we prove the existence of the limit (cf.~\eqref{2.1})
\begin{equation} \label{4.021}
R_\e(\beta):=
\lim_{\theta\in\Theta}\frac{1}{r(\theta)}\log\E_\theta^{(\beta_\e)}
\exp\bigl(r(\theta)\langle\beta,\zeta_\theta^n\rangle_n\bigr). 
\end{equation}
Relation~\eqref{4.017} implies that the right-hand side of~\eqref{4.021} is equal to
$$
-\lim_{\theta\in\Theta}Q_n^\theta(\beta_\e)
+\lim_{\theta\in\Theta}\frac{1}{r(\theta)}\log\E_\theta
\exp\bigl(r(\theta)\langle V_\beta+V_{\beta_\e},\zeta_\theta\rangle\bigr). 
$$
Combining this with~\eqref{2.1}, we see that
\begin{equation} \label{4.022}
R_\e(\beta)=-Q(V_{\beta_\e})+Q(V_{\beta}+V_{\beta_\e})
=-Q_n(\beta_\e)+Q_n(\beta+\beta_\e). 
\end{equation}
Thus, the LD upper bound holds for~$\{\zeta_\theta^n\}$ with the rate function
$$
I_n^{(\beta_\e)}(\alpha)=\sup_{\beta\in\R^n}\bigl(\langle\alpha,\beta\rangle_n+Q_n(\beta_\e)-Q_n(\beta+\beta_\e)\bigr). 
$$
Since $I_n$ is the Legendre transform of~$Q_n$, this expression coincides with~\eqref{2.18}. 

\smallskip
{\it Step~4: Completion of the proof}. 
Since~$U$ is open and~$\alpha_\e\in U$, there is $\de>0$ such that~$B_{\R^n}(\alpha_\e,\de)\subset U$. Setting $B=\mathring{B}_{\R^n}(\alpha_\e,\de)$, taking 
$$
g(x)=\exp(-r(\theta)\langle\beta_\e,x-\alpha_\e\rangle_n)I_{B}(x)
$$ 
in~\eqref{4.017}, and using~\eqref{2.13}, we obtain
\begin{align*}
&\liminf_{\theta\in\Theta}\frac{1}{r(\theta)}\log\IP_\theta\{\zeta_\theta^n\in U\}
\ge\liminf_{\theta\in\Theta}\frac{1}{r(\theta)}
\log\IP_\theta\{\zeta_\theta^n\in B\}\\
&\quad=\liminf_{\theta\in\Theta}\frac{1}{r(\theta)}
\log\bigl(e^{r(\theta)(Q_n^\theta(\beta_\e)-\langle \alpha_\e,\beta_\e\rangle_n)}
\E_\theta^{(\beta_\e)}\{e^{-r(\theta)\langle\beta_\e,\zeta_\theta^n-\alpha_\e\rangle_n}I_B(\zeta_\theta^n)\}\bigr)\\
&\quad\ge -I_n(\alpha_\e)-|\beta_\e|\delta
+\liminf_{\theta\in\Theta}\frac{1}{r(\theta)}
\log\IP_\theta^{(\beta_\e)}\{\zeta_\theta^n\in B\}. 
\end{align*}
Since $-I_n(\alpha_\e)>-J_n(U)-\e$ and the positive numbers~$\e$ and~$\delta$ can be chosen to be arbitrarily small, the lower bound~\eqref{2.10} will be established once we prove that 
$$
\liminf_{\theta\in\Theta}\frac{1}{r(\theta)}
\log\IP_\theta^{(\beta_\e)}\{\zeta_\theta^n\in B\}=0. 
$$
To this end, it suffices to show that (recall $L_n=f_n(\PP(X))$)
$$
\lim_{\theta\in\Theta}\IP_\theta^{(\beta_\e)}\{\zeta_\theta^n\in L_n\setminus B\}=0. 
$$
Furthermore, in view of inequality~\eqref{2.15}, we only need to check that, for any   $a>0$, 
\begin{equation} \label{4.023}
\lim_{\theta\in\Theta}
\IP_\theta^{(\beta_\e)}\{\zeta_\theta^n\in B_{\R^n}(0,\rho_a)\setminus B\}=0. 
\end{equation}
Since $B_{\R^n}(0,\rho_a)\setminus B$ is closed, the LD upper bound established in Step~3 shows that
\begin{equation} \label{2.19}
\limsup_{\theta\in\Theta}\frac{1}{r(\theta)}\log
\IP_\theta^{(\beta_\e)}\{\zeta_\theta^n\in B_{\R^n}(0,\rho_a)\setminus B\}
\le -\inf_{\alpha\in B_{\R^n}(0,\rho_a)\setminus B}I_n^{(\beta_\e)}(\alpha). 
\end{equation}
The required assertion will be proved if we show that the infimum on the right-hand side is positive. To this end, it suffices to check that $I_n^{(\beta_\e)}(\alpha)\ne0$ for $\alpha\ne\alpha_\e$. 

It follows from~\eqref{2.13} and~\eqref{2.18} that~$I_n^{(\beta_\e)}(\alpha_\e)=0$.
Suppose, by contradiction,  that $I_n^{(\beta_\e)}(\alpha')=0$ for some~$\alpha'\in \R^n$, $\alpha'\neq \alpha_\e$. Since the level sets of~$I$
are compact, there exists $\sigma_\e\in f_n^{-1}(\alpha_\e)$ and
$\sigma'\in f_n^{-1}(\alpha')$ such that $I_n(\alpha_\e)=I(\sigma_\e)$
and $I_n(\alpha')=I(\sigma')$. Using~\eqref{2.11},~\eqref{2.18}, we get that  $\sigma_\e$ and $\sigma'$ are distinct equilibrium measures
 corresponding to the function~$V_{\beta_\e}\in\VV$. This contradiction proves the required property and completes the proof of Lemma~\ref{L:2.4}. 
\end{proof}

\section{Large-time asymptotics for generalised Markov semigroups}
\label{s5}
This section is devoted to the study of large-time behaviour of trajectories for a class of dual semigroups. The main result is a  generalisation of Theorem~2.1 in~\cite{JNPS-2012} to the unbounded phase space; see also~\cite{szarek-1997,KS-mpag2001,LS-2006} for some related results in the case of Markov semigroups.  

\medskip
Let $X$ be a Polish space, let~$\MM_+(X)$ be the cone of non-negative Borel measures on~$X$, and let $\{P(u,\cdot),u\in X\}\subset\MM_+(X)$ be a generalised Markov kernel. The latter means that the function $u\mapsto P(u,\cdot)$ is continuous and non-vanishing  from~$X$ to the space~$\MM_+(X)$ (with the topology of weak convergence). It follows, in particular, that for any compact subset $K\subset X$ there is $C_K>1$ such that
\begin{equation} \label{3.1}
C_K^{-1}\le P(u,X)\le C_K\quad\mbox{for $u\in K$}. 
\end{equation}
We denote by $P_k(u,\Gamma)$ the iterations of~$P(u,\Gamma)$ and by~$\PPPP_k:C_b(X)\to C_b(X)$ and $\PPPP_k^*:\MM_+(X)\to \MM_+(X)$ the corresponding semigroups; for the exact definitions, see Section~2 in~\cite{JNPS-2012}. In the case~$k=1$, we write~$\PPPP$ and~$\PPPP^*$, respectively. 

In what follows, we shall deal with a more restrictive class of kernels concentrated on the union of a countable family of compact subsets. Namely, let~$\{X_R\}_{R=1}^\infty$ be an increasing  sequence of compact subsets of~$X$ and let~$X_\infty$ be the union of~$\{X_R\}$. We shall say that ${\wwww}:X\to[1,+\infty]$ is a {\it weight function on~$X$\/} if it is measurable and its restriction to~$X_R$ is continuous for any $R\ge1$. We denote by $C_\wwww(X)$ (respectively, $L_\wwww^\infty(X)$) the space of continuous (measurable) functions $f:X\to\R$ such that $|f(u)|\le C\wwww(u)$ for all $u\in X$. Let us endow~$C_\wwww(X)$ (respectively, $L_\wwww^\infty(X)$) with the seminorm
$$
\|f\|_{L_\wwww^\infty}=\sup_{u\in X}\frac{|f(u)|}{\wwww(u)}.
$$The spaces $C_\wwww(X_\ty)$ and  $L_\wwww^\infty(X_\ty)$ are defined in a similar way. Let~$\MM_\wwww(X)$ be  the space of measures $\mu\in\MM_+(X)$ such that $\langle \wwww,\mu\rangle<\infty$ and let $\PP_\wwww(X)=\MM_\wwww(X)\cap \PP(X)$.
Note that the integral $\lag f,\mu\rag$ is well defined for any $f\in L_\wwww^\infty(X)$ and $\mu\in\MM_\wwww(X)$. 

\smallskip
We shall assume that the kernels~$P_k(u,\Gamma)$ satisfy the following hypothesis.
\begin{description}
\item[Growth condition.]
The subset~$X_\infty$ is dense in~$X$, the measures $P_{k_0}(u,\cdot)$ are concentrated on~$X_\infty$ for any $u\in X$ and an integer $k_0\ge1$, and there exists a weight function $\wwww:X\to[1,+\infty]$ and an integer~$R_0\ge1$ such that\,\footnote{The expression $(\PPPP_k\wwww)(u)$ is understood as the integral of the positive function~$\wwww$ against the positive measure $P_k(u,\cdot)$.}
\begin{equation}\label{3.2}
M:=\sup_{k\ge0}
\frac{\|\PPPP_k\wwww\|_{L_\wwww^\infty}}{\|\PPPP_k{\mathbf1}\|_{R_0}}<\infty,
\end{equation}
where $\mathbf 1$ is the function on~$X$ identically equal to~$1$, $\|\cdot\|_R$ denotes the~$L^\infty$ norm on~$X_R$, and we set $\infty/\infty=0$.  
\end{description}
Note that if $P_k$ satisfy this condition, then the operators~$\PPPP_k$ are well defined on the space~$L_\wwww^\infty(X)$ and map it  into itself.  Furthermore, inequality~\eqref{3.2} implies that 
$$
\beta_k(R):= {\|\PPPP_k\wwww\|_R} <\infty 
\quad\quad\mbox{for all $k\ge0$ and $R\ge1$}.
$$

\smallskip
To formulate the main result of this section, we need some additional definitions. Given any family $\CC\subset C_b(X)$, we denote by~$\CC^\wwww$ the vector space of those functions $f\in L_\wwww^\infty(X)$ that can be approximated, within any accuracy with respect to the norm~$\|\cdot\|_{L_\wwww^\infty}$,  by finite linear combinations of functions from~$\CC$. Note that the restriction of any function $f\in\CC^\wwww$ to~$X_R$ is continuous. A family $\CC\subset C_b(X)$ is said to be {\it determining\/} if any two measures~$\mu,\nu\in\MM_+(X)$ satisfying the relation $\lag f,\mu\rag=\lag f,\nu\rag$ for all~$f\in\CC$ coincide. A sequence of functions~$f_k: X\to\R$ is said to be {\it uniformly equicontinuous on\/}~$X_R$ if for any~$\e>0$ there is~$\de>0$ such that $|f_k(u)-f_k(v)|<\e$ for any~$u\in X_R$, $v\in B_{X_R}(u,\de)$, and~$k\ge1$. Recall that a nonzero~$\mu\in\MM_+(X)$ is called an {\it eigenvector\/} for~$\PPPP^*$ if there is $\lambda\in\R$ such that
\begin{equation} \label{3.3}
\PPPP^*\mu=\lambda \mu.
\end{equation}

\begin{theorem} \label{T:3.1}
Let~$P(u,\Gamma)$ be a generalised Markov kernel satisfying the growth condition formulated above and possessing the following properties. 
\begin{description}
\item[Uniform Feller property.]
There is a determining family~$\CC\subset C_b(X)$ and an integer~$R_0\ge1$ such that~${\mathbf1}\in\CC$ and the sequence~$\{\|\PPPP_k{\mathbf1}\|_R^{-1}\PPPP_kf,k\ge0\}$ is uniformly equicontinuous on~$X_R$ for any~$f\in \CC$ and~$R\ge R_0$. 
\item[Uniform irreducibility.] 
For sufficiently large $\rho\ge1$, any integer $R\ge 1$, and any $r>0$, there is an integer $l=l(\rho,r,R)\ge1$ and a positive number $p=p(\rho,r)$, not depending on~$R$, such that
\begin{equation} \label{3.4}
P_l(u,B_{X_\rho}(\hat u,r))\ge p\quad\mbox{for all~$u\in X_R ,\hat u\in X_\rho$}. 
\end{equation}
\end{description}
Then~$\PPPP^*$ has at most one eigenvector $\mu\in\PP_\wwww(X)$ satisfying the following condition for any~$k$: 
\begin{equation} \label{3.5}
\beta_k(R)\int_{X\setminus X_R}\wwww\,\dd\mu\to0
\quad\mbox{as $R\to\infty$}.
\end{equation}
Moreover, if such a measure~$\mu$ exists, then the corresponding eigenvalue~$\lambda$ is positive, the support of~$\mu$ coincides with~$X$, and     there is a  non-negative function $h\in L^\ty_\wwww(X)$  such that $\lag h,\mu\rag=1$, the restriction of~$h$ to~$X_R$ belongs to $C_+(X_R)$,  
\begin{equation} \label{3.6}
(\PPPP h)(u)=\lambda h(u)\quad\mbox{for $u\in X$},
\end{equation}
  and for any $f\in \CC^\wwww$ and $R\ge1$, we have 
\begin{equation}
\lambda^{-k}\PPPP_k f\to\lag f,\mu\rag h
\quad\mbox{in~$C(X_R)\cap L^1(X,\mu)$ as~$k\to\infty$}. \label{3.7}
\end{equation}
Finally, if a Borel set $B\subset X$ is such that
\begin{align} 
 \sup_{u\in B}\biggl(\int_{X\setminus X_R}\wwww(v)\,P_m(u,\dd v)\biggr) &\to0
\quad\mbox{as $R\to\infty$} \label{3.8}
\end{align} 
for some integer $m\ge1$, then for any $f\in \CC^\wwww$ we have
\begin{equation}
\lambda^{-k}\PPPP_k f\to\lag f,\mu\rag h
\quad\mbox{in~$L^\ty(B) $ as~$k\to\infty$}. \label{3.9}
\end{equation}
\end{theorem}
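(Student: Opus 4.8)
The plan is to read Theorem~\ref{T:3.1} as an abstract Perron--Frobenius statement for the positive operators $\{\PPPP_k\}$ and to compensate for the non‑compactness of $X$ by working on the $\sigma$‑compact set $X_\infty$ and using the growth condition to control what happens ``at infinity''. First I would dispose of the easy structural facts. Since $P_{k_0}(u,\cdot)$ is concentrated on $X_\infty$ and $(\PPPP^*)^{k_0}\mu=\lambda^{k_0}\mu$, the eigenmeasure $\mu$ is concentrated on $X_\infty=\bigcup_R X_R$, so $\mu(X_R)>0$ for some $R$. Testing $\PPPP^*\mu=\lambda\mu$ against $\mathbf 1$ gives $\lambda=\lag P(\cdot,X),\mu\rag$, which is positive because the kernel is non‑vanishing, so $P(\cdot,X)>0$. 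Uniform irreducibility~\eqref{3.4} shows that $\mu$ charges every ball $\mathring{B}_{X_\rho}(\hat u,r)$, hence $\supp\mu=X$. Next I set $h_k:=\lambda^{-k}\PPPP_k\mathbf 1=\lambda^{-1}\PPPP h_{k-1}\in C_b(X)$; since $\lag\PPPP h_{k-1},\mu\rag=\lambda\lag h_{k-1},\mu\rag$, we get the normalisation $\lag h_k,\mu\rag\equiv1$.

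The core of the argument is the convergence of $\{h_k\}$. From $\mathbf 1\le\wwww$ and the growth condition, $h_k\le\lambda^{-k}\PPPP_k\wwww\le M\bigl(\lambda^{-k}\|\PPPP_k\mathbf 1\|_{R_0}\bigr)\wwww$; combining this with $\lag h_k,\mu\rag=1$ and the tail bound~\eqref{3.5} one checks that $\lambda^{-k}\|\PPPP_k\mathbf 1\|_{R_0}$ stays bounded, so $h_k\le C\wwww$ uniformly in $k$, and uniform irreducibility gives a matching lower bound $h_k\ge c_R>0$ on $X_R$ for $k$ large. The uniform Feller property (applied to $f=\mathbf 1$) together with these bounds makes $\{h_k|_{X_R}\}$ uniformly bounded and equicontinuous, so by Arzel\`a--Ascoli and a diagonal extraction a subsequence converges in $C(X_R)$ for every $R$ to a limit $h$ with $0\le h\le C\wwww$, $h|_{X_R}\in C_+(X_R)$, $\PPPP h=\lambda h$ (pass to the limit in $h_{k+1}=\lambda^{-1}\PPPP h_k$ by dominated convergence, using $\PPPP\wwww\le M'\wwww$ from the growth condition, after extending $h$ to $X$), and $\lag h,\mu\rag=1$ (Fatou plus~\eqref{3.5}). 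The same scheme applied to a general $f\in\CC$ produces limit points $g$ of $\{\lambda^{-k}\PPPP_k f\}$ with $\PPPP g=\lambda g$, $|g|\le C_f\wwww$, and $\lag g,\mu\rag=\lag f,\mu\rag$.

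To upgrade subsequential convergence to genuine convergence I need the one‑dimensionality of the $\lambda$‑eigenspace $\{\varphi\in L_\wwww^\infty(X):\PPPP\varphi=\lambda\varphi\}$ (restricted to eigenfunctions inheriting the tail control of the $h_k$): granting this, every limit point above equals $\lag f,\mu\rag h$, so $\lambda^{-k}\PPPP_k f$ converges in each $C(X_R)$, hence in $L^1(X,\mu)$ by dominated convergence via~\eqref{3.5}, giving~\eqref{3.7} for $f\in\CC$; the extension to $f\in\CC^\wwww$ then follows by density since $\lambda^{-k}\PPPP_k$ is bounded on $L_\wwww^\infty(X)$ uniformly in $k$. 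For the one‑dimensionality I would compare a general eigenfunction $\varphi$ with $h$ through $c_*:=\sup\{t\ge0:\varphi\le t\,h\text{ on }X_\infty\}$, the key being that iterating~\eqref{3.4} produces a Harnack‑type inequality controlling $\varphi$ on all of $X_\infty$ by $\sup_{X_{R_0}}\varphi$, while the weight absorbs the portion of $P_l(u,\cdot)$ lying outside $X_\rho$, so that the comparison effectively takes place on the compact set $X_{R_0}$ where $h$ is bounded below; if $\psi:=c_*h-\varphi\not\equiv0$ then $\psi\ge0$, $\PPPP\psi=\lambda\psi$, and uniform irreducibility forces $\psi$ to be quantitatively positive on $X_{R_0}$, contradicting maximality of $c_*$. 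I expect this Harnack/absorption step — making all estimates \emph{uniform} as $u$ ranges over the non‑compact $X_\infty$, which is precisely where the growth condition~\eqref{3.2} and~\eqref{3.5} are indispensable, in contrast with the compact setting of~\cite{JNPS-2012} — to be the main obstacle.

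It remains to record the uniqueness of $\mu$ and the $L^\infty(B)$ convergence. Uniqueness is now a consequence of~\eqref{3.7}: if $\mu_1,\mu_2$ are two such eigenvectors with eigenvalues $\lambda_1,\lambda_2$, then $\lambda_i^{-k}\PPPP_k\mathbf 1\to h_i$ in $C(X_{R_0})$ with $h_i\not\equiv0$, which forces $\lambda_1=\lambda_2=:\lambda$, then $h_1=h_2=:h$, and then $\lag f,\mu_1\rag h=\lag f,\mu_2\rag h$ for all $f\in\CC$, so $\mu_1=\mu_2$ because $\CC$ is determining. Finally, for $B$ satisfying~\eqref{3.8} with exponent $m$, write $\lambda^{-k}\PPPP_k f=\lambda^{-m}\PPPP_m\bigl(\lambda^{-(k-m)}\PPPP_{k-m}f\bigr)$ and split the integral against $P_m(u,\cdot)$ over $X_R$ and $X\setminus X_R$: on $X_R$ one uses the already‑established uniform convergence $\lambda^{-(k-m)}\PPPP_{k-m}f\to\lag f,\mu\rag h$ in $C(X_R)$, while the complementary part is bounded by $C_f\sup_{u\in B}\int_{X\setminus X_R}\wwww\,P_m(u,\cdot)$, which tends to $0$ as $R\to\infty$ uniformly in $u\in B$ by~\eqref{3.8}; since $\PPPP_m h=\lambda^m h$, letting $k\to\infty$ and then $R\to\infty$ yields~\eqref{3.9}.
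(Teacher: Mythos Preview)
Your framework (positivity of~$\lambda$, full support of~$\mu$, Arzel\`a--Ascoli compactness on each~$X_R$, uniqueness of~$\mu$ a posteriori from~\eqref{3.7}, and the proof of~\eqref{3.9} via the splitting over~$X_R$ and~$X\setminus X_R$) matches the paper's. The two substantive departures are the construction of~$h$ and the mechanism for convergence, and both carry gaps.

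For the eigenfunction, you take subsequential limits of $h_k=\lambda^{-k}\PPPP_k\mathbf1$ and propose to ``pass to the limit in $h_{k+1}=\lambda^{-1}\PPPP h_k$''. But from $h_{k_j}\to h$ locally one only gets $h_{k_j+1}\to\lambda^{-1}\PPPP h$, and nothing forces $h_{k_j+1}\to h$; so you do not yet know that the limit is an eigenfunction, and you cannot invoke one-dimensionality of the eigenspace before you have a single eigenfunction in hand. The paper avoids this circularity by building~$h$ from the Ces\`aro means $k^{-1}\sum_{l=0}^{k-1}\PPPP_l\mathbf1$ (after normalising to $\lambda=1$): then $\PPPP h_k-h_k=k^{-1}(\PPPP_k\mathbf1-\mathbf1)$, and once one shows $\sup_k\|\PPPP_k\mathbf1\|_R<\infty$ (by a short contradiction argument using $\supp\mu=X$), any subsequential limit automatically satisfies $\PPPP h=h$.

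For convergence you aim at one-dimensionality of the $\lambda$-eigenspace in~$L^\infty_\wwww$ via a Harnack comparison $c_*=\inf\{t:\varphi\le t\,h\text{ on }X_\infty\}$ (your formula has a sup/inf slip). The obstacle you flag is genuine and, as far as I can see, not easily removable: you need $\sup_{X_\infty}\varphi/h<\infty$, but the a priori bound is only $\varphi\le C\wwww$, while the lower bound on~$h$ from irreducibility is merely uniform on each~$X_R$, so $\varphi/h$ need not be globally bounded. The paper bypasses eigenspace uniqueness entirely. It sets $g=f-\langle f,\mu\rangle h$, $g_k=\PPPP_k g$, and works in~$L^1(X,\mu)$: invariance of~$\mu$ gives $|g_{k}|_\mu$ non-increasing, and a Doeblin-type estimate exploiting~\eqref{3.4} (both $g_k^+$ and $g_k^-$ are uniformly bounded below on small balls in~$X_\rho$, so applying $\PPPP_l$ subtracts a fixed fraction of $|g_k|_\mu$) yields
\[
|g_{k+l}|_\mu\le a(R)\,|g_k|_\mu+\varepsilon(R),\qquad a(R)<1,
\]
with the error $\varepsilon(R)$ controlled precisely by~\eqref{3.5}; iterating and sending $R\to\infty$ gives $|g_{k}|_\mu\to0$, and the uniform equicontinuity then upgrades this to convergence in~$C(X_R)$. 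This $L^1(\mu)$-contraction route is the missing idea in your sketch: it replaces the global pointwise comparison you were attempting by an integral one against the invariant measure, where the tail is tame by hypothesis.
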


\begin{proof} 
We begin with a number of simple remarks. Let~$\lambda\in\R$ be an eigenvalue for~$\PPPP^*$ corresponding to an eigenvector $\mu\in \PP_\wwww(X)$. Since $P_{k_0}(u,\cdot)$ is concentrated on~$X_\infty$, it follows from~\eqref{3.3} that 
\begin{align}
\lambda^{k_0}&=\int_XP_{k_0}(u,X)\mu(du), \label{3.10}\\
\lambda^{k_0}\mu(X\setminus X_\infty)&=\int_XP_{k_0}(u,X\setminus X_\infty)\mu(\dd u)=0.
\label{3.010}
\end{align}
The lower bound in~\eqref{3.1} implies that the right-hand side of~\eqref{3.10} is positive, and it follows from~\eqref{3.010} that~$\mu$ is concentrated on~$X_\infty$. Furthermore, for any $\hat u\in X_\ty$ and $r>0$, the relation $\PPPP_l^*\mu=\lambda^l\mu$ and the uniform irreducibility imply that 
\begin{align*}
\mu\bigl(B(\hat u,r)\bigr)
&=\lambda^{-l}\int_XP_l\bigl(u,B(\hat u,r)\bigr)\mu(\dd u)\\
&\ge \lambda^{-l}\int_{X_{R}}P_l\bigl(u,B_{X_R}(\hat u,r)\bigr)\mu(\dd u)
\ge \lambda^{-l} p\,\mu(X_{R})>0,
\end{align*}
where $R\ge1$ is such that $\mu(X_R)>0$ and $\hat u \in X_R$, and~$l=l(R,r,R)$, $p=p(\rho,r)$ are the constants in~\eqref{3.4}. Thus, 
\begin{equation} \label{3.11}
\mu(B(\hat u,r))>0\quad\mbox{for any $\hat u\in X_\ty$ and $r>0$}.
\end{equation}
Since~$X_\infty$ is dense in~$X$, the support of~$\mu$ must coincide with~$X$. Finally, let us show that if the existence of~$h$ is established, then the uniqueness of~$\mu$   follows immediately from the normalisation conditions and convergence~\eqref{3.7}. Indeed, suppose that~$\tilde\mu\in\PP_\wwww(X)$ is an eigenvector of~$\PPPP^*$ with an eigenvalue~$\tilde\lambda>0$. The above argument { concerning} the support of an eigenvector for~$\PPPP^*$ applies also to~$\tilde\mu$, so that $\supp\tilde\mu=X$. Using convergence~\eqref{3.7} in~$C(X_R)$ and inequality~\eqref{3.15} established below, we easily prove that
$$
\delta_k:=
\bbar\lambda^{-k}\PPPP_kf-\langle f,\mu\rangle h\bbar_{\tilde\mu}\to0
\quad\mbox{as $k\to\infty$},
$$
where $f\in\CC$ is arbitrary. On the other hand, we can write
$$
\delta_k
\ge \biggl|\int_X\bigl(\lambda^{-k}\PPPP_kf-\langle f,\mu\rangle h\bigr)\,\dd\tilde\mu\biggr|
=\bigl|\bigl(\tfrac{\tilde\lambda}{\lambda}\bigr)^k\langle f,\tilde\mu\rangle-\langle f,\mu\rangle \lag h, \tilde\mu \rag\bigr|.
$$
Comparing the two relations above in which $f=\mathbf1$ and using the fact that $\lag h,\tilde \mu\rag\neq0$, we see that $\tilde\lambda=\lambda$ and $\lag h, \tilde\mu \rag=1$. It follows that $\langle f,\tilde\mu\rangle=\langle f,\mu\rangle$ for any $f\in\CC$. Since~$\CC$ is a determining family, we conclude that $\mu=\tilde\mu$. 

\smallskip
We thus assume that~$\PPPP^*$ has an eigenvector $\mu\in\PP_\wwww(X)$ satisfying~\eqref{3.5} and prove that there is~$h\in L^\infty_\wwww(X)$ such that the restriction of~$h$ to~$X_\infty$ is positive, $\langle h,\mu\rangle=1$, and that~\eqref{3.7} holds. The proof of these facts is split into several steps. In what follows, replacing~$P(u,\Gamma)$ by~$\lambda^{-1}P(u,\Gamma)$ if necessary, we may assume  that~$\lambda=1$. 

\smallskip
{\it Step~1}. 
Let us prove that  for any~$f\in\CC$ and~$R\ge 1$, we have
\begin{equation} \label{3.12}
 \|\PPPP_{k}f\|_R\le C_{f,R} \quad\mbox{for all~$k\ge1$},
\end{equation}
where~$C_{f,R}$ is a constant not depending on~$k$. Clearly, it suffices to prove that 
\begin{equation} \label{3.13}
 \|\PPPP_{k} \mathbf1    \|_R\le C_{\mathbf1,R} \quad\mbox{for all~$k\ge1$}.
\end{equation}
 Let us suppose that there is a sequence~$k_j\to\infty$ and  an integer $R\ge1$ such that 
\begin{equation} \label{3.14}
\|\PPPP_{k_j}\mathbf1\|_R \to+\infty\quad\mbox{as~$j\to\infty$}.
\end{equation}
In view of the uniform Feller property, we can assume that 
$$
\|\PPPP_{k_j}\mathbf1\|_{R}^{-1}\PPPP_{k_j}\mathbf1\to g
\quad\mbox{in~$C(X_R)$ as~$j\to\infty$},
$$
where~$g\in C(X_R)$ is a non-negative function whose norm is equal to~$1$. We now write
$$
\int_{X_R} g(u)\mu(\dd u)
=\lim_{j\to\infty} \|\PPPP_{k_j}\mathbf1\|_{R}^{-1}
\int_{X_R}\PPPP_{k_j}\mathbf1(u)\mu(\dd u)
\le \lim_{j\to\infty} \|\PPPP_{k_j}\mathbf1\|_{R}^{-1}\langle \mathbf1,\mu\rangle=0. 
$$
On the other hand, since $\mu(X\setminus X_\infty)=0$ and $g\ge0$ is a non-zero continuous function on~$X_R$, the left-hand side of this relation is positive for sufficiently large~$R$. Since the validity of convergence~\eqref{3.14} for some integer~$R=R_0\ge1$ implies that it is true for any $R>R_0$, we arrive at a contradiction. We have thus established~\eqref{3.12}. 

\smallskip
{\it Step~2}. 
Let us prove the existence of a non-negative function $h\in L_\wwww^\infty(X)$  satisfying~\eqref{3.6} with $\lambda=1$. As was mentioned after the formulation of the growth condition, the operators~$\PPPP_k:C_\wwww(X)\to C_\wwww(X)$ are well defined. Moreover, the fact that~$P(u,\cdot)$ is concentrated on~$X_\infty$ for any~$u\in X$ and inequalities~\eqref{3.2} and~\eqref{3.12} with $f=\mathbf1$ imply that 
\begin{equation} \label{3.15}
\sup_{k\ge0}\|\PPPP_kf\|_{L_\wwww^\infty(X)}\le M_1\,\|f\|_{L_\wwww^\infty(X_\ty)}
\quad\mbox{for any $f\in L_\wwww^\infty(X_\ty)$},
\end{equation}
where $M_1=MC_{{\mathbf1},R_0}$.   The uniform Feller property and inequality~\eqref{3.12} imply that the sequence~$\{\PPPP_k\mathbf1\}$ is uniformly equicontinuous on~$X_R$ for any~$R\ge1$. It follows that so is the sequence 
$$
h_k:=\frac1k\sum_{l=0}^{k-1}\PPPP_l\mathbf1. 
$$
Applying the diagonal process, we construct a function~$h:X_\infty\to\R_+$ and a sequence $k_j\to\infty$ such that 
\begin{equation}  \label{3.16}
\|h_{k_j}-h\|_R\to0\quad\mbox{as $j\to\infty$ for any $R\ge1$}. 
\end{equation} 
This yields that  the restriction of~$h$  to~$X_R$ is continuous for any~$R\ge1$. Furthermore, it follows from~\eqref{3.15} that \begin{equation} \label{3.17}
\Big\|\frac h\wwww\Big\|_R\le \sup_{k\ge0} \|h_k\|_{L_\wwww^\infty}\le M_1\|\mathbf1\|_{L_\wwww^\infty}\quad
\mbox{for all $R\ge1$}, 
\end{equation}
and therefore  $h\in L_\wwww^\infty(X_\ty)$. We claim that the mean value of~$h$ with respect to~$\mu$ is equal to~$1$, and that  equality~\eqref{3.6} holds for any~$u\in X_\ty$. Indeed, relations~\eqref{3.3} and~\eqref{3.17} and the Lebesgue theorem on dominated convergence imply that 
$$
\langle h,\mu\rangle=\lim_{j\to\infty}\langle h_{k_j},\mu\rangle 
=\langle \mathbf1,\mu\rangle=1.
$$
In particular, $h$ is a non-zero function. To prove~\eqref{3.6} for~$u\in X_\ty$, note that
$$
(\PPPP h_{k_j})(u)=\int_XP(u,\dd v)h_{k_j}(v)
=h_{k_j}(u)+\frac{1}{k_j}\bigl(\PPPP_{k_j}\mathbf1(u)-\mathbf1(u)\bigr). 
$$
The Lebesgue theorem  combined with~\eqref{3.17} implies that, for $u\in X_\infty$,  the left-hand side of this relation converges to~$(\PPPP h)(u)$, while~\eqref{3.15} and~\eqref{3.16} show that its right-hand side converges to~$h(u)$. 

For any $u\in X$, the function  $(\PPPP  h)(u)$ is well defined and, by~\eqref{3.15}, satisfies the inequality
$$ 
\| \PPPP   h\|_{L^\ty_\wwww} \le M_1 \|h\|_{L^\ty_\wwww(X_\ty)}.
$$ 
Thus, defining $h(u):= \PPPP   h (u)$  for any $u\in X\backslash X_\ty$,  we obtain a non-negative function $h\in L^\ty_\wwww(X)$ satisfying~\eqref{3.6} for any~$u\in X$.

\smallskip
{\it Step~3}. 
Let us prove that~$h(u)>0$ for all~$u\in X_\infty$. Indeed, let~$R\ge1$ be such that $u\in X_R$. Since $\langle h,\mu\rangle=1$, there is an integer $\rho\ge1$ and a point $\hat u\in X_\rho$ such that $h(\hat u)>0$. By continuity, there is~$r>0$ such that $h(v)\ge r$ for any~$v\in B_{X_\rho}(\hat u,r)$. Combining this with~\eqref{3.6}, for any~$u\in X_R$ we derive
\begin{align}\label{3.18}
h(u)=\PPPP_lh(u)
&=\int_XP_l(u,dv)h(v)\ge\int_{B_{X_\rho}(\hat u,r)}P_l(u,dv)h(v)\nonumber\\
&\ge rP_l\bigl(u,B_{X_\rho}(\hat u,r)\bigr)\ge rp>0,
\end{align}
where we assumed with no loss of generality that~$\rho$ is so large that inequality~\eqref{3.4} holds, and~$l=l(\rho,r,R)\ge1$ and~$p=p(\rho,r)$ are the constants appearing in this inequality. 

\smallskip
{\it Step~4.}
To prove convergence~\eqref{3.7}, we first show that it suffices to establish it for functions in~$\CC$. Indeed, for any $f\in \CC^\wwww$ and any~$\e>0$ one can find a function $g\in C_b(X)$ which is a finite linear combination of elements of~$\CC$ such that $\|f-g\|_{L_\wwww^\infty}<\e$. 
Combining this with~\eqref{3.15}, we now write
$$
\bigl\|\PPPP_kf-\langle f,\mu\rangle h\bigr\|_R
\le \bigl\|\PPPP_kg-\langle g,\mu\rangle h\bigr\|_R
+\e\bigl(M_1\|\wwww\|_R+\bbar\wwww\bbar_\mu\|h\|_R\bigr). 
$$
Since~$\e>0$ is arbitrary, the second term on the right-hand side of this relation can be made arbitrary small, while the first one goes to zero as $k\to\infty$. A similar argument shows that 
$$
\bbar\PPPP_kf-\langle f,\mu\rangle h\bbar_\mu\to0
\quad\mbox{as $k\to\infty$}.
$$

\smallskip
{\it Step~5.}
We now prove~\eqref{3.7} for $f\in\CC$. Setting $g=f-\lag f,\mu\rag h$ and $g_k=\PPPP_kg$, we need to prove that $g_k\to0$ in~$C(X_R)$ for any~$R\ge1$. Since~$\{g_k,k\ge0\}$ is uniformly equicontinuous on~$X_R$, the required assertion will be established if we prove that 
\begin{equation} \label{3.19}
\bbar g_{k}\bbar_\mu\to0\quad\mbox{as~$k\to\infty$}.
\end{equation}
For any~$\varphi \in L^\infty_\wwww(X)$, we have
$$
\bbar\PPPP \varphi\bbar_\mu=\langle|\PPPP \varphi|,\mu\rangle\le\langle\PPPP |\varphi|,\mu\rangle
=\langle|\varphi|,\mu\rangle=\bbar\varphi\bbar_\mu. 
$$
Thus, the sequence~$\{\bbar g_k\bbar_\mu\}$ is non-increasing, and it suffices to show that there is a sequence of integers~$k_j$ such that~$\bbar g_{k_j}\bbar_\mu\to 0$  as~$j\to\ty$. 

Let us assume that for any integer~$\rho\ge1$ there is a sequence~$k_j=k_j(\rho)\to\ty$ such that 
\begin{equation}\label{3.20}
\|g_{k_j}^+\|_\rho\to0 \quad\mbox{as~$j\to\infty$}.
\end{equation}  
Passing to a subsequence, we can assume that~\eqref{3.20} holds for any $\rho\ge1$ and a universal sequence~$\{k_j\}$. Then, in view of~\eqref{3.15}, we have
$$
\bbar g_{k_j}^+\bbar_\mu=
\biggl(\int_{X\setminus X_\rho}+\int_{X_\rho}\biggr) g_{k_j}^+\,\dd\mu
\le M_1\|g\|_{L_\wwww^\infty}\int_{X\setminus X_\rho}\wwww\,\dd\mu
+\|g_{k_j}^+\|_\rho. 
$$
Combining this with~\eqref{3.20} and the inequality $\langle\wwww,\mu\rangle<\infty$ (which follows immediately from~\eqref{3.5}), we see that $\bbar g_{k_j}^+\bbar_\mu\to0$ as $j\to\infty$. 
Using the relation $\langle g_k,\mu\rangle=\langle g_k^+,\mu\rangle-\langle g_k^-,\mu\rangle=0$, we derive
\begin{equation*}
\bbar g_{k_j}\bbar_\mu=\langle g_k^+,\mu\rangle+\langle g_k^-,\mu\rangle
=2\,\bbar g_{k_j}^+\bbar_\mu\to0\quad\mbox{as~$j\to\infty$}.
\end{equation*}
A similar argument shows that~\eqref{3.19} holds as soon as for any $\rho\ge1$ there is a sequence~$k_j(\rho)\to\ty$ such that 
\begin{equation}\label{3.21}
\|g_{k_j}^-\|_\rho\to0 \quad\mbox{as~$j\to\infty$}.
\end{equation}
In the next step, we prove that~\eqref{3.20} and~\eqref{3.21}   are necessarily satisfied for all~$\rho\ge1$. 

\smallskip
{\it Step~6.}
Assume, by contradiction, that for an integer~$\rho\ge1$ there is no subsequence~$k_j\to\ty$ satisfying~\eqref{3.20} and~\eqref{3.21}. Then one can find sequences~$\{u_k^\pm\}\subset X_\rho$ and a number~$\alpha>0$ such that
$$
g_k^+(u_k^+)=\max_{u\in X_\rho}  g_k^+(u)\ge\alpha, \quad
g_k^-(u_k^-)=\max_{u\in X_\rho}  g_k^-(u)\ge\alpha.
$$
Let us show that, for any $R\ge1$, we have
\begin{equation}\label{3.22}
\PPPP_l  g_k^\pm(u)-A(R)^{-1} \bbar g_k^\pm\bbar_\mu
+A(R)^{-1} \gamma(R)\ge0 \quad\mbox{for all~$u\in X_R$},
\end{equation}
where $l\ge1$ is the integer arising in the uniform irreducibility condition, 
$$
A(R)=2(p\alpha)^{-1}M_1\|g\|_{L_\wwww^\infty}\beta_l(R), \quad
\gamma(R)=M_1\|g\|_{L_\wwww^\infty}
\int_{X\setminus X_R}\wwww\,\dd\mu,
$$
and~$p$ is the constant in~\eqref{3.4}. 
Indeed, since~$g_k^\pm$ are uniformly equicontinuous, we can find~$r>0$ not depending on~$k$ such that
\begin{equation} \label{3.23}
g_k^\pm(u)\ge  \alpha/2   \quad
\mbox{for all~$u\in B_{X_\rho}(u_k^\pm,r)$}. 
\end{equation} 
Using~\eqref{3.15} and~\eqref{3.23}, we obtain
\begin{align*}
\sup_{u\in X_R}\PPPP_l  g_k^\pm(u)
&\le \|g_k^\pm\|_{L_\wwww^\infty}\sup_{u\in X_R}(\PPPP_l\wwww)(u)
\le M_1\|g\|_{L_\wwww^\infty}\beta_l(R),\\
\inf_{u\in X_R}\PPPP_l  g_k^\pm(u)
&\ge \inf_{u\in X_R}\int_{B_{X_\rho}(u_k^\pm,r)}P_l(u,\dd v)  g_k^\pm(v)
\ge \frac{p_l \alpha}{2}\,.
\end{align*}
It follows that 
$$
\sup_{u\in X_R}\PPPP_l  g_k^\pm(u)\le   A(R)\inf_{u\in X_R}\PPPP_l  g_k^\pm(u).
$$
Using again~\eqref{3.15} and the invariance of~$\mu$, we derive
\begin{align*}
\bbar g_k^\pm\bbar_\mu = \bbar \PPPP_l g_k^\pm\bbar_\mu 
&= \int_{X_R} \PPPP_l  g_k^\pm \dd\mu+ \int_{X\backslash X_R} \PPPP_l  g_k^\pm \dd\mu\\
&\le A(R) \inf_{u\in X_R}\PPPP_l  g_k^\pm(u)+ \gamma(R).
\end{align*} 
This inequality implies~\eqref{3.22}. 

We now estimate $\bbar g_{k+l}\bbar_\mu=\bbar\PPPP_lg_k\bbar_\mu$. To this end, we fix an integer~$R\ge1$ and write
\begin{align}
\bbar g_{k+l}\bbar_\mu
&= \int_{X_R}|\PPPP_l  g_k| \mu(\dd u)
+\int_{X\backslash X_R}|\PPPP_l  g_k| \mu(\dd u) \notag\\
&=\int_{X_R}\bigl|(\PPPP_l  g_k^+-A(R)^{-1} \bbar g_k^+\bbar_\mu)-
(\PPPP_l  g_k^--A(R)^{-1} \bbar g_k^-\bbar_\mu)\bigr|\,\dd\mu+\gamma(R) \notag\\
&\le \int_{X_R}\Bigl(\bigl|\PPPP_l  g_k^+-A(R)^{-1} \bbar g_k^+\bbar_\mu\bigr|+\bigl|\PPPP_l  g_k^-
-A(R)^{-1} \bbar g_k^-\bbar_\mu\bigr|\Bigr)\,\dd\mu+\gamma(R).
\label{3.24} 
\end{align}
It follows from~\eqref{3.22} that 
$$
\int_{X_R}\bigl|\PPPP_l  g_k^\pm-A(R)^{-1} \bbar g_k^\pm\bbar_\mu\bigr|\,\dd\mu \le  \int_{X_R}
\bigl(\PPPP_l  g_k^\pm-A(R)^{-1} \bbar g_k^\pm\bbar_\mu\bigr)\dd\mu 
+2A(R)^{-1}\gamma(R). 
$$
Combining this with~\eqref{3.24} and using the invariance of~$\mu$, we obtain
\begin{align}
\bbar g_{k+l}\bbar_\mu
&\le\int_{X_R}\PPPP_l( g_k^++  g_k^-)\,\dd\mu
-A(R)^{-1} \mu(X_R)\bigl(\bbar g_k^+\bbar_\mu+\bbar g_k^-\bbar_\mu\bigr)+\e(R)\notag\\
&\le a(R)\bbar g_k\bbar_\mu+ \e(R),
\label{3.25}
\end{align}
where we set $\e(R)=(1+4A(R)^{-1})\gamma(R)$ and $a(R)=1-A(R)^{-1} \mu(X_R)$. Let~$R$ be so large that $\mu(X_R)\ge1/2$ and $a(R)<1$. Then iteration of~\eqref{3.25} results in 
\begin{align*}
\bbar g_{jl}\bbar_\mu
&\le a(R)^j\bbar g\bbar_\mu
+\e(R)\sum_{n=0}^{j-1} a(R)^n
\le a(R)^j\bbar g\bbar_\mu+\e(R)\,\bigl(1-a(R)\bigr)^{-1}\\
&\le a(R)^j\bbar g\bbar_\mu+ (2A(R)+8)\gamma(R)
\le a(R)^j\bbar g\bbar_\mu
+C(\beta_l(R)+1)\int_{X\setminus X_R}\wwww\,\dd\mu,
\end{align*}
where $C=C(l,M_1,\alpha,g)>0$ does not depend on~$R$. It follows from~\eqref{3.5} that the right-hand side of this inequality can be made arbitrarily small by an appropriate choice of~$R$ and~$j$. 

We have thus proved that convergence~\eqref{3.19} holds along a subsequence $k=k_j$. As was explained in Step~5, this implies that $g_k\to0$ in~$C(X_R)$ for any $R>0$, and we arrive at a contradiction.

\smallskip
{\it Step~7}. It remains to prove   convergence~\eqref{3.9} under condition~\eqref{3.8}. 
 For any  $f\in\CC^\wwww$, we have
\begin{align*}
 |(\PPPP_{k+m}  f  )(u)-\lag f,\mu\rag h(u)|
 &\le   \int_{X}| (\PPPP_k f)(v)-\lag f,\mu\rag h(v)|  P_m(u,\dd v) \\
 &= \int_{X_R} + \int_{X\backslash X_R} =: I_k(R,u)+J_k(R,u).
 \end{align*}
By~\eqref{3.7}, for any $R\ge1$, we have $\sup_{u\in B} I_k(R,u)\to0 $ as $k\to \ty$.   Furthermore, it follows from~\eqref{3.8} and~\eqref{3.15} that
$$
\sup_{u\in B}J_k(R,u)\le C \sup_{u\in B}\biggl(\int_{X\setminus X_R}\wwww(v)\,P_m(u,\dd v)\biggr) \to0\quad\mbox{as $R\to\infty$},
$$
where $C=M_1 \|f\|_{L^\ty_\wwww}+|\lag f,\mu\rag|\|h\|_{L^\ty_\wwww}$. This completes the proof of Theorem~\ref{T:3.1}. 
\end{proof}

\section{Proof of Theorem~\ref{t1.2}}
\label{s8}

In this section, we prove a number of properties of the random dynamical system~\eqref{1.015} and, taking for granted the uniform Feller property, establish the main result of this paper. We shall always assume, often without further stipulation, that the hypotheses of Theorem~\ref{t1.2} are fulfilled. 

\subsection{Hyper-exponential recurrence}
\label{s8.0}
Let~$\ttau_U(R)$ be the first hitting time of the set~$B_U(R)\times\cdots\times B_U(R)\,(\mbox{$\ell$ times})$ for the Markov process~$(\uuu_k,\IP_\uuu)$ associated with~\eqref{1.015}:
$$
\ttau_U(R)=\min\{k\ge0:u_k^1,\dots,u_k^\ell\in B_U(R)\}. 
$$
For any integer $m\ge1$, we set $\varPhi_m(u)=\varPhi(u)^m$, where~$\varPhi$ is the function entering Condition~(A). 

\begin{proposition} \label{p8.1}
Under the hypotheses of Theorem~\ref{t1.2}, for any $\gamma>0$ there are positive numbers $R$, $C$, and~$m$ such that 
\begin{equation} \label{8.1}
\E_\uuu\exp\bigl(\gamma \ttau_U(R)\bigr)\le C\,\varPhi_m(u)
\quad\mbox{for any $\uuu\in H^\ell$}, 
\end{equation}
where $\uuu=[u^1,\dots,u^\ell]\in H^\ell$ and $u=u^\ell$.
\end{proposition}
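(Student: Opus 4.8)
The plan is to reduce the statement, using the delay structure of the $\ell$-process, to a recurrence estimate for the random dynamical system~\eqref{1.1}, and then to combine a Foster--Lyapunov drift inequality for $\varPhi_m$ with a \emph{reachability} bound that exploits the regularising effect of $S$ recorded in Condition~(B). By~\eqref{1.017}, for $k\ge\ell-1$ the trajectory of~\eqref{1.015} issued from $\uuu=[u^1,\dots,u^\ell]$ satisfies $\uuu_k=[u_{k-\ell+1},\dots,u_k]$, where $\{u_n\}$ solves~\eqref{1.1} with $u_0=u=u^\ell$. Hence on $\{\ttau_U(R)\ge\ell-1\}$ one has $\ttau_U(R)=n_\star$, where $n_\star$ is the first $n\ge\ell-1$ with $u_{n-\ell+1},\dots,u_n\in B_U(R)$; this depends only on $u^\ell$ and the kicks, so its $\IP_\uuu$-law coincides with its $\IP_u$-law. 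Since $\varPhi_m\ge1$ and $e^{\gamma\ttau_U(R)}\le e^{\gamma(\ell-2)}$ on $\{\ttau_U(R)<\ell-1\}$, it then suffices to prove $\E_u e^{\gamma n_\star}\le C\varPhi_m(u)$ for all $u\in H$.

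For the drift I would raise~\eqref{1.04} to the $m$th power and use $(a+b)^m\le(1+\theta)a^m+C_{m,\theta}b^m$, together with the fact that~\eqref{1.8} and $\pppp\ge0$ give finiteness of all moments $\E\varPhi(\eta_1)^m$, to obtain $\E_u\varPhi_m(u_1)\le q_m\varPhi_m(u)+K_m$ with $q_m=(1+\theta)q^m$ and $K_m<\infty$; because $q<1$, taking $m$ large and then $\theta$ small makes $q_m$ as small as needed — this is the only step fixing $m$ in terms of $\gamma$, and the same bound holds with $m$ replaced by $2m$. Writing $A=\{\varPhi_m\le D\}$, the drift is a contraction with factor $\theta_D:=q_m+K_m/D$ on $H\setminus A$, so $k\mapsto\theta_D^{-(k\wedge\tau_A)}\varPhi_m(u_{k\wedge\tau_A})$ is a nonnegative supermartingale, whence (using $\varPhi_m\ge1$) $\E_v e^{\gamma''\tau_A}\le\varPhi_m(v)$ for every $v\in H$, with $\gamma''=-\log\theta_D$; choosing $m$ and then $D$ large enough I get $\gamma''>\gamma$. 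Iterating the drift $\ell$ times from $A$ bounds $\E[\varPhi_m(u_\ell)\mid u_0\in A]$ and $\E[\varPhi_{2m}(u_\ell)\mid u_0\in A]$ by finite constants $D_1,D_2$.

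Next, fix $\rho$ with $p_0:=\IP\{\|\eta_1\|_U\le\rho\}>0$ (possible by~\eqref{1.08}). The key reachability claim is that there is $R=R(D,\rho,\ell)$ such that $v_0\in A$ and $\|\eta_j\|_U\le\rho$ for $j=1,\dots,\ell$ force $v_j:=S(v_{j-1})+\eta_j\in B_U(R)$ for all $j\le\ell$: indeed, Condition~(B) with second argument $0$ gives $\|S(v)-S(0)\|_U=\sup_N\gamma_N\|{\mathsf Q}_N(S(v)-S(0))\|\le e^{\pppp(v)+\pppp(0)}\|v\|$, and since $S(0)\in U$, $\pppp$ is bounded on balls, $U$ embeds continuously in $H$, and $\varPhi_m(v_0)\le D$ controls $\|v_0\|$ via~\eqref{1.03}, the radii obtained by taking $R_0$ to be the $H$-radius of $A$ from~\eqref{1.03} and $R_j=e^{\pppp(0)+\sup_{\|v\|\le c R_{j-1}}\pppp(v)}\,c R_{j-1}+\|S(0)\|_U+\rho$ are all finite; an induction on $j$ gives $\|v_j\|_U\le R_j$, and one takes $R=\max_{0\le j\le\ell}R_j$. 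I would then run a geometric-trials argument: set $\theta_1=\tau_A$ and $\theta_{i+1}=\min\{n>\theta_i+\ell:u_n\in A\}$, call attempt $i$ successful if $E_i:=\{\|\eta_{\theta_i+j}\|_U\le\rho,\ 1\le j\le\ell\}$; then $E_i$ is independent of $\FF_{\theta_i}$ with probability $p_0^{\ell}$, and on $E_i$ there is a good window ending at $\theta_i+\ell$, so $n_\star\le\theta_N+\ell$ for $N$ the first successful attempt. Conditioning successively — using $\E[e^{\gamma(\theta_i-\theta_{i-1}-\ell)}\mid\FF_{\theta_{i-1}+\ell}]\le\varPhi_m(u_{\theta_{i-1}+\ell})$ (valid since $\gamma\le\gamma''$), and then $\E[\mathbf{1}_{E_{i-1}^{c}}\varPhi_m(u_{\theta_{i-1}+\ell})\mid\FF_{\theta_{i-1}}]\le(1-p_0^{\ell})^{1/2}D_2^{1/2}$ by Cauchy--Schwarz and the $\varPhi_{2m}$-drift — I obtain $\E_u e^{\gamma\theta_N}\le\sum_{j\ge1}p_0^{\ell}\bigl(e^{\gamma\ell}(1-p_0^{\ell})^{1/2}D_2^{1/2}\bigr)^{j-1}\varPhi_m(u)$. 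Finally I enlarge $\rho$ so that $e^{\gamma\ell}(1-p_0^{\ell})^{1/2}D_2^{1/2}<1$ (possible since $p_0\to1$ as $\rho\to\infty$ while $D_2$ is already fixed); the series converges, $R$ and $C$ are thereby determined, and combining with the reduction of the first paragraph yields~\eqref{8.1}.

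The main obstacle — and the genuinely new point compared with the bounded-kick case of~\cite{JNPS-2012} — is that the recurrence target $B_U(R)^\ell$ is not a sublevel set of the natural Lyapunov function $\varPhi_m$, which only sees the $H$-norm: one has to interpose the smoothing step of Condition~(B), controlling $S(v)-S(0)$ in the stronger $U$-norm, to pass from a large $H$-ball into a (noise-size-dependent) $U$-ball, and then absorb the extra waiting produced by unbounded kicks. The latter is precisely what forces $m$ to be chosen large (so the recurrence rate $\gamma''$ exceeds $\gamma$) and what requires the mild dependence between a trial's success and the size of the state at the end of that trial to be broken by the second-moment estimate in terms of $\varPhi_{2m}$; getting this book-keeping to close is the only delicate part of the argument.
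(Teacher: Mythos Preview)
Your proposal is correct and follows the same architecture as the paper's proof: reduction via~\eqref{1.017} to the one-component hitting time~$n_\star$, a Foster--Lyapunov drift for~$\varPhi_m$ giving hyper-exponential recurrence to a $\varPhi_m$-sublevel set, and then a geometric-trials argument to land $\ell$ consecutive iterates in~$B_U(R)$.

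The execution differs in two places. For the reachability step, the paper (its Step~3) argues \emph{probabilistically}, invoking the stabilisability estimate~\eqref{1.7} to control $\pppp(u_1)+\varPhi(u_1)$ with high probability and then chaining $\ell$ such bounds; you instead give a \emph{deterministic} bound on $\|v_j\|_U$ on the event $\{\|\eta_j\|_U\le\rho,\ j\le\ell\}$, using only $\|S(v)-S(0)\|_U\le e^{\pppp(v)+\pppp(0)}\|v\|$ from Condition~(B), the inclusion $S(0)\in U$, and the boundedness of~$\pppp$ on $H$-balls. This is more elementary and avoids the stabilisability hypothesis at that point. For the combination of the geometric trials with the recurrence time, the paper bounds $\IP_u\{\tau_U(R)\ge M\}$ by splitting on $\{\hat n>k\}$ versus $\{\sigma_k\ge M\}$ and optimising over~$k\sim\e M$; you telescope the expectation $\E_u e^{\gamma\theta_N}$ directly, decoupling the failure indicator $\mathbf{1}_{E_{i-1}^c}$ from $\varPhi_m(u_{\theta_{i-1}+\ell})$ via Cauchy--Schwarz against a $\varPhi_{2m}$ bound. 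Both closures work; your bookkeeping is slightly tighter but requires the auxiliary $2m$-drift, while the paper's union-bound argument is cruder but needs only a single Lyapunov exponent. Neither approach dominates the other.
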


\begin{proof}
Let us define a stopping time for the Markov process $(u_k,\IP_u)$ associated with~\eqref{1.1}:
$$
\tau_U(R)=\min\{k\ge\ell-1:u_{k-\ell+1},\dots,u_{k}\in B_U(R)\}.
$$
It follows from~\eqref{1.017} that the required inequality~\eqref{8.1} will be established if we prove that
\begin{equation} \label{8.2}
\E_u\exp\bigl(\gamma \tau_U(R)\bigr)\le C\,\varPhi_m(u)
\quad\mbox{for any $u\in H$}.
\end{equation}
To this end,  we use a standard Lyapunov function technique well-known in the theory of Markov processes; see Chapter~3 in~\cite{hasminski1980} or Chapter~8 in~\cite{MT1993}. 

\smallskip
{\it Step~1}.
Given a number $r>0$, we denote by~$\tau(r)$ the first hitting time of the set $\{\varPhi\le r\}=\{v\in H:\varPhi(v)\le r\}$. We claim that $\IP_u\{\tau(r)<\infty\}=1$ for a sufficiently large~$r$ and any $u\in H$, and for any $\gamma>0$ there are positive numbers~$r$, $m$, and~$C$ such that
\begin{equation} \label{8.3}
\E_u\exp\bigl(3\gamma\tau(r)\bigr)\le C\,\varPhi_m(u)
\quad\mbox{for any $u\in H$}.
\end{equation}
Indeed, it follows from~\eqref{1.04} that 
\begin{equation} \label{8.4}
\varPhi_m(S(u)+v)\le 2q^m\varPhi_m(u)+C_m\varPhi_m(v)\quad\mbox{for all $u,v\in H$},
\end{equation}
where $m\ge1$ is an arbitrary integer and~$C_m$ does not depend on~$u$ and~$v$. Combining this with~\eqref{1.8}, for any $\varkappa\in(2q^m,1)$ one can find $r_*>0$ such that
\begin{equation} \label{8.5}
\E_u\varPhi_m(u_1)\le \varkappa\,\bigl(\varPhi_m(u)\vee r_*\bigr)\quad\mbox{for any $u\in H$}.
\end{equation}
Using the Markov property and arguing by induction, we easily prove that if $r\ge r_*$, then  (cf. proof of Lemma~3.6.1 in~\cite{KS-book})
$$
p_k(u):=\E_u\bigl(I_{\{\tau(r)>k\}}\varPhi_m(u_k)\bigr)\le \varkappa^k\varPhi_m(u)\quad\mbox{for all $k\ge0$, $u\in H$}. 
$$
It follows that 
\begin{equation} \label{8.6}
\IP_u\{\tau(r)>k\}\le r^{-m}p_k(u)\le r^{-m}\varkappa^k\varPhi_m(u),
\end{equation}
and therefore $\tau(r)$ is $\IP_u$-almost surely finite for $r\ge r_*$. Furthermore, given $\gamma>0$ we can choose~$m\ge1$ so large that $e^{3\gamma} \varkappa<1$ for some $\varkappa>2q^m$. In this case, inequality~\eqref{8.6} implies that
\begin{align*}
\E_u\exp\bigl(3\gamma \tau(r)\bigr)&\le1+\sum_{k=1}^\infty e^{3\gamma k}\IP_u\{\tau(r)>k-1\}\\
&\le1+r^{-m}\varPhi_m(u)\sum_{k=1}^\infty e^{3\gamma k}\varkappa^{k-1}\le C\,\varPhi_m(u). 
\end{align*}

\smallskip
{\it Step~2}.
Given a positive number~$R$, let us introduce the event 
$$
\Gamma(R)=\{u_{j+1}\in B_U(R)\mbox{ for $j=1,\dots,\ell$}\}. 
$$
Suppose that, for any $r>0$ and $p<1$, we can find~$R$ such that
\begin{equation} \label{8.7}
\IP_u\bigl(\Gamma(R)\bigr)\ge p\quad\mbox{for any $u\in \{\varPhi\le r\}$}.
\end{equation}
In this case, we define a sequence of stopping times~$\sigma_n'$ by the relations
$$
\sigma_0'=\tau(r), \quad \sigma_n'=\min\{k\ge \sigma_{n-1}'+\ell+1:\varPhi(u_k)\le r\}, \quad n\ge1, 
$$
and denote $\sigma_n=\sigma_n'+\ell+1$. Let $\hat n$ be the first integer~$n\ge0$ such that $u_{\sigma_n-j}\in B_U(R)$ for $j=0,\dots,\ell-1$. It follows from~\eqref{8.7} and the strong Markov property that 
\begin{equation} \label{8.8}
\IP_u\{\hat n>k\}\le (1-p)^k\quad\mbox{for any $u\in H$, $k\ge0$}. 
\end{equation}
Furthermore, using~\eqref{8.3}, \eqref{1.04}, and the strong Markov property, it is not difficult to check that
\begin{equation} \label{8.9}
\E_ue^{3\gamma \sigma_k}\le C_1^k\,\varPhi_m(u),
\end{equation}
where $C_1>0$ does not depend on $u\in H$ and $k\ge0$. Combining~\eqref{8.8} and~\eqref{8.9}, for any integers $k,M\ge1$ we now write
\begin{align}
\IP_u\bigl\{\tau_U(R)\ge M\bigr\}&=\IP_u\bigl\{\tau_U(R)\ge M,\sigma_k<M\bigr\}+\IP_u\bigl\{\tau_U(R)\ge M,\sigma_k\ge M\bigr\}
\notag\\
&\le  \IP_u\bigl\{\tau_U(R)>\sigma_k\bigr\}+\IP_u\bigl\{\sigma_k\ge M\bigr\}.\label{8.10}
\end{align}
Since $\{\tau_U(R)>\sigma_k\}\subset\{\hat n>k\}$, the first term on the left-hand side can be estimated by~\eqref{8.8}. Furthermore, it follows from~\eqref{8.9} that
$$
\IP_u\bigl\{\sigma_k\ge M\bigr\}\le C_1^k\,\varPhi_m(u)e^{-3\gamma M}. 
$$
Substituting these inequalities into~\eqref{8.10}, we obtain
$$
\IP_u\bigl\{\tau_U(R)\ge M\bigr\}\le (1-p)^k+C_1^k\,\varPhi_m(u)e^{-3\gamma M}. 
$$
Choosing $k\sim \e M$, with $\e>0$ so small that $\e\log C_1\le \gamma$ and then $R>0$ so large that $\e\log(1-p)^{-1}\ge2\gamma$, we obtain
$$
\IP_u\bigl\{\tau_U(R)\ge M\bigr\}\le 2e^{-2\gamma M}\varPhi_m(u). 
$$
This immediately implies the required inequality~\eqref{8.2}.

\smallskip
{\it Step~3}.
Thus, it remains to establish~\eqref{8.7}. To this end, we introduce the events
$$
\Gamma_1^j(\rho)=\{\pppp(u_j)+\varPhi(u_j)\le\rho\}, \quad \Gamma_2^j(R)=\{u_j\in B_U(R)\}, \quad j\ge1, 
$$
and notice that, for any sequence of positive numbers $\rho_1,\dots,\rho_{\ell+1}$, we have
\begin{equation} \label{8.11}
\Gamma(R)\supset\Gamma_1^1(\rho_1)\cap \bigcap_{j=2}^{\ell+1} \bigl(\Gamma_1^j(\rho_j)\cap \Gamma_2^{j}(R)\bigr).
\end{equation}
It follows from inequalities~\eqref{1.4}, \eqref{1.08}, and~\eqref{1.7} and the inclusion $S(0)\in U$ that, for any $\delta>0$ and $\rho>0$, one can find positive numbers~$\rho'$ and~$R$ such that 
$$
\IP_v\bigl\{\pppp(u_1)+\varPhi(u_1)\le\rho',u_1\in B_U(R)\bigr\}\ge 1-\delta,
$$
where $v\in H$ is any vector satisfying the inequality $\pppp(v)+\varPhi(v)\le\rho$. Using this observation, for any $\delta>0$ one constructs by induction a finite sequence $\rho_1\le\cdots\le\rho_{\ell+1}$ and a number $R>0$ such that 
\begin{align} 
\IP_v\bigl\{\pppp(u_1)+\varPhi(u_1)\le\rho_{1}\bigr\}&\ge 1-\delta\quad
\mbox{for $v\in \{\varPhi\le r\}$},
\label{8.12}\\
\IP_v\bigl\{\pppp(u_1)+\varPhi(u_1)\le\rho_{j+1},u_1\in B_U(R)\bigr\}&\ge 1-\delta\quad
\mbox{for $v\in \Gamma_1(\rho_j)$},
\label{8.13}
\end{align}
where $j=1,\dots,\ell$. Combining this with the Markov property and inclusion~\eqref{8.11}, we obtain
$$
\IP_u\bigl(\Gamma(R)\bigr)\ge (1-\delta)^\ell\,\IP_u\bigl(\Gamma_1(\rho_1)\bigr)\ge (1-\delta)^{\ell+1},
$$
where $u\in\{\varPhi\le r\}$. Choosing $\delta>0$ sufficiently small, we obtain the required inequality~\eqref{8.7}. 
\end{proof}

\subsection{Exponential tightness}
\label{s8.1}
Recall that the occupation measures~$\zzeta_k$ were defined by~\eqref{1.016} and the concept of exponential tightness is introduced in Definition~\ref{d4.1}. Given a subset $\LLambda\subset H^\ell$, we shall say that $\{\zzeta_k\}$ is {\it exponentially tight uniformly in $\lambda\in\LLambda$\/} if for any $a>0$ there is a compact subset $\KK\subset H^\ell$ such that
$$
\limsup_{k\to\infty}\frac{1}{k}\log\sup_{\lambda\in\LLambda}\IP_\lambda\{\zzeta_k\in\KK^c\}\le -a.
$$

\begin{proposition} \label{p6.2}
For any $\delta>0$ and $M>0$, the family $\{\zzeta_k\}$ is exponentially tight uniformly with respect to $\lambda\in\LLambda(\delta,M)$.
\end{proposition}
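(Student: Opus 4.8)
The plan is to apply Lemma~\ref{L:2.2}. Since the Banach space~$U$ introduced after Condition~(C) embeds compactly into~$H$, each ball $B_U(\rho)^\ell$ is compact in~$H^\ell$, so the function
$$
\Psi(\uuu)=\kappa\sum_{j=1}^\ell\log\bigl(1+\|u^j\|_U\bigr)\in[0,+\infty],\qquad \uuu=[u^1,\dots,u^\ell],
$$
is lower semicontinuous and has compact level sets in~$H^\ell$; here $\kappa>0$ is a small constant to be chosen at the end. By Lemma~\ref{L:2.2} it suffices to produce $\kappa$, $C$ and $c>0$, not depending on~$\lambda$ or~$k$, such that $\E_\lambda\exp\bigl(\textstyle\sum_{n=1}^k\Psi(\uuu_n)\bigr)\le C\,e^{ck}$ for all $k\ge1$ and $\lambda\in\LLambda(\delta,M)$. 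The finitely many indices $n<\ell$ are harmless: for $n\ge\ell$ every coordinate of~$\uuu_n$ equals $u_{n-\ell+j}^\ell$ for some $j$, and these lie in~$U$ because $S(0)\in U$ and $\eta_m\in U$ almost surely, so that (shifting the summation to start at $n=\ell+1$, which also removes the initial state from the estimates below, and invoking the exponential equivalence recorded in Section~\ref{s1.5}) it is enough to bound $\E_\lambda\exp\bigl(\kappa\ell\sum_{m\ge 2}\log(1+\|u_m^\ell\|_U)\bigr)$ up to the relevant range of~$m$.

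The first step is a pointwise bound for a single coordinate. Applying~\eqref{1.4} with $H_N=\lspan\{e_1,\dots,e_N\}$ and the pair $(v,0)$, recalling~\eqref{1.007} and $S(0)\in U$, one gets $\|S(v)\|_U\le\|S(0)\|_U+e^{\pppp(0)}e^{\pppp(v)}\|v\|$; since $\|v\|\le\varPhi(v)^{1/\alpha}\le C_\e e^{\e\varPhi(v)}$ by~\eqref{1.03}, and $u_m^\ell=S(u_{m-1}^\ell)+\eta_m$, for every $\e>0$ and $m\ge1$,
$$
\log\bigl(1+\|u_m^\ell\|_U\bigr)\le C_1+\pppp(u_{m-1}^\ell)+\e\,\varPhi(u_{m-1}^\ell)+\log\bigl(1+\|\eta_m\|_U\bigr).
$$
One then exponentiates the corresponding sum and estimates the three pieces separately by H\"older's inequality: $(i)$ $\sum_m\pppp(u_m^\ell)$ is handled by the stabilisability inequality~\eqref{1.7}, which gives $\E_u\exp(\delta_*\sum_{m\le k}\pppp(u_m))\le e^{\rho\varPhi(u)}e^{c_*k}$, whence, by Jensen, $\E_u\exp(\theta\sum\pppp(u_m))\le e^{(\theta\rho/\delta_*)\varPhi(u)}e^{(\theta c_*/\delta_*)k}$ for $\theta<\delta_*$; $(ii)$ $\sum_m\varPhi(u_m^\ell)$ is controlled by iterating~\eqref{1.04} (so $\sum_{m\le k}\varPhi(u_m)\le(1-q)^{-1}\varPhi(u_0)+C\sum_{i\le k}\varPhi(\eta_i)$) together with $\mmmm_\delta(\LL)<\infty$, producing a bound $e^{(\theta'/(1-q))\varPhi(u_0)}(\E e^{\theta'C'\varPhi(\eta_1)})^k$; $(iii)$ $\sum_m\log(1+\|\eta_m\|_U)$ factorises over the i.i.d.\ kicks, and $\E(1+\|\eta_1\|_U)^\theta\le 1+\E\|\eta_1\|_U\le1+\BBBB_*$ by~\eqref{1.08} when $\theta\le1$. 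Choosing first $\e$ small and then $\kappa$ small, the coefficients of $\varPhi(u_0^\ell)$ generated in $(i)$ and $(ii)$ become $\le\delta$, so integrating against~$\lambda$ gives $\E_\lambda(\cdot)\le e^{ck}\int e^{\delta\varPhi(v^\ell)}\lambda(\dd\vvv)\le Me^{ck}$, uniformly over $\lambda\in\LLambda(\delta,M)$, which is the required bound. An alternative (and arguably more robust) route to the same estimate, valid for an arbitrary, possibly very irregular initial state, is to invoke the hyper-exponential recurrence, Proposition~\ref{p8.1}: it ensures that $B_U(R_0)^\ell$ is reached after a time $\ttau_U(R_0)$ with exponential moment $\le C\varPhi(u^\ell)^{m}$, so one runs the argument from the first entrance time (on $B_U(R_0)^\ell$ all of $\varPhi$, $\pppp$ and $\log(1+\|\cdot\|_U)$ are bounded, so the Markov property applies), the contribution of the initial excursion being bounded by the three estimates above combined with a Cauchy--Schwarz splitting against $\IP_\uuu\{\ttau_U(R_0)\ge m\}\le e^{-\gamma m}\E_\uuu e^{\gamma\ttau_U(R_0)}$.

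The step I expect to be the crux is precisely this last, bookkeeping step: one must choose $\kappa$ (and $\e$) small enough that the \emph{fixed} growth rate~$\rho$ coming out of stabilisability, as well as the amplification generated by iterating the dissipativity estimate and by the Cauchy--Schwarz splitting against the recurrence-time tail, are simultaneously absorbed into the prescribed exponent~$\delta$, all while keeping every constant uniform in $\lambda\in\LLambda(\delta,M)$ and making sure that the unavoidable initial iterates — which need not belong to $U^\ell$ — do not render the exponential moment infinite.
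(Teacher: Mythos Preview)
Your proposal is correct and follows essentially the same route as the paper. Both arguments reduce to Lemma~\ref{L:2.2} with the weight $\kappa\log(1+\|\cdot\|_U)$, bound $\|S(v)\|_U$ via~\eqref{1.4} with $v=0$ and $S(0)\in U$, and then split the resulting product by Cauchy--Schwarz/H\"older into a $\pppp$-piece (handled by stabilisability~\eqref{1.7},~\eqref{1.09}), a $\varPhi$-piece (handled by iterating~\eqref{1.04}, which is exactly Lemma~\ref{l3.1}), and an independent $\eta$-piece; the paper carries this out on~$H$ after invoking~\eqref{1.017}, whereas you work directly on~$H^\ell$, but this is cosmetic. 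Your ``alternative route'' via Proposition~\ref{p8.1} is not used in the paper and is unnecessary here---the direct estimate already yields the uniformity in $\lambda\in\LLambda(\delta,M)$ once $\kappa$ is small enough, and the recurrence argument would only complicate the bookkeeping you flag as the crux.
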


\begin{proof}
In view of~\eqref{1.017} and Lemma~\ref{L:2.2}, it suffices to construct a function $\varPsi:H\to[0,+\infty]$ with compact level sets and positive numbers~$c$ and~$C$ such that\footnote{We used also Lemma~6.2 in~\cite{JNPS-2012}, according to which, for any random sequence, all the occupation measures corresponding to various initial times are exponentially equivalent, and therefore the exponential tightness for one of them implies the same property for all others.}
\begin{equation} \label{6.14}
\E_\lambda \exp\bigl(\varPsi(u_2)+\cdots+\varPsi(u_k)\bigr)\le Ce^{ck}
\quad\mbox{for $k\ge1$, $\lambda\in\Lambda(\delta,M)$},
\end{equation}
where $\{u_k\}$ stands for the trajectory of~\eqref{1.1}. We claim that inequality~\eqref{6.14} holds for $\varPsi(u)=\gamma\log(1+\|u\|_U)$, with a sufficiently small~$\gamma\in(0,1)$. Indeed, the function~$\varPsi$ is continuous on~$U$, and the embedding $U\subset H$ is compact. Hence, $\varPsi$~has compact level sets. In view of the Cauchy--Schwarz inequality and estimates~\eqref{1.4} (with $N=0$ and $v=0$) and~\eqref{1.08}, we have  
\begin{align}
\E_\lambda \exp\biggl(\,\sum_{n=2}^k\varPsi(u_n)\biggr)&=\E_\lambda\prod_{n=2}^k\bigl(1+\|u_n\|_U\bigr)^\gamma\notag\\
&\le\E_\lambda\prod_{n=1}^{k-1}\bigl(1+\|S(u_{n})\|_U\bigr)^\gamma\bigl(1+\|\eta_{n+1}\|_U\bigr)^\gamma\notag\\
&\le e^{c_1k} \,\E_\lambda\prod_{n=1}^{k-1}e^{2\gamma\pppp(u_{n})}\bigl(1+\|u_{n}\|\bigr)^{2\gamma}.
\label{6.015}
\end{align}
Here and henceforth, we denote by $c_i$ positive constants not  depending on~$k$ and~$\lambda$. Using again the Cauchy--Schwarz inequality, the stabilisability of~$\pppp$, and~\eqref{1.8}, for $4\gamma\le\min\{\alpha,\delta\}$ we obtain
\begin{equation*}
\E_\lambda \exp\biggl(\,\sum_{n=2}^k\varPsi(u_n)\biggr)
\le e^{c_2 k} \,\E_\lambda\prod_{n=1}^{k-1}\varPhi(u_n). 
\end{equation*}
The required inequality~\eqref{6.14} follows now from Lemma~\ref{l3.1}.
\end{proof}

\subsection{Growth condition}
\label{s8.2}
As was mentioned in Section~\ref{s1.5}, the proof of Theorem~\ref{t1.2} is based on an application of Theorem~\ref{T:3.1}. In this section, we introduce some auxiliary objects entering the formulation of that result and check the growth condition; see~\eqref{3.2}. 

For any integer $m\ge1$, we define a function $\wwww_m:H^\ell\to[1,+\infty)$ by the relation
\begin{equation} \label{6.21}
\wwww_m(\uuu)=\varPhi_m(u^1)+\cdots+\varPhi_m(u^\ell), \quad \uuu=[u^1,\dots,u^\ell].
\end{equation}
In what follows, to simplify notation, we shall often write $\wwww$ instead of~$\wwww_m$. Furthermore, for any integer $R\ge1$, we denote by~$X_R$ the direct product of~$\ell$ copies of~$B_U(R)$. Recall that, given $V\in C_b(H^\ell)$, the generalised Markov semigroup~$\PPPP_k^V$ is defined by~\eqref{6.3}. 

\begin{proposition} \label{p6.3}
For any $V\in C_b(H^\ell)$, the measures $P_{\ell+1}^V(\uuu,\cdot)$, $\uuu\in H^\ell$, are concentrated on~$U^\ell$, and there exist two integers $m_0\ge1$ and $R_0\ge1$ such that inequality~\eqref{3.2} holds with $\PPPP_k=\PPPP_k^V$, $\wwww=\wwww_m$, and $m\ge m_0$. 
\end{proposition}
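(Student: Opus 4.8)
Write $V_*=\sup_{H^\ell}|V|$. The two assertions will be proved separately. The concentration statement is straightforward: by~\eqref{1.015} each coordinate of $\uuu_k=[u^1_k,\dots,u^\ell_k]$ is, for $k\ge\ell$, of the form $S(w)+\eta_j$ with $w\in H$ and $j\le k$; applying~\eqref{1.4} with $N=0$ and $v=0$ together with $S(0)\in U$ shows $\|S(w)\|_U\le\|S(0)\|_U+e^{\pppp(w)+\pppp(0)}\|w\|<\infty$, so $S$ maps $H$ into $U$, while $\eta_j\in U$ almost surely by~\eqref{1.08}. Hence $\uuu_{\ell+1}\in U^\ell=X_\infty$ $\IP_\uuu$-almost surely, and since $P_{\ell+1}^V(\uuu,\Gamma)=\E_\uuu\bigl[\mathbf1_\Gamma(\uuu_{\ell+1})\exp\bigl(\sum_{n=1}^{\ell+1}V(\uuu_n)\bigr)\bigr]$ the measure $P_{\ell+1}^V(\uuu,\cdot)$ is concentrated on $X_\infty$. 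For the growth condition~\eqref{3.2} I will prove the pointwise estimate
\[
\PPPP_k^V\wwww_m(\uuu)\le C\,\wwww_m(\uuu)\,\|\PPPP_k^V\mathbf1\|_{R_0}
\qquad\text{for all }k\ge0\text{ and }\uuu\in H^\ell ,
\]
which yields~\eqref{3.2} after dividing by $\wwww_m(\uuu)$ and taking the supremum over $\uuu$.

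Three ingredients will enter. The first is a \emph{$V$-twisted Foster--Lyapunov inequality}: since the last-step map of~\eqref{1.015} iterates $S$, an $\ell$-fold application of~\eqref{8.4} and the finiteness of $\E\varPhi_m(\eta_1)$ (which follows from~\eqref{1.8}) give $\E_\uuu\wwww_m(\uuu_\ell)\le\theta_m\wwww_m(\uuu)+C_m$ with $\theta_m:=\sum_{i=1}^\ell(2q^m)^i$, hence $\PPPP_\ell^V\wwww_m\le\tilde\theta\,\wwww_m+\tilde C\mathbf1$ with $\tilde\theta:=e^{\ell V_*}\theta_m$; crucially $\theta_m\to0$ as $m\to\infty$, so one may assume $m$ large enough that $e^{2\ell V_*}\theta_m<1$. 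The second is the \emph{hyper-exponential recurrence} of Proposition~\ref{p8.1}, applied with $\gamma=4V_*$: it supplies an integer $R_0$ and an exponent $m_\gamma$ with $\E_\uuu\exp(\gamma'\ttau_U(R_0))\le C\varPhi_{m_\gamma}(u^\ell)\le C\wwww_m(\uuu)$ for every $\gamma'\le4V_*$ and $m\ge m_\gamma$. The third consists of two elementary facts coming from $|V|\le V_*$: the lower bound $\PPPP_k^V\mathbf1\ge e^{-kV_*}$, and the comparison $\|\PPPP_a^V\mathbf1\|_{R_0}\le e^{(b-a)V_*}\|\PPPP_b^V\mathbf1\|_{R_0}$ for $a\le b$.

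With these at hand the argument splits into two stages. First, I will show that for $\vvv\in X_{R_0}$ one has $\PPPP_j^V\wwww_m(\vvv)\le C_0\|\PPPP_j^V\mathbf1\|_{R_0}$ for every $j$: iterating the twisted drift gives $\PPPP_{\ell n}^V\wwww_m\le\tilde\theta^n\wwww_m+\tilde C\sum_{i=0}^{n-1}\tilde\theta^i\PPPP_{\ell(n-1-i)}^V\mathbf1$, and on $X_{R_0}$, where $\wwww_m$ is bounded by some $W_0$, the third ingredient lets one replace each $\PPPP_{\ell(n-1-i)}^V\mathbf1$ by $e^{\ell(i+1)V_*}\PPPP_{\ell n}^V\mathbf1$; the condition $e^{2\ell V_*}\theta_m<1$ then both makes the geometric series converge and forces $\tilde\theta^nW_0\le W_0e^{-\ell nV_*}\le W_0\PPPP_{\ell n}^V\mathbf1$ on $X_{R_0}$, which proves the bound for $j$ a multiple of $\ell$; the general $j$ is reduced to this by one further use of the third ingredient and the crude bound $\PPPP_s^V\wwww_m\le C_s\wwww_m$ for $0\le s<\ell$. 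Second, for arbitrary $\uuu$ I will split $\PPPP_k^V\wwww_m(\uuu)$ over the hitting time $\ttau=\ttau_U(R_0)$. On $\{\ttau=t\le k\}$, the strong Markov property and the first stage applied at $\uuu_t\in X_{R_0}$ bound the contribution by $C_0e^{tV_*}\IP_\uuu(\ttau=t)\|\PPPP_{k-t}^V\mathbf1\|_{R_0}\le C_0e^{2tV_*}\IP_\uuu(\ttau=t)\|\PPPP_k^V\mathbf1\|_{R_0}$, whose sum over $t$ is $\le C_0\E_\uuu e^{2V_*\ttau}\|\PPPP_k^V\mathbf1\|_{R_0}\le C\wwww_m(\uuu)\|\PPPP_k^V\mathbf1\|_{R_0}$ by the second ingredient. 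On $\{\ttau>k\}$, the contribution is $\le e^{kV_*}\E_\uuu[\wwww_m(\uuu_k)\mathbf1_{\{\ttau>k\}}]$, and conditioning at time $\lceil k/2\rceil$ and using the plain $\varPhi_m$-drift on $\{\ttau>\lceil k/2\rceil\}$ together with the exponential tail $\IP_\uuu(\ttau>\lceil k/2\rceil)\le C\wwww_m(\uuu)e^{-2kV_*}$ of the second ingredient shows $\E_\uuu[\wwww_m(\uuu_k)\mathbf1_{\{\ttau>k\}}]\le C\wwww_m(\uuu)e^{-2kV_*}$ once $m$ is large enough that the drift contraction over $\asymp k/\ell$ steps also beats $e^{-2kV_*}$; since $\|\PPPP_k^V\mathbf1\|_{R_0}\ge e^{-kV_*}$ this contribution is again $\le C\wwww_m(\uuu)\|\PPPP_k^V\mathbf1\|_{R_0}$. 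Adding the two contributions gives the displayed estimate, and $m_0=m_0(V)$ is the largest of the finitely many thresholds on $m$ produced above.

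The point at which care is needed is the coexistence of the two exponential rates: both $\PPPP_k^V\wwww_m$ and $\|\PPPP_k^V\mathbf1\|_{R_0}$ may be of size $e^{kV_*}$, so the Feynman--Kac weight $\exp(\sum_nV(\uuu_n))$ must stay attached to the trajectory at every step --- this is precisely why a \emph{twisted} Lyapunov inequality is required rather than a plain drift followed by H\"older's inequality, and why the admissible $m$ depends quantitatively on $\|V\|_\infty$. A secondary subtlety is that the Lyapunov inequality contracts the $\ell$-process towards sublevel sets of $\varPhi_m$ while Proposition~\ref{p8.1} controls the entrance time into $X_{R_0}=B_U(R_0)^\ell$; the two are reconciled by working with $\ttau_U(R_0)$ directly for the excursion away from $X_{R_0}$ and invoking the $\varPhi_m$-drift only to control $\wwww_m(\uuu_k)$ on the complementary event.
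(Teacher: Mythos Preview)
Your proof is correct and uses the same two core ingredients as the paper --- the $\ell$-step twisted Foster--Lyapunov inequality $\PPPP_\ell^V\wwww_m\le\tilde\theta\,\wwww_m+\tilde C\mathbf1$ with $\tilde\theta=e^{\ell\|V\|_\infty}\theta_m$, and the hyper-exponential recurrence of Proposition~\ref{p8.1} --- but the logical order is reversed. The paper first proves the growth bound for~$\mathbf1$ alone, i.e.\ $\sup_k\|\PPPP_k^V\mathbf1\|_{L^\infty_\wwww}/\|\PPPP_k^V\mathbf1\|_{R_0}<\infty$, via the hitting-time decomposition; this is painless because on $\{\ttau>k\}$ one simply uses $\Xi_V(k)\le e^{\gamma\ttau}$ and invokes Proposition~\ref{p8.1}, with no need to control $\wwww_m(\uuu_k)$. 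It then iterates the twisted drift \emph{globally} in the $L^\infty_\wwww$ norm, feeding in the bound for~$\mathbf1$ to handle the inhomogeneous term. Your route instead establishes the full bound first on $X_{R_0}$ by iterating the drift there, and only afterwards extends to all initial points via the hitting time; the price is that on $\{\ttau>k\}$ you must control $\E_\uuu[\wwww_m(\uuu_k)\mathbf1_{\{\ttau>k\}}]$, which forces the extra conditioning-at-$\lceil k/2\rceil$ argument and a further largeness requirement $\theta_m\le e^{-4\ell\|V\|_\infty}$ on~$m$. Both organisations are sound; the paper's is slightly more economical since the unbounded observable~$\wwww_m$ never meets the event $\{\ttau>k\}$.
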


Let us note that if~\eqref{3.2} holds for some integer~$m\ge1$, $\pppp:H\to\R_+$ is a continuous function bounded on any ball, and~$X$ is a Banach space satisfying the inclusions $U\subset X$ and $X\Subset H$, then the conclusion of the proposition remains true if we define~$X_R$ as the direct product of~$\ell$ copies of $\{u\in B_X(R):\pppp(u)\le R\}$. 

\begin{proof}[Proof of Proposition~\ref{p6.3}]
The fact that $P_{\ell+1}^V(\uuu,\cdot)$ is concentrated on~$U^\ell$ follows immediately from~\eqref{1.017} and~\eqref{6.14}. Replacing $V$ by $V-\inf_H V$, we can assume without loss of generality that $V\ge0$ and $\Osc(V)=\|V\|_\infty$. The proof of~\eqref{3.2} is divided into three steps. 

\smallskip
{\it Step 1}. 
We first show that it suffices to prove the inequality
\begin{equation} \label{6.15}
\sup_{k\ge0}
\frac{\|\PPPP_{k\ell}^V\wwww\|_{L_\wwww^\infty}}{\|\PPPP_{k\ell}^V{\mathbf1}\|_{R_0}}<\infty.
\end{equation}
Indeed, suppose that~\eqref{6.15} is established. Since $V\ge0$, we have 
$$
e^{-\gamma(p-j)}\PPPP_p^V{\mathbf1}(\uuu)\le\PPPP_j^V{\mathbf1}(\uuu)\le\PPPP_p^V{\mathbf1}(\uuu)\quad\mbox{for $j\le p$, $\uuu\in H^\ell$},
$$
where $\gamma=\|V\|_\infty$. It follows that 
\begin{equation} \label{6.19}
e^{-\gamma(p-j)}\|\PPPP_p^V{\mathbf1}\|_{R_0}\le\|\PPPP_j^V{\mathbf1}\|_{R_0}\le \|\PPPP_p^V{\mathbf1}\|_{R_0}
\quad\mbox{for $j\le p$}.
\end{equation}
On the other hand, using inequality~\eqref{3.02} with $\lambda=\delta_{u^\ell}$ and~$\varPhi$ replaced by~$\varPhi_m$, it is easy to check that
$$
\PPPP_1^V\wwww(\uuu)\le e^\gamma\wwww(\uuu)+C_1, \quad \uuu=[u^1,\dots,u^\ell]\in H^\ell,
$$
whence it follows that 
$$
\|\PPPP_1^Vf\|_{L^\infty_\wwww}\le (e^\gamma+C_1)\|f\|_{L^\infty_\wwww}\quad\mbox{for any $f\in L_\wwww^\infty$}. 
$$
The semigroup property now implies that
\begin{equation} \label{6.20}
\|\PPPP_{p}^V\wwww\|_{L_\wwww^\infty}\le (e^\gamma+C_1)^{p-j}\|\PPPP_{j}^V\wwww\|_{L_\wwww^\infty}
\quad\mbox{for $j\le p$}. 
\end{equation}
Combining inequalities~\eqref{6.19} and~\eqref{6.20} with $j=k\ell$ and $p=k\ell+n$, where $n\in[1,\ell-1]$ is an integer, we obtain the required inequality~\eqref{3.2}. 

\smallskip
{\it Step 2}. 
We now show that
\begin{equation} \label{6.0015}
\sup_{k\ge0}
\frac{\|\PPPP_{k}^V{\mathbf1}\|_{L_\wwww^\infty}}{\|\PPPP_{k}^V{\mathbf1}\|_{R_0}}<\infty,
\end{equation}
where $\wwww=\wwww_n$, and~$n$ and $R_0$ are some positive integers. 
Indeed, let us  find~$R_0$ and~$n$ such that inequality~\eqref{8.1} holds with $R=R_0$, $m=n$, and a constant~$C>0$. We now write
\begin{equation}
\PPPP_k^V{\mathbf1}(\uuu)=\E_\uuu\Xi_V(k)
=\E_\uuu \bigl(I_{G_k}\Xi_V(k)\bigr)+\E_\uuu\bigl(I_{G_k^c}\Xi_V(k)\bigr)=:I_1+I_2,
\label{6.16}
\end{equation}
where $G_k=\{\ttau_U(R_0)> k\}$ and $\Xi_V(k)=\exp(V(u_1)+\cdots+V(u_k))$. Since $V\ge0$, it follows from~\eqref{8.1} that $\PPPP_k^V{\mathbf1}(\uuu)\ge1$ for any $\uuu\in H^\ell$ and 
$$
I_1\le \E_\uuu \Xi_V\bigl(\ttau_U(R_0)\bigr)\le \E_\uuu\exp\bigl(\gamma\ttau_U(R_0)\bigr)\le C\,\varPhi_{n}(\uuu)
\le C\,\varPhi_{n}(\uuu)\,\|\PPPP_k^V{\mathbf1}\|_{R_0}. 
$$
Furthermore, in view of the strong Markov property, we have
$$
I_2\le \E_\uuu\bigl\{I_{G_k}\Xi_V(\ttau_U(R_0))\,\E_{\uuu(\ttau_U(R_0))}\Xi_V(k)\bigr\}
\le \E_\uuu\bigl(e^{\gamma\tau_U(R_0)}\bigr)\,\|\PPPP_k^V{\mathbf1}\|_{R_0},
$$
where we write $\uuu(\ttau_U(R_0))$ instead of $\uuu_{\ttau_U(R_0)}$ to avoid triple subscript. Using again~\eqref{8.1} and substituting these inequalities into~\eqref{6.16}, we obtain~\eqref{6.0015}.

\smallskip
{\it Step~3}. 
We claim that~\eqref{6.15} holds with $\wwww=\wwww_m$ and sufficiently large $m\ge n$. Indeed, let $\uuu_k=\Ss_k(\uuu;\eta_1,\dots,\eta_k)$ be the random variable defined by~\eqref{1.015}. Using~\eqref{8.4} and arguing by induction, it is straightforward to check that
\begin{align} 
\wwww\bigl(\Ss_\ell(\uuu;\eta_1,\dots,\eta_\ell)\bigr)
&\le \varkappa_{m,\ell}\,\varPhi_m(u^\ell)+C_{m,\ell}\bigl(\varPhi_m(\eta_1)+\cdots+\varPhi_m(\eta_\ell)\bigr)\notag\\
&\le \varkappa_{m,\ell}\,\wwww(\uuu)+C_{m,\ell}\,\wwww\bigl([\eta_1,\dots,\eta_\ell]\bigr), 
\label{6.17}
\end{align}
where $\uuu=[u^1,\dots,u^\ell]$, and we set
$$
\varkappa_{m,\ell}=\sum_{j=1}^{\ell}(2q^m)^j, \quad C_{m,\ell}=C_m\sum_{j=0}^{\ell-1}(2q^m)^j. 
$$
It follows from~\eqref{6.17} that
\begin{align}
\PPPP_{k\ell}^V\wwww(\uuu)
&=\E_\uuu \bigl\{\Xi_V(k\ell)\wwww\bigl(\Ss_\ell(\uuu_{(k-1)\ell};\eta_{(k-1)\ell+1},\dots,\eta_{k\ell})\bigr)\bigr\}
\notag\\
&\le\varkappa_{m,\ell}\,e^{\ell\gamma}\,\E_\uuu\bigl\{\Xi_V((k-1)\ell)\wwww(\uuu_{(k-1)\ell})\bigr\}\notag\\
&\qquad+C_{m,\ell}\,e^{\ell\gamma}\,\E_\uuu\bigl\{\Xi_V((k-1)\ell)\wwww([\eta_{(k-1)\ell+1},\dots,\eta_{k\ell}])\bigr\}\notag\\
&\le \varkappa_{m,\ell}\,e^{\ell\gamma}\,\PPPP_{(k-1)\ell}^V\wwww(\uuu)+C_{m,\ell}'\,\PPPP_{(k-1)\ell}^V{\mathbf1}(\uuu),
\label{6.18}
\end{align}
where we used the independence of~$\Xi_V((k-1)\ell)$ and $\{\eta_j,j>(k-1)\ell\}$ and set 
$$
C_{m,\ell}'=\ell\,C_{m,\ell}\,e^{\ell\gamma}\,\E\,\varPhi_m(\eta_1).
$$
Let us choose $m\ge n$ so large that $\varkappa:=\varkappa_{m,\ell}\,e^{\ell\gamma}<1$. Taking the $L^\infty_{\wwww}$-norm of both sides of~\eqref{6.18} and using inequality~\eqref{6.0015} (with is true for any $m\ge n$), we derive
$$
\|\PPPP_{k\ell}^V\wwww\|_{L^\infty_\wwww}\le \varkappa \|\PPPP_{(k-1)\ell}^V\wwww\|_{L^\infty_\wwww}
+C\,\|\PPPP_{(k-1)\ell}^V{\mathbf1}\|_{R_0},
$$
where $C>0$ does not depend on~$k$.
Iterating this inequality and using the relation $\|\PPPP_0^V\wwww\|_{L^\infty_\wwww}=1$ and the right-hand inequality in~\eqref{6.19}, we obtain 
$$
\|\PPPP_{k\ell}^V\wwww\|_{L^\infty_\wwww}\le\varkappa^{k }
+C(1-\varkappa)^{-1}\|\PPPP_{k\ell}^V{\mathbf1}\|_{R_0}. 
$$
Since $\PPPP_{k\ell}^V{\mathbf1}\ge{\mathbf1}$, this implies the required inequality~\eqref{6.15}. 
\end{proof}

\subsection{Existence of an eigenvector}
\label{s8.3}
In this section, as further preparation of application of Theorem~\ref{T:3.1}, we prove the existence of an eigenvector for the operator $\PPPP_1^{V*}:\MM_+(H^\ell)\to\MM_+(H^\ell)$ defined as the dual of~$\PPPP_1^V$:
\begin{equation} \label{6.101}
\langle f,\PPPP_k^{V*}\mu\rangle=\langle \PPPP_k^{V}f,\mu\rangle, \quad f\in C_b(H^\ell).
\end{equation}
To this end, we first introduce some notation. We define the kernel
\begin{equation} \label{6.024}
P_1^V(\uuu,\dd\vvv)=P_1^\ell(\uuu,\dd\vvv)e^{V(\vvv)}
\end{equation}
and denote by~$P_k^V(\uuu,\dd\vvv)$ its iterations. 
Given a number $s\in[0,1]$, let~$H_s$ be the (complex) interpolation space $[H,U]_s$, so that $H_0=H$, $H_1=U$, and the embedding $H_s\subset H$ is compact for $s\in(0,1]$; see~\cite{lunardi2009}. The norm in~$H_s$ is denoted by~$\|\cdot\|_s$, and we write~$B_s(R)$ for the ball in~$H_s$ of radius~$R$ centred at zero and~$B_s^\ell(R)$ for the direct product of~$\ell$ copies of~$B_s(R)$. Recall that the function $\wwww=\wwww_m$ was defined by~\eqref{6.21}.

\begin{proposition} \label{p6.4}
For any $V\in C_b(H^\ell)$ the operator~$\PPPP_1^{V*}$ has at least one eigenvector $\mu\in\PP(H^\ell)$ with a positive eigenvalue~$\lambda:$
\begin{equation} \label{6.22}
\PPPP_1^{V*}\mu=\lambda\mu.
\end{equation}
Moreover, any such eigenvector is concentrated on~$U^\ell$ and satisfies the following inequality for any integer $n\ge1$ and some positive numbers~$\varkappa$ and~$s=s_n\le1${\rm:}
\begin{equation} \label{6.23}
\int_{H^\ell}\biggl(\|\uuu\|_{s}^n+\sum_{j=1}^{\ell}e^{\varkappa\,\varPhi(u^j)}\biggr)\,\mu(\dd \uuu)<\infty.
\end{equation}
Finally, for any $k\ge0$, we have
\begin{equation} \label{6.24}
\|\PPPP_k^V\wwww\|_{L^\infty(B_s^\ell(R))}\int_{B_s^\ell(R)^c}\wwww(\uuu)\mu(\dd \uuu)\to0\quad\mbox{as $R\to\infty$}.
\end{equation}
\end{proposition}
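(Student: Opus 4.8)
The plan is to obtain the existence of an eigenvector by a Schauder--Tychonoff fixed-point argument on $\PP(H^\ell)$, and then to deduce concentration on $U^\ell$ and the integrability bounds a posteriori from the eigenrelation. Replacing $V$ by $V-\inf_{H^\ell}V$ and using $\PPPP_k^{V-c}=e^{-ck}\PPPP_k^V$, I may assume $V\ge0$, so that $1\le\PPPP_k^V{\mathbf1}\le e^{k\Osc(V)}$ and it suffices to produce an eigenvector with positive eigenvalue for the shifted $V$. Fix a large multiple $m$ of $\ell$ and let $\Lambda_m:\PP(H^\ell)\to\PP(H^\ell)$, $\Lambda_m\mu=\PPPP_m^{V*}\mu/\lag\PPPP_m^V{\mathbf1},\mu\rag$ (well defined since $\PPPP_m^V{\mathbf1}\ge1$). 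I would run $\Lambda_m$ on the convex set $\KK=\{\mu\in\PP(H^\ell):\lag W,\mu\rag\le M\}$, where
$$
W(\uuu)=\sum_{j=1}^\ell e^{\varkappa\varPhi(u^j)}+\varepsilon_0\sum_{j=1}^\ell\|u^j\|_s
$$
for small parameters $\varkappa>0$, $s\in(0,1]$, $\varepsilon_0>0$, with $\|\cdot\|_s$ the norm of $H_s$. Since $\varPhi$ is continuous, $\|\cdot\|_s$ is lower semicontinuous on $H$ and $B_s(R)\Subset H$, the function $W$ is lower semicontinuous with compact sublevel sets, so $\KK$ is weakly closed and—by Prokhorov's criterion, using $\mu(\{W>R\})\le M/R$—compact; and $\Lambda_m$ is weakly continuous on $\KK$ because $\PPPP_m^V$ maps $C_b(H^\ell)$ into itself while $\lag\PPPP_m^V{\mathbf1},\cdot\rag$ is continuous and $\ge1$. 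Invariance $\Lambda_m(\KK)\subset\KK$ for $M$ large reduces, via $\lag W,\Lambda_m\mu\rag=\lag\PPPP_m^VW,\mu\rag/\lag\PPPP_m^V{\mathbf1},\mu\rag\le aM+b$, to a Lyapunov inequality $\PPPP_m^VW\le aW+b$ on $H^\ell$ with $a<1$. A fixed point $\nu\in\KK$ then satisfies $\PPPP_m^{V*}\nu=\lambda_m\nu$ with $\lambda_m=\lag\PPPP_m^V{\mathbf1},\nu\rag\ge1$, and the normalised measure proportional to $\sum_{j=0}^{m-1}\lambda_m^{-j/m}\PPPP_j^{V*}\nu$ (non-zero, of finite mass) is an eigenvector of $\PPPP_1^{V*}$ with eigenvalue $\lambda=\lambda_m^{1/m}>0$.

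The Lyapunov inequality is the main obstacle. For the exponential term I would iterate~\eqref{1.04} (i.e.~\eqref{8.4}) over $m$ steps as in Step~3 of the proof of Proposition~\ref{p6.3}: after $m$ steps each coordinate $u_m^j$ has the form $S(u_t)+\eta_{t+1}$ with $t\ge1$, so $\varPhi(u_m^j)\le q^{\,t}\varPhi(u^\ell)+C\sum_iq^{\,t-i}\varPhi(\eta_i)$; taking $\varkappa C\le\delta$ makes the $\eta$-factors integrable by~\eqref{1.8}, while $q^{\,t}\to0$ as $m\to\infty$ lets one absorb $\Xi_V(m)\le e^{m\Osc(V)}$ and obtain $\PPPP_m^V\bigl(\sum_je^{\varkappa\varPhi(u^j)}\bigr)\le a_1\sum_je^{\varkappa\varPhi(u^j)}+b_1$ with $a_1<1$. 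For the interpolation term I would use the regularising bound furnished by Condition~(B) and $S(0)\in U$, namely $\|S(v)\|_s\le\|S(v)\|^{1-s}\|S(v)\|_U^s\le C\,e^{s\pppp(v)}(1+\varPhi(v))^{1/\alpha}+C$; combining it with the Markov property, the stabilisability~\eqref{1.7}--\eqref{1.09}, H\"older's inequality and~\eqref{1.8} gives $\PPPP_m^V\bigl(\sum_j\|u^j\|_s\bigr)\le C_2\sum_je^{\varkappa\varPhi(u^j)}+b_2$. The delicate point is that the exponent produced by~\eqref{1.7} (Jensen with exponent $s/\delta<1$, hence of order $s\rho/\delta$) must stay below $\varkappa$, which forces $s$ small; choosing then $\varepsilon_0$ small yields $\PPPP_m^VW\le(a_1+\varepsilon_0C_2)W+(b_1+\varepsilon_0b_2)$ with $a_1+\varepsilon_0C_2<1$. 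Everything else in the existence part is then routine.

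For concentration and moments: for any eigenvector $\mu$ with $\lambda>0$ one has $\mu=\lambda^{-(\ell+1)}\PPPP_{\ell+1}^{V*}\mu$, so $\lag f,\mu\rag=\lambda^{-(\ell+1)}\int\E_\uuu\bigl[\Xi_V(\ell+1)f(\uuu_{\ell+1})\bigr]\mu(\dd\uuu)$ for bounded Borel $f$, and by monotone convergence for nonnegative Borel $f$. Taking $f=I_{(U^\ell)^c}$ and noting that, by~\eqref{1.017}, every coordinate of $\uuu_{\ell+1}$ is of the form $S(\cdot)+\eta_k\in U$ (use $S(0)\in U$ with~\eqref{1.4}, and Condition~(C) for $\eta_k\in U$) gives $\mu\bigl((U^\ell)^c\bigr)=0$, i.e.\ $P_{\ell+1}^V(\uuu,\cdot)$ is carried by $U^\ell$. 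For~\eqref{6.23}: the constructed $\mu$ has $\lag W,\mu\rag<\infty$, hence $\int\sum_je^{\varkappa\varPhi(u^j)}\dd\mu<\infty$; applying the identity above with $f=\|u^j\|_s^n\wedge R$, letting $R\to\infty$, and using $\PPPP_{\ell+1}^V(\|u^j\|_s^n)\le Ce^{\varkappa\varPhi(u^\ell)}+C$—valid for $s=s_n$ so small that $ns_n\rho/\delta<\varkappa$—yields $\int\|u^j\|_s^n\dd\mu<\infty$, which gives~\eqref{6.23}. (For an arbitrary eigenvector one first recovers $\int\sum_je^{\varkappa\varPhi(u^j)}\dd\mu<\infty$ from the pointwise inequality $\PPPP_m^V\bigl(\sum_je^{\varkappa\varPhi(u^j)}\bigr)\le a_1\sum_je^{\varkappa\varPhi(u^j)}+b_1$, a truncation argument, and $\lambda^m\ge1$.) Finally \eqref{6.24} is immediate from the growth condition of Proposition~\ref{p6.3}: $\|\PPPP_k^V\wwww_m\|_{L^\infty(B_s^\ell(R))}\le M\|\PPPP_k^V{\mathbf1}\|_{R_0}\sup_{B_s^\ell(R)}\wwww_m\le C_kR^{m\beta}$ by~\eqref{1.03} and $H_s\hookrightarrow H$, whereas $\int_{B_s^\ell(R)^c}\wwww_m\dd\mu=O(R^{-p})$ for every $p$ by~\eqref{6.23}; taking $p>m\beta$ completes the argument.
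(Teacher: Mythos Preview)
Your proposal is correct and follows the same overall strategy as the paper: a Schauder--Tychonoff (in the paper, Leray--Schauder) fixed point for the normalised map $\nu\mapsto\PPPP_m^{V*}\nu/\PPPP_m^{V*}\nu(H^\ell)$ with $m$ a multiple of~$\ell$, followed by the averaging $\mu\propto\sum_{j=0}^{m-1}\lambda^{-j}\PPPP_j^{V*}\nu$ to descend to~$\PPPP_1^{V*}$, and finally the a~priori bounds~\eqref{6.23}, \eqref{6.24} deduced from the eigenrelation combined with the regularisation estimate $\|S(v)\|_s\le C\varPhi(v)^{1/\alpha}e^{s\pppp(v)}+C$ and the growth condition of Proposition~\ref{p6.3}.

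The technical packaging differs in two respects. First, the paper runs the fixed point on the polynomial-weight set $D_{A,m}=\{\langle\wwww_m,\nu\rangle\le A\}$, for which invariance is the one-line estimate~\eqref{6.17}, and then proves relative compactness of the \emph{image} $G(D_{A,m})$ separately via an $H_s$-moment bound~\eqref{6.38}; you instead build both invariance and compactness into a single Lyapunov function $W=\sum_j e^{\varkappa\varPhi(u^j)}+\varepsilon_0\sum_j\|u^j\|_s$, at the cost of a more involved contraction estimate (Jensen on $e^{\varkappa q^t\varPhi}$ and the stabilisability bound with exponent $s\rho/\delta$). Second, the paper establishes the a~priori estimates \emph{before} construction, exploiting the marginal structure to reduce everything to~$\mu_\ell$ (see~\eqref{6.35}--\eqref{6.36}), whereas you derive them \emph{after} from your Lyapunov inequality and a truncation argument. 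Both routes work; the paper's is slightly more economical for invariance but needs the extra compactness step, while yours front-loads the analytic work into~$W$.
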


\begin{proof}
{\it Step~1: A priori estimate\/}. 
We first prove that any eigenvector of~$\PPPP_1^{V*}$ is concentrated on~$U^\ell$ and satisfies~\eqref{6.23} (so that it belongs to~$\PP_{\wwww_m}(H^\ell)$ for any $m\ge1$). Indeed, let $\mu\in\PP(H^\ell)$ be a solution of~\eqref{6.22} with some $\lambda>0$. Then, for any integer $k\ge1$ and any non-negative function $f:H^\ell\to\R$, we have
\begin{align} 
\int_{H^\ell}f(\vvv)\,\mu(\dd\vvv)&=\lambda^{-k}\int_{H^\ell}\mu(\dd\vvv)\int_{H^\ell}P_k^V(\vvv,\dd\uuu)f(\uuu)
\notag\\
&\le \lambda^{-k}e^{k\|V\|_\infty}\int_{H^\ell}\E\,f\bigl(\Ss_k(\vvv;\eta_1,\dots\eta_k)\bigr)\mu(\dd\vvv)
\notag\\
&=\lambda^{-k}e^{k\|V\|_\infty}\int_{H^\ell}\mu(\dd\vvv)\int_{H^\ell}P_k^\ell(\vvv,\dd\uuu)f(\uuu). 
\label{6.32}
\end{align}
Taking $k=1$ and $f(\vvv)=g(v^j)$ with $j=1,\dots,\ell-1$, where $g:\R\to\R$ is any non-negative function, and using~\eqref{1.12}, we see that 
$$
\int_{H^\ell}g(v^j)\,\mu(\dd\vvv)\le \lambda^{-1} e^{\|V\|_\infty}\int_{H^\ell}g(v^{j+1})\,\mu(\dd\vvv).
$$
Iterating this inequality, for $j=1,\dots,\ell-1$ we obtain
$$ 
\int_Hg(v)\mu_j(\dd v)=\int_{H^\ell}g(v^j)\,\mu(\dd\vvv)\le \lambda^{j-\ell}e^{(\ell-j)\|V\|_\infty}\int_Hg(v)\mu_\ell(\dd v),
$$ 
where~$\mu_j$ stands for the~$j^{\mathrm{th}}$ marginal of~$\mu$. 
In particular, to prove~\eqref{6.23}, it suffices to show that
\begin{equation} \label{6.34}
\int_H\bigl(\|z\|_s^n+e^{\varkappa\varPhi(z)}\bigr)\mu_\ell(\dd z)<\infty. 
\end{equation}
Taking again $k=1$ and $f(\vvv)=g(v^\ell)$ in~\eqref{6.32}, we see that
\begin{equation} \label{6.35}
\langle g,\mu_\ell\rangle\le \lambda^{-1}e^{\|V\|_\infty}\int_H\E\,g\bigl(S(z)+\eta_1\bigr)\mu_\ell(\dd z). 
\end{equation}
For $g(z)=e^{\varkappa\varPhi(z)}$, using~\eqref{1.04} and~\eqref{1.8}, we obtain
$$
\langle e^{\varkappa\varPhi},\mu_\ell\rangle 
\le C_1\langle e^{\varkappa\varPhi},\mu_\ell\rangle^q,
$$
where $C_1=\lambda^{-1}e^{\|V\|_\infty} \E\,e^{C\varkappa\varPhi(\eta_1)}$. Hence, taking $\varkappa>0$ so small that $C\varkappa\le\delta$, we obtain\footnote{The derivation of~\eqref{6.36} is formal. To obtain an accurate justification, it suffices to apply the above argument to bounded approximations of~$e^{\varkappa\varPhi}$ and then pass to the limit with the help of the Fatou lemma.}
\begin{equation} \label{6.36}
\langle e^{\varkappa\varPhi},\mu_\ell\rangle=\int_He^{\varkappa\varPhi(z)}\mu_\ell(\dd z)\le C_2. 
\end{equation}
We have thus proved that the integral of the second term in~\eqref{6.34} is finite. To derive a bound for the integral of the first term, we use~\eqref{1.7} and~\eqref{1.09} to write
\begin{equation} \label{6.25}
\E_{\mu}\exp\bigl\{\delta\bigr(\pppp(u_\ell^1)+\cdots+\pppp(u_\ell^\ell)\bigl)\bigr\}
\le e^{c\ell}\int_He^{\rho\,\varPhi(z)}\mu_\ell(\dd z),
\end{equation}
where $\uuu_k=[u_k^1,\dots,u_k^\ell]$ is the trajectory of~\eqref{1.015}. Decreasing, if necessary, the number $\delta>0$, we can assume that~$\rho$ is no larger than the constant~$\varkappa$ in~\eqref{6.36}. We see that the left-hand side of~\eqref{6.25} is finite. On the other hand, in view of the equality in~\eqref{6.32} with $k=\ell$ and $f(\vvv)=\exp\{\delta(\pppp(v^1)+\cdots+\pppp(v^\ell))\}$, we have
\begin{equation} \label{6.26}
C_3:=\int_{H^\ell}\exp\bigl\{\delta\bigl(\pppp(v^1)+\cdots+\pppp(v^\ell)\bigr)\bigr\}\mu(\dd \vvv)<\infty. 
\end{equation}
It follows from~\eqref{1.03} and~\eqref{1.04} that
\begin{equation} \label{6.29}
\|S(v)\|\le C_4(\varPhi(v)^{1/\alpha}+1), \quad v\in H.
\end{equation}
Furthermore, using~\eqref{1.4} and the inclusion $S(0)\in U$, we see that
\begin{equation} \label{6.27}
\|S(v)\|_U\le e^{\pppp(v)}\|v\|+C_5.
\end{equation}
Combining~\eqref{6.27} and~\eqref{6.29}, using the right-hand inequality in~\eqref{1.03}, and interpolating between~$H$ and~$U$, we derive
\begin{equation} \label{6.30}
\|S(v)\|_s^n\le C_6\bigl(\varPhi_m(v)+e^{2sn\pppp(v)}+1\bigr),
\end{equation}
where $m\ge2n/\alpha$ is an integer. Using again the interpolation inequality, we see that
\begin{equation} \label{6.31}
\|S(v)+\eta_1\|_s^n\le C_7\bigl(\varPhi_m(v)+e^{2sn\pppp(v)}+\|\eta_1\|_U^{2sn}+\|\eta_1\|^{2n}+1\bigr).
\end{equation}
Taking the mean value, integrating over $\mu_\ell(\dd v)$, and recalling~\eqref{6.35}, we obtain
\begin{align*}
\int_H\|u\|_s^n\mu_\ell(\dd u)&\le\lambda^{-1}e^{\|V\|_\infty}
\int_H\E\,\|S(v)+\eta_1\|_s^n\mu_\ell(\dd v)\\
&\le C_8\bigl(\langle \varPhi_m+e^{2sn\pppp},\mu_\ell\rangle+\E\,\|\eta_1\|_U^{2sn}+\E\,\|\eta_1\|^{2n}+1\bigr)<\infty,
\end{align*}
where we chose~$s\in(0,1]$ so small that $2sn\le\min\{1,\delta\}$ and used inequalities~\eqref{1.8}, \eqref{1.08}, \eqref{6.36}, and~\eqref{6.26}. This completes the proof of~\eqref{6.34} and that of~\eqref{6.23}. The latter estimate also implies that $\mu\in\PP_{\wwww_m}(H^\ell)$ for any $m\ge1$. 

It remains to prove that $\mu(U^\ell)=1$. In view of~\eqref{1.12} and \eqref{1.08}, it suffices to check that $S(v)\in U$ almost surely for any random variable~$v$ whose law coincides with~$\mu_\ell$. It follows from~\eqref{6.27} and~\eqref{6.26} that, for any $R>0$, 
$$
\E\,\bigl(I_{B_H(R)}\|S(v)\|_U^{\delta}\bigr)\le R^\delta\, \E\,e^{\delta\pppp(v)}+C_5^\delta\le R^\delta C_3+C_5^\delta.
$$
Since $R>0$ is arbitrary, this inequality proves that $S(v)\in U$ almost surely.

\smallskip
{\it Step~2: Decay}. 
Let us show that~\eqref{6.23} implies~\eqref{6.24}. Indeed, it follows from~\eqref{1.017} and~\eqref{3.02} (with~$\varPhi$ replaced by~$\varPhi_m$) that 
\begin{equation} \label{6.036}
\|\PPPP_k^V\wwww\|_{L^\infty(B_s^\ell(R))}\le e^{k\|V\|_\infty}\sup_{\uuu\in B_s^\ell(R)}\E_\uuu\wwww(\uuu_k)
\le C_9(k)\,\sup_{u\in B_H(R)}\varPhi_m(u).
\end{equation}
On the other hand, we have $(B_s^\ell(R))^c\subset\cup_{j=1}^\ell\Gamma_j(R)$, where 
$$
\Gamma_j(R)=\{[u^1,\dots,u^\ell]\in H^\ell:u^j\notin B_s(R)\}.
$$ 
Using the Cauchy--Schwarz inequality, we derive
\begin{align} 
\int_{B_s^\ell(R)^c}\wwww(\uuu)\mu(\dd \uuu)
&\le\sum_{j=1}^\ell\int_{\Gamma_j(R)}\wwww(\uuu)\,\mu(\dd\uuu)\notag\\
&\le \langle\wwww^2,\mu\rangle^{1/2} \sum_{j=1}^\ell \mu\bigl(\Gamma_j(R)\bigr)^{1/2}. 
\label{6.40}
\end{align}
It follows from~\eqref{6.23} and the Chebyshev inequality that $ \langle\wwww^2,\mu\rangle<\infty$ and 
$$
\mu\bigl(\Gamma_j(R)\bigr)=\mu_j\bigl(B_s(R)^c\bigr)\le C_{10}(n)R^{-n}. 
$$
Combining this with~\eqref{6.40}, \eqref{6.036}, and the right-hand inequality in~\eqref{1.03} and choosing $n>m\beta$, we obtain~\eqref{6.24}.

\smallskip
{\it Step~3: Construction of eigenvector}. 
To construct a measure $\mu\in\PP(H^\ell)$ satisfying~\eqref{6.22}, we first remark that it suffices to construct an eigenvector for~$\PPPP_\ell^{V*}$. Indeed, suppose that $\mu'\in\PP_\wwww(H^\ell)$ is an eigenvector for~$\PPPP_\ell^{V*}$ with an eigenvalue $\lambda'>0$. Then
$$
(\PPPP_1^{V*}-\lambda)\mu=0, \quad 
\mu=c\,\bigl(\PPPP_{\ell-1}^{V*}+\lambda\PPPP_{\ell-2}^{V*}+\cdots+\lambda^{\ell-1}I\bigr)\mu',
$$
where $\lambda=(\lambda')^{1/\ell}$, and~$c>0$ is a normalising constant chosen so that $\mu(H^\ell)=1$. 

To construct an eigenvector for~$\PPPP_\ell^{V*}$, we use the Leray--Schauder fixed point theorem; e.g., see Chapter~14 in~\cite{taylor1996}. Let us fix $s\in(0,1]$, $\varkappa>0$, and $n\ge1$, and given~$A>0$, define the convex set 
$$
D_{A,m}=\bigl\{\nu\in\PP(H^\ell): \langle\wwww_m,\nu\rangle\le A\bigr\}.
$$
A simple application of the Fatou lemma shows that~$D_{A,m}$ is closed in~$\PP(H^\ell)$. Let us define a continuous mapping $G:D_{A,m}\to \PP(H^\ell)$ by the relation
$$
G(\nu)=\frac{\PPPP_\ell^{V*}\nu}{\PPPP_\ell^{V*}\nu(H^\ell)}. 
$$
We claim that $G(D_{A,m})\subset D_{A,m}$ for an appropriate choice of~$A$ and~$m$, and that~$G(D_{A,m})$ is relatively compact in~$\PP(H^\ell)$. Once this is proved, the existence of a measure $\mu\in D_{A,m}$ satisfying~\eqref{6.22} will follow from the Leray--Schauder theorem. 

It follows from~\eqref{6.17} and~\eqref{1.8} that 
\begin{align}
\bigl\langle\wwww_m,G(\nu)\bigr\rangle
&\le \exp\{\ell\Osc_H(V)\}\int_{H^\ell}\E\,\wwww_m\bigl(\Ss_\ell(\uuu;\eta_1,\dots,\eta_\ell)\bigr)\nu(\dd\uuu)
\notag\\
&\le \varkappa_{m,\ell}\exp\{\ell\Osc_H(V)\}\langle\wwww_m,\nu\rangle+C_{11}. 
\end{align}
Since $\varkappa_{m,\ell}\to0$ as $m\to\infty$, we conclude that $G(D_{A,m})\subset D_{A,m}$ if $A\ge 2C_{11}$ and $m\ge1$ is sufficiently large. In view of the Prokhorov compactness criterion (see Theorem~11.5.4 in~\cite{dudley2002}), to prove that~$G(D_{A,m})$ is relatively compact it suffices to check that
\begin{equation} \label{6.38}
\int_{H^\ell}\|\uuu\|_s \nu'(\dd\uuu)\le C_{12}\quad
\mbox{for any $\nu'\in G(D_{A,m})$},
\end{equation}
where $s>0$. It follows from~\eqref{1.017} and the boundedness of~$V$ that inequality~\eqref{6.38} will be established if we prove that, for sufficiently large $m\ge1$ and $A>0$, 
\begin{equation} \label{6.39}
\int_{H}\|u\|_s (\PPPP_k^*\nu)(\dd u)\le C_{13}\quad\mbox{for $\nu\in \PP(H)$, $\langle\varPhi_m,\nu\rangle\le A$},
\end{equation}
where $1\le k\le\ell$.  Inequality~\eqref{3.02} with~$\varPhi$ replaced by~$\varPhi_m$ implies that if $\nu\in \PP(H)$ is such that $\langle\varPhi_m,\nu\rangle\le A$ with $A\gg1$, then $\langle\varPhi_m,\PPPP_k^*\nu\rangle\le A$ for $k\ge1$. Thus, it suffices to prove~\eqref{6.39} for $k=1$. To this end, it suffices to take the mean value of both sides of~\eqref{6.31} and to integrate in $\nu(\dd v)$. The proof of Proposition~\ref{p6.4} is complete. 
\end{proof}

\subsection{Multiplicative  ergodic theorem}
\label{s8.5}
In this section, we apply Theorem~\ref{T:3.1} to obtain a detailed description of the large-time behaviour of the semigroups~$\{\PPPP_k^{V}\}$ and~$\{\PPPP_k^{V*}\}$. This results will be used in the next two subsections to establish Properties~1 and~2 of Section~\ref{s1.5} needed to prove Theorem~\ref{t1.2}. 

We first introduce some notation. Recall that the orthonormal basis~$\{e_j\}$ was defined in Condition~(C) (see Section~\ref{s1.1}). We denote by~$\VV$ the set of functions $V\in C_b(H^\ell)$ that can be represented in the form 
\begin{equation} \label{6.51}
V(\uuu)=F({\mathsf P}_N\uuu), \quad {\mathsf P}_N\uuu=[{\mathsf P}_Nu^1,\dots,{\mathsf P}_Nu^\ell],
\end{equation}
where $N\ge1$ is an integer and $F\in C_b^1(H_N^\ell)$. It is straightforward to check that~$\VV$ is a vector space containing constant functions such that the intersection $\CC=\VV\cap C_+(H^\ell)$ is a determining family for~$\PP(H^\ell)$. 

\begin{theorem} \label{t6.5}
Let us assume that the hypotheses of Theorem~\ref{t1.2} are satisfied. Then, for any $V\in\VV$, there is an integer $m_0=m_0(V)\ge1$ such that the following assertions hold for $m\ge m_0$:
\begin{description}
\item[Existence and uniqueness.]
The measure $\mu=\mu_V$ constructed in Proposition~\ref{p6.4} is the only eigenvector of~$\PPPP_1^{V*}$ belonging to~$\PP_{\wwww_m}(H^\ell)$. Moreover, the operator~$\PPPP_1^V$ has a unique eigenvector~$h_V$ in $C_{\wwww_m}(H^\ell)\cap C_+(H^\ell)$ normalised by the condition $\langle h_V,\mu_V\rangle=1$.
\item[Convergence.] 
For any $f\in C_{\wwww_m}(H^\ell)$, $\nu\in \PP_{\wwww_m}(H^\ell)$, and $R>0$, we have 
\begin{align}
\lambda_V^{-k}\PPPP_k^V f&\to\lag f,\mu_V\rag h_V
\quad\mbox{in~$C_b(B^\ell(R))\cap L^1(H^\ell,\mu_V)$ as~$k\to\infty$}, \label{5.24}\\
\lambda^{-k}_V\PPPP_k^{V*}\nu&\to\lag h_V,\nu\rag\mu_V
 \quad\mbox{in~$\MM_+(H^\ell)$ as~$k\to\infty$}. \label{5.25}
\end{align}
\end{description}
\end{theorem}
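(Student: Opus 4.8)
The plan is to verify the hypotheses of Theorem~\ref{T:3.1} for the generalised Markov kernel $P_1^V(\uuu,\dd\vvv)=P_1^\ell(\uuu,\dd\vvv)e^{V(\vvv)}$ on the Polish space $X=H^\ell$, with the weight function $\wwww=\wwww_m$ of~\eqref{6.21} and the compact sets $X_R$ taken (in view of the remark after Proposition~\ref{p6.3}) to be the direct product of $\ell$ copies of $\{u\in B_s(R):\pppp(u)\le R\}$ for a suitable interpolation exponent $s\in(0,1]$; these sets are compact in $H^\ell$ by the compact embedding $H_s\Subset H$ and continuity of $\pppp$. Once the three structural hypotheses of Theorem~\ref{T:3.1} — the growth condition, the uniform Feller property, and uniform irreducibility — are checked, together with the existence of an eigenvector $\mu_V$ satisfying the decay condition~\eqref{3.5}, the theorem delivers: uniqueness of $\mu_V$ in $\PP_{\wwww_m}(H^\ell)$; the eigenfunction $h_V\in L^\infty_{\wwww_m}(H^\ell)\cap C_+$ normalised by $\langle h_V,\mu_V\rangle=1$ with $\PPPP_1^Vh_V=\lambda_Vh_V$, $\lambda_V>0$; and convergence~\eqref{3.7}, which is precisely~\eqref{5.24} once we observe that $B^\ell(R)$ satisfies condition~\eqref{3.8} — this last point, via~\eqref{3.9}, upgrades $C(X_R)$-convergence to $C_b(B^\ell(R))$-convergence. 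The dual convergence~\eqref{5.25} follows from~\eqref{5.24} by testing against $f\in C_{\wwww_m}(H^\ell)$ and using that the family $\CC=\VV\cap C_+(H^\ell)$ is determining together with the uniform bound~\eqref{3.15}.

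The ingredients are assembled as follows. The growth condition~\eqref{3.2} has already been established in Proposition~\ref{p6.3} for $m\ge m_0$, with $P_{\ell+1}^V(\uuu,\cdot)$ concentrated on $U^\ell\subset X_\infty$. The existence of an eigenvector $\mu_V\in\PP(H^\ell)$ with positive eigenvalue, its concentration on $U^\ell$, the moment bound~\eqref{6.23}, and the decay property~\eqref{6.24} — which is exactly condition~\eqref{3.5} for the present choice of $X_R$ — are all provided by Proposition~\ref{p6.4}; in particular $\mu_V\in\PP_{\wwww_m}(H^\ell)$ for all $m\ge1$. Uniform irreducibility~\eqref{3.4} for the kernel $P_l^V$ follows from the corresponding property of the underlying Markov $\ell$-process $P_l^\ell$ — which is a consequence of Condition~(C) with all $b_j\ne0$ via an approximate-controllability argument (the noise has a positive $C^1$ density in every finite-dimensional direction, and one drives the first $N$ modes to a target while the dissipativity~\eqref{1.04} contracts the tail) — combined with the lower bound $e^{V}\ge e^{-\|V\|_\infty}$, which only costs a harmless multiplicative constant absorbed into $p=p(\rho,r)$. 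The remaining structural hypothesis, the uniform Feller property for $\{\PPPP_k^V\}$ on the sets $X_R$, is not proved here; it is the content of Section~\ref{s7}, and we take it for granted as announced at the end of Section~\ref{s1.5}.

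It then remains to run Theorem~\ref{T:3.1} and translate its output. Uniqueness of $h_V$ in $C_{\wwww_m}\cap C_+$ follows because any two such eigenfunctions have the same eigenvalue (equal to $\lambda_V$, forced by applying $\PPPP_k^{V*}\mu_V=\lambda_V^k\mu_V$ and integrating) and, after normalisation $\langle h_V,\mu_V\rangle=1$, their difference $g$ satisfies $\PPPP_k^Vg=\lambda_V^kg$ with $\langle g,\mu_V\rangle=0$, so $\lambda_V^{-k}\PPPP_k^Vg\to0$ by~\eqref{3.7} while the left side is just $g$; hence $g\equiv0$ on $X_\infty$, and since $\mu_V$ has full support and $h_V=\lambda_V^{-1}\PPPP_1^Vh_V$ is determined on all of $H^\ell$ by its values on $X_\infty$, we get $g\equiv0$. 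The main obstacle in this section — apart from the black-boxed uniform Feller property — is making sure the compact sets $X_R$ are chosen so that \emph{both} the growth condition of Proposition~\ref{p6.3} (which needs $X_R$ built from $B_U(R)$, or any Banach space between $U$ and $H$) \emph{and} the decay condition~\eqref{6.24} of Proposition~\ref{p6.4} (which is phrased with $B_s^\ell(R)$) hold simultaneously; this is exactly why one works with the interpolation spaces $H_s=[H,U]_s$ and chooses $s$ small, and one must also track the dependence of $m_0$ on $V$ through $\|V\|_\infty$ so that all the moment and recurrence estimates close for the same $m\ge m_0(V)$.
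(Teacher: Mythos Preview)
Your overall plan---verify the hypotheses of Theorem~\ref{T:3.1} for the kernel $P_1^V$ with $X_R$ chosen as in~\eqref{6.44}, feed in Propositions~\ref{p6.3} and~\ref{p6.4} together with the uniform Feller property from Section~\ref{s7}, and read off the conclusions---matches the paper. But the translation step ``convergence~\eqref{3.7}, which is precisely~\eqref{5.24}'' is a genuine gap.

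Theorem~\ref{T:3.1} delivers~\eqref{3.7} and~\eqref{3.9} only for $f\in\CC^\wwww$, the $L^\infty_{\wwww_m}$-closure of $\lspan(\VV)$. Since $\VV\subset C_b(H^\ell)$, no function in $\lspan(\VV)$ can approximate an unbounded element of $C_{\wwww_m}(H^\ell)$ in the $L^\infty_{\wwww_m}$-norm: for instance, if $g\in C_b$ with $\|g\|_\infty=M$, then $|\wwww_m(\uuu)-g(\uuu)|/\wwww_m(\uuu)\ge 1/2$ whenever $\wwww_m(\uuu)>2M$, so $\wwww_m\notin\CC^{\wwww_m}$. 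Hence $\CC^{\wwww_m}\subsetneq C_{\wwww_m}(H^\ell)$, and~\eqref{3.7} does not directly yield~\eqref{5.24} for general $f\in C_{\wwww_m}$. The paper closes this gap in two stages (Steps~4 and~5): first extend from $f\in\VV$ to $f\in C_b(H^\ell)$ by buc-approximation $f_n=f\circ\mathsf P_n\to f$ uniformly on compacts, controlling the tail outside $X_\rho$ via Lemma~\ref{l6.7}, which bounds $\sup_k\|\PPPP_k^VF_s\|_{L^\infty_\wwww}/\|\PPPP_k^V\mathbf1\|_{R_0}$ for $F_s(u)=\log(1+\|u\|_s)$; then extend from $C_b$ to $C_{\wwww_m}$ by truncating $f_n=f^+\wedge n-f^-\wedge n$ and observing that $f_n\to f$ in $L^\infty_{\wwww_{m+1}}$ (the larger weight), so that the growth condition~\eqref{3.2} with $\wwww=\wwww_{m+1}$ absorbs the error. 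You also need Lemma~\ref{l6.6} to verify~\eqref{3.8} for $B=B^\ell(R)$; you mention this, but note that even with~\eqref{3.8} in hand,~\eqref{3.9} is still only for $f\in\CC^\wwww$.

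A secondary gap: Theorem~\ref{T:3.1} gives $h_V\in L^\infty_{\wwww_m}(H^\ell)$ with restriction to each $X_R$ continuous and positive, but the statement claims $h_V\in C_{\wwww_m}(H^\ell)\cap C_+(H^\ell)$. Continuity and positivity on \emph{all} of $H^\ell$ are not automatic and require the argument of the paper's Step~3, which uses the eigenfunction relation $h_V=\lambda_V^{-\ell-1}\PPPP_{\ell+1}^Vh_V$ together with the fact that trajectories land in $U^\ell\subset X_\infty$ after $\ell+1$ steps (so the integrand is controlled where $h_V$ is already known to be continuous and positive), plus a dominated-convergence argument on the event $G_R(\uuu,\vvv)$ to propagate continuity to arbitrary $\uuu\in H^\ell$. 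Your remark that ``$h_V=\lambda_V^{-1}\PPPP_1^Vh_V$ is determined on all of $H^\ell$ by its values on $X_\infty$'' gestures at this but does not carry it through.
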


\begin{proof}
We shall show that the hypotheses of Theorem~\ref{T:3.1} are satisfied for an appropriate choice of compact sets~$X_R\subset H^\ell$ and the function $\wwww=\wwww_m$ defined in~\eqref{6.21}. The conclusions of that theorem, combined with some simple arguments, will imply all required results. 

\medskip
{\it Step~1: Framework and sketch\/}.
Let us fix a function $V\in\VV$. We apply Theorem~\ref{T:3.1} in which
\begin{equation} \label{6.44}
X=H^\ell, \quad X_R=\bigl(B_s(R)\cap\{\pppp\le R\}\bigl)^\ell,  \quad P(\uuu,\dd \vvv)=P_1^\ell(\uuu,\dd\vvv)e^{V(\vvv)},
\end{equation}
 where $s\in(0,1]$ will be chosen later and $\{\pppp\le R\}=\{u\in H:\pppp(u)\le R\}$. In particular, the semigroups~$\PPPP_k^V$ and~$\PPPP_k^{V*}$ are given by~\eqref{6.3} and~\eqref{6.101}, respectively. The subset $X_\infty=\cup_{R\ge 1} X_R$ coincides with~$(H_s)^\ell\supset U^\ell$, which is dense in~$X$. The weight function~$\wwww:X\to\R_+$ is defined by~\eqref{6.21} and is continuous on~$X$.  As is proved in Proposition~\ref{p6.3}, if the integers~$m\ge1$ and~$R_0\ge1$ are sufficiently large, then the growth condition~\eqref{3.2} is satisfied, and Proposition~\ref{p6.4} ensures the existence of an eigenvector $\mu\in\PP_\wwww(H^\ell)$ for~$\PPPP_1^{V*}$, which satisfies~\eqref{6.23} and~\eqref{6.24}. Thus, to apply Theorem~\ref{T:3.1}, we need to check the uniform Feller and irreducibility properties (see Step~2 below). Once it is done, we can conclude that~$\PPPP_1^V$ has an eigenvector $h_V\in L_\wwww^\infty(X)$ whose restriction to~$X_R$ is continuous and strictly positive for any $R\ge1$. We show in Step~3 that~$h_V$ is continuous and positive on~$X$ and, therefore, belongs to~$C_\wwww(X)$. The uniqueness of eigenvectors for~$\PPPP_1^{V}$ and~$\PPPP_1^{V*}$ in the spaces $C_\wwww(X)\cap C_+(X)$ and~$\PP_\wwww(X)$, respectively, follow immediately from convergences~\eqref{5.24} and~\eqref{5.25}, of which the second is a straightforward consequence of the first. The proof of~\eqref{5.24} is carried out in Steps~4 and~5.

\smallskip
{\it Step~2: Uniform Feller and irreducibility properties\/}. 
The uniform Feller property is the crucial step of the proof and will be established in Section~\ref{s7} with the help of a coupling construction. Let us prove the uniform irreducibility. Since~$V$ is bounded, we have
$$
P_k^V(\uuu,\dd\vvv)\ge e^{-k\|V\|_\infty}P_k^\ell(\uuu,\dd\vvv)\quad\mbox{for any $\uuu\in X$}.
$$
Thus, in view of continuity of~$\pppp$, it suffices to prove that, for any integers~$\rho,R\ge1$ and any $r>0$, there exists an integer $l\ge1$ and a  positive number $p=p(\rho,r)$ satisfying the inequalities
\begin{equation} \label{6.46}
\IP_\uuu\bigl\{\|\uuu_l-\hat\uuu\|_s\le r,\uuu_l\in X_\rho\bigr\}\ge p\quad
\mbox{for any $\uuu\in X_R$, $\hat\uuu\in X_\rho$}. 
\end{equation}
This type of result is  well known in the theory of randomly forced PDE's (e.g., see Proposition~4.6 in~\cite{KS-mpag2001}), and we only outline its proof. 

In view of~\eqref{1.017}, inequality~\eqref{6.46} will be established if we show that 
\begin{equation} \label{6.47}
\IP_u\bigl\{\|u_{l+j}-\hat u_j\|_s\le r, u_{l+j}\in B_s(\rho)
\mbox{ for $j=1,\dots,\ell$}\bigr\}\ge p,
\end{equation}
where $\{u_k\}$ is the trajectory of~\eqref{1.1},  $u\in B_H(R)$ and $\hat u_1,\dots\hat u_\ell\in B_s(\rho)$ are arbitrary vectors. It follows from~\eqref{3.02} that, for sufficiently large $\widehat R>0$ and $\widehat m(R)\ge1$, the probability of transition, at time $\widehat m(R)$, from any point $u\in B_H(R)$ to the ball $B_H(\widehat R)$ is no less than~$1/2$. Therefore, in view of the Kolmogorov--Chapman relation, it suffices to prove~\eqref{6.47} for $u\in B_H(\widehat R)$ and $l=1$. The argument used in the derivation of~\eqref{6.14} (see~\eqref{6.015} with $k=\ell+1$) shows that, for a sufficiently large $K>0$, we have
\begin{equation} \label{6.49}
\IP_u\bigl\{\|S(u_{j})\|_U\le K \mbox{ for $j=1,\dots,\ell$}\bigr\}\ge \frac12. 
\end{equation}
Furthermore, since the law  of~$\eta_k$ is concentrated on~$U$, it follows from~\eqref{1.08} that
$$
\IP\{\|w+\eta_1-\hat v\|_s\le r\}\ge \e_r\quad\mbox{for $w\in B_U(K)$, $\hat v\in B_s(\rho)$},
$$
where $s\in(0,1)$ is any number, and $\e_r>0$ depends only on~$s$, $K$, and~$\rho$. Combining this with~\eqref{6.49} and the Markov property, we obtain~\eqref{6.47} with $l=1$ and $p_1=\e_r^\ell$. 

\smallskip
{\it Step~3: Positivity  and continuity of~$h_V$\/}. 
Since~$h_V$ is an eigenvector of~$\PPPP_1^V$ with the eignevalue~$\lambda_V$, we have
\begin{align}
h_V(\uuu)
&=\lambda_V^{-\ell-1}\E_\uuu\bigl\{\exp\bigl(V(\uuu_1)+\cdots+V(\uuu_{\ell+1})\bigr)h_V(\uuu_{\ell+1})\bigr\} \notag\\
&\ge\lambda_V^{-\ell-1}e^{-(\ell+1)\|V\|_\infty}\,\E_\uuu h_V(\uuu_{\ell+1}).
\label{6.50}
\end{align}
In view of~\eqref{6.14}, for any $\uuu\in X$, the vector function $\uuu_{\ell+1}$ belongs to~$U^\ell$ with $\IP_\uuu$-probability~$1$. Since~$h_V$ is positive on~$X_\infty=(H_s)^\ell$, we see that the expectation on the right-hand side of~\eqref{6.50} is positive, whence it follows that $h_V(\uuu)>0$ for any $\uuu\in X$.

To prove the continuity of~$h_V$, let us set $\vec\eta_k=(\eta_1,\dots,\eta_k)$ and denote by $f(\uuu,\vec{\eta}_{\ell+1})$ the expression under the expectation~$\E_\uuu$ in the first line of~\eqref{6.50}. Given two initial points $\uuu,\vvv\in X$ and a number~$R>0$, we introduce the event
$$
G_R(\uuu,\vvv)=\{\pppp(u_\ell^1)+\cdots+\pppp(u_\ell^\ell)+\pppp(v_\ell^1)+\cdots+\pppp(v_\ell^\ell)\le R\},
$$
where $\uuu_k$ and $\vvv_k$ are the trajectories of~\eqref{1.015} corresponding to the initial points~$\uuu$ and~$\vvv$. It follows from~\eqref{1.7} that when~$\uuu$ and~$\vvv$ vary in a bounded set in~$X$, the probability of~$(G_R)^c$ goes to zero as $R\to\infty$. Now note that, in view of the first line in~\eqref{6.50},
\begin{multline} \label{6.051}
|h_V(\uuu)-h_V(\vvv)|
\le \lambda_V^{-\ell-1}\E \bigr\{I_{G_R(\uuu,\vvv)}\bigl|f(\uuu,\vec{\eta}_{\ell+1})-f(\vvv,\vec{\eta}_{\ell+1})\bigr|\bigl\}\\
+C_1\IP(G_R(\uuu,\vvv)^c)^{1/2}\bigl(\PPPP_{\ell+1}h_V^2(\uuu)+\PPPP_{\ell+1}h_V^2(\vvv)\bigr)^{1/2},
\end{multline}
where $C_1$ depends only on~$\|V\|_\infty$. 
Since $h_V^2\in L_{\wwww_{2m}}^\infty$, it follows from inequality~\eqref{6.17} (with~$\ell$ and~$m$ replaced by~$\ell+1$ and $2m$, respectively) that the second term on the right-hand side of~\eqref{6.051} goes to zero as $R\to\infty$, uniformly with respect to~$\uuu$ and~$\vvv$ varying in a bounded set in~$X$. Since~$V$ is bounded and $h_V\in L_\wwww^\infty(X)$, it follows from inequality~\eqref{6.17} with~$\ell$ replaced by~$\ell+1$ that 
$$
|f(\uuu;\vec\eta_{\ell+1})-f(\vvv;\vec\eta_{\ell+1})|
\le C_2\bigl(\wwww(\uuu)+\wwww(\vvv)+\varPhi_m(\eta_1)+\cdots+\varPhi_m(\eta_{\ell+1})\bigr). 
$$
The right-hand side of this inequality an integrable function, and by the Lebesgue theorem on dominated convergence, the continuity of~$h_V$ will be established if we prove that, with probability~$1$, 
$$
I_{G_R(\uuu,\vvv)}|f(\uuu;\vec\eta_{\ell+1})-f(\vvv;\vec\eta_{\ell+1})|\to0\quad\mbox{as $\uuu\to\vvv$ in~$X$}. 
$$

To this end, recall that~$V$ is continuous on~$X$ and that the restriction of~$h_V$ to~$X_\infty$ is also continuous. Thus, it suffices to prove 
$$
I_{G_R(\uuu,\vvv)}\|\Ss_{\ell+1}(\uuu,\vec{\eta}_{\ell+1})-\Ss_{\ell+1}(\vvv,\vec{\eta}_{\ell+1})\|_s\to0
\quad\mbox{as $\uuu\to \vvv$}. 
$$
In view of~\eqref{1.017}, this is equivalent to the almost sure convergence 
\begin{equation} \label{6.52}
I_{G_R(\uuu,\vvv)}\sum_{k=2}^{\ell+1}\|S_k(u^\ell,\vec{\eta}_k)-S_k(v^\ell,\vec{\eta}_k)\|_s\to0
\quad\mbox{as $\uuu\to \vvv$},
\end{equation}
where~$S_k$ stands for the trajectory of~\eqref{1.1} at time~$k$.
To prove~\eqref{6.52}, let us note that, in view of~\eqref{1.4}, on the set $G_R(\uuu,\vvv)$ we have
$$
\|S_k(u^\ell,\vec{\eta}_{\ell+1})-S_k(v^\ell,\vec{\eta}_{\ell+1})\|_U
\le e^R\|S_{k-1}(u^\ell,\vec{\eta}_{k-1})-S_{k-1}(v^\ell,\vec{\eta}_{k-1})\|. 
$$
The continuity of~$S:H\to H$ implies that the mapping $u\mapsto S_k(u,\vec\eta_k)$ is also continuous on~$H$, and the last inequality proves the required convergence~\eqref{6.52}. 

\smallskip
{\it Step~4: Proof of~\eqref{5.24} for bounded continuous functions\/}. 
In view of~\eqref{3.7}, for any $f\in \VV$ we have
\begin{equation} \label{6.53}
\lambda_V^{-k}\PPPP_k^Vf\to\langle f,\mu_V\rangle h_V\quad \mbox{as $k\to\infty$},
\end{equation}
where the convergence holds in $C(X_R)\cap L^1(X,\mu_V)$. 
We claim that it holds also in $C(B^\ell(R))$. Indeed, in view of~\eqref{3.9}, it suffices to check condition~\eqref{3.8} with $B=B^\ell(R)$ and $m=\ell+1$. The latter is a consequence of the boundedness of~$V$, inequality~\eqref{6.20} with $j=0$, and the following lemma, which is established at the end of this subsection.\,\footnote{Recall that the set $\LLambda(\delta,M)$ is defined in Section~\ref{s1.5}.}

\begin{lemma} \label{l6.6}
Under the hypotheses of Theorem~\ref{t1.2}, for any integer $n\ge1$ there is $s=s_n\in(0,1]$ such that
\begin{equation} \label{6.55}
\sup_{k\ge\ell+1} \E_\lambda\,\|\uuu_k\|_s^n\le C(n,\delta,M)\quad\mbox{for $\lambda\in\LLambda(\delta,M)$},
\end{equation}
where $\delta$ and $M$ are arbitrary positive numbers. 
\end{lemma}

We have thus established~\eqref{5.24} for $f\in\VV$. To prove~\eqref{5.24} for any function $f\in C_b(X)$, we apply a simple approximation argument. Namely, the convergence in~$L^1(X,\mu_V)$ is a straightforward consequence of convergence in~$C(B^\ell(R))$ and inequalities~\eqref{6.23} and~\eqref{3.2}. To prove the convergence in the $L^\infty$ norm, we fix a number~$R>0$ and a function $f\in C_b(X)$ and choose a sequence of $C^1$-functions $\tilde f_n:H_n^\ell\to\R$ such that 
$$
\sup_{\vvv\in H_n^\ell,\|\vvv\|\le n}|\tilde f_n(\vvv)-f(\vvv)|\le\frac1n\quad
\mbox{for any $n\ge1$}. 
$$
Then the functions $f_n={\tilde f}_n\circ{\mathsf P}_n$ belong to the space~$\VV$, satisfy the inequality $\|f_n\|_\infty\le \|f\|_\infty$ for any $n\ge1$, and $f_n\to f$ as $n\to\infty$, uniformly on compact subsets of~$X$. Setting 
$$
\Delta_k(g)=\sup_{\uuu\in B^\ell(R)}\bigl|\lambda_V^{-k}\PPPP_k^Vg(\uuu)-\langle g,\mu_V\rangle h_V(\uuu)\bigr|,
\quad \|g\|_{L^\infty_R}=\sup_{\uuu\in B^\ell(R)}|g(\uuu)|,
$$
for any integers $k,n\ge1$, we write
\begin{equation} \label{6.56}
\Delta_k(f)\le \Delta_k(f_n)+\|h_V\|_{L_R^\infty}\,|\langle f-f_n,\mu_V\rangle|+\lambda_V^{-k}\|\PPPP_k^V(f-f_n)\|_{L_R^\infty}. 
\end{equation}
Since $f_n\in\VV$, for any fixed $n\ge1$ the first term on the right-hand side of this inequality goes to zero as $k\to\infty$. The Lebesgue theorem on dominated convergence implies that $|\langle f-f_n,\mu_V\rangle|\to0$ as $n\to\infty$. Thus, the required convergence will be established if we prove that
\begin{equation} \label{6.57}
\sup_{k\ge 1}\lambda_V^{-k}\|\PPPP_k^V(f-f_n)\|_{L_R^\infty}\to0\quad\mbox{as $n\to\infty$}.
\end{equation}
 
To prove~\eqref{6.57}, for any $\rho>0$ we write
\begin{equation} \label{6.58}
\|\PPPP_k^V(f-f_n)\|_{L_R^\infty}\le J_1(k,n,\rho)+J_2(k,n,\rho),
\end{equation}
where 
$$
J_1(k,n,\rho)=\|\PPPP_k^V\bigl((f-f_n)I_{X_\rho}\bigr)\bigr\|_{L_R^\infty}, \quad 
J_2(k,n,\rho)=\|\PPPP_k^V\bigl((f-f_n)I_{X_\rho^c}\bigr)\|_{L_R^\infty}. 
$$
Since $f_n\to f$ uniformly on $X_\rho$, we have 
$$
J_1(k,n,\rho)\le \e(n,\rho)\,\|\PPPP_k^V\mathbf1\|_{L^\infty_R},
$$
where $\e(n,\rho)\to0$ as $n\to\infty$. Convergence~\eqref{5.24} with $f=\mathbf1$ implies that the $L_R^\infty$-norm of the sequence $\lambda_V^{-k}\PPPP_k^V{\mathbf1}$ is bounded by a constant~$C_R$, whence it follows that
\begin{equation} \label{6.59}
\sup_{k\ge 1}\lambda_V^{-k}J_1(k,n,\rho)\le C_R\,\e(n,\rho)\to0\quad\mbox{as $n\to\infty$}. 
\end{equation}
To estimate $J_2$, we use the following result, proved at the end of this subsection. 

\begin{lemma} \label{l6.7} 
Assume that  the hypotheses of Theorem~\ref{t1.2} are satisfied.  Then, for any $s\in(0,1]$, we have
$$
\sup_{k\ge0}
\frac{\|\PPPP_k^V F_s \|_{L_\wwww^\infty}}{\|\PPPP_k^V{\mathbf1}\|_{R_0}}<\infty,
$$ where $F_s(u)=\log(1+\|u\|_s)$.
\end{lemma}

Using this lemma, for any $k,\rho,n \ge1$ we get
\begin{align*}
\la_V^{-k}J_2(k,n,\rho)
&\le 2\|f\|_\infty \bigl(\log(1+\rho)\bigr)^{-1} \la_V^{-k} \|\PPPP_k^V F_s \|_{L_R^\infty}\\
&\le C_R\|f\|_\infty \bigl(\log(1+\rho)\bigr)^{-1} \la_V^{-k}\|\PPPP_k^V{\mathbf1}\|_{R_0}.  
\end{align*}
Since $\la_V^{-k}\|\PPPP_k^V{\mathbf1}\|_{R_0}$ is bounded,   the right-hand side of this inequality goes to zero as $\rho\to+\infty$, uniformly with respect to $k\ge1$. Combining this with~\eqref{6.59}, we see that supremum over $k\ge 1$ of the right-hand side of~\eqref{6.58} can be made arbitrarily small by choosing first $\rho>0$ and then~$n\ge1$ sufficiently large. This proves the required convergence~\eqref{6.57}. 

\smallskip
{\it Step~5: Proof of~\eqref{5.24} for $f\in C_\wwww(X)$\/}. 
We apply an approximation argument similar to the one used in the previous step. Let us fix a function $f\in C_{\wwww}(X)$ and define a sequence~$\{f_n\}$ by the relation $f_n=f^+\wedge n-f^-\wedge n$. Then $f_n\in C_b(X)$ and $|f_n|\le |f|$ for any $n\ge1$ and $f_n\to f$ in $L_{\wwww_p}^\infty(X)$ for any $p>m$. 
Now note that inequality~\eqref{6.56} remains valid. Furthermore, in view of~\eqref{5.24} and the Lebesgue theorem on dominated convergence, we have
\begin{gather*}
\Delta_k(f_n)\to0\quad\mbox{as $k\to\infty$ for any fixed $n\ge1$},\\
|\langle f-f_n,\mu_V\rangle|\to0\quad\mbox{as $n\to\infty$}. 
\end{gather*}
To prove that~\eqref{6.57} holds, we set  $\wwww'=\wwww_{m+1}$ and write
\begin{equation} \label{6.61}
\|\PPPP_k^V(f-f_n)\|_{L_R^\infty}\le \delta_n\,\|\PPPP_k^V\wwww'\|_{L_R^\infty}
\end{equation}
where $\delta_n=\|f-f_n\|_{L_{\wwww'}^\infty}\to0$ as $n\to\infty$. It follows from~\eqref{3.2} with~$\wwww$ replaced by~$\wwww'$ that
$$
\|\PPPP_k^V\wwww'\|_{L_R^\infty}
\le C_4(R)\|\PPPP_k^V{\mathbf1}\|_{R_0}.
$$
Substituting this inequality into~\eqref{6.61} and recalling that~\eqref{5.24} holds with $f=\mathbf1$ (and, hence, the sequence $\lambda_V^{-k}\|\PPPP_k^V{\mathbf1}\|_{R_0}$ is bounded), we obtain~\eqref{6.57}. This completes the proof of Theorem~\ref{t6.5}. 
\end{proof}

The following corollary of Theorem~\ref{t6.5} will be important for proving that the LDP holds uniformly with respect to a class of initial measures. 

\begin{corollary} \label{c6.6}
Under the hypotheses of Theorem~\ref{t6.5}, for any positive numbers~$\delta$ and $M$ and any functions $V\in\VV$ and $f\in C_\wwww^\infty$, the convergence 
\begin{equation} \label{6.62}
\lambda_V^{-k}\,\E_\nu\bigl\{\exp\bigl(V(\uuu_1)+\cdots+V(\uuu_k)\bigr)f(\uuu_k)\bigr\}\to\langle f,\mu_V\rangle\,\langle h_V,\nu\rangle, \quad k\to\infty, 
\end{equation}
holds uniformly in $\nu\in\LLambda(\delta,M)$. 
\end{corollary}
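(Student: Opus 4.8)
The plan is to derive \eqref{6.62} from the convergence statements of Theorem~\ref{t6.5} by writing the expectation as a pairing and controlling the contribution of large values of~$\uuu$ uniformly over $\nu\in\LLambda(\delta,M)$. By the definitions of the generalised semigroup $\PPPP_k^V$ in~\eqref{6.3} and of the pairing $\lag\cdot,\cdot\rag$ one has $\E_\nu\{\exp(V(\uuu_1)+\cdots+V(\uuu_k))f(\uuu_k)\}=\lag\PPPP_k^Vf,\nu\rag$, so \eqref{6.62} is precisely the assertion that $\lambda_V^{-k}\lag\PPPP_k^Vf,\nu\rag\to\lag f,\mu_V\rag\,\lag h_V,\nu\rag$ uniformly over $\nu\in\LLambda(\delta,M)$. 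Fix $V\in\VV$, and let $m=m_0(V)$ and $\wwww=\wwww_m$ be as in Theorem~\ref{t6.5}; the function $f$ of the statement then lies in $C_\wwww(H^\ell)$, the class to which~\eqref{5.24} applies. I will use two consequences of Theorem~\ref{t6.5}: (i) for every $R\ge1$ the quantity $\Delta_k(R):=\sup_{\uuu\in B^\ell(R)}|\lambda_V^{-k}\PPPP_k^Vf(\uuu)-\lag f,\mu_V\rag h_V(\uuu)|$ tends to~$0$ as $k\to\infty$, which is the $C_b(B^\ell(R))$ part of~\eqref{5.24}; and (ii) there is a constant $C_1$, independent of $k$ and $\uuu$, with $\lambda_V^{-k}|\PPPP_k^Vf(\uuu)|\le C_1\wwww(\uuu)$ and $|h_V(\uuu)|\le C_1\wwww(\uuu)$. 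The first bound in (ii) follows from the positivity of the kernel together with the growth condition~\eqref{3.2} for $\PPPP_k^V$ (Proposition~\ref{p6.3}) and the boundedness of $\lambda_V^{-k}\|\PPPP_k^V{\mathbf1}\|_{R_0}$, which is a consequence of~\eqref{5.24} applied to $f={\mathbf1}$; the second holds because $h_V\in C_\wwww(H^\ell)$.

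The key point is the uniform integrability of $\wwww=\wwww_m$ against the family $\LLambda(\delta,M)$, namely
\[
\sup_{\nu\in\LLambda(\delta,M)}\int_{(B^\ell(R))^c}\wwww\,\dd\nu\longrightarrow0\qquad\text{as }R\to\infty .
\]
To prove this, set $T(\uuu)=\varPhi(u^1)+\cdots+\varPhi(u^\ell)$; then $\wwww_m(\uuu)=\varPhi(u^1)^m+\cdots+\varPhi(u^\ell)^m\le T(\uuu)^m$, and by~\eqref{1.03} one has $\varPhi\ge1+\|\cdot\|^\alpha$, so $\uuu\notin B^\ell(R)$ forces $T(\uuu)>R^\alpha$. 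Since $t\mapsto t^m e^{-\delta t}$ is decreasing for $t>m/\delta$, for every $\nu\in\LLambda(\delta,M)$ and every $R$ with $R^\alpha>m/\delta$,
\[
\int_{(B^\ell(R))^c}\wwww\,\dd\nu\le\Bigl(\sup_{t>R^\alpha}t^m e^{-\delta t}\Bigr)\int_{H^\ell}e^{\delta T(\uuu)}\,\nu(\dd\uuu)\le M\,R^{\alpha m}e^{-\delta R^\alpha},
\]
which tends to $0$ as $R\to\infty$, uniformly in $\nu$.

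Given these ingredients, the conclusion follows by a two-region splitting. Fix $\e>0$ and, using the uniform tail bound just proved, choose $R\ge1$ so large that $C_1(1+|\lag f,\mu_V\rag|)\sup_{\nu\in\LLambda(\delta,M)}\int_{(B^\ell(R))^c}\wwww\,\dd\nu<\e$. Writing $g_k=\lambda_V^{-k}\PPPP_k^Vf-\lag f,\mu_V\rag h_V$ and using $\nu(H^\ell)=1$ together with (ii),
\[
|\lag g_k,\nu\rag|\le\int_{B^\ell(R)}|g_k|\,\dd\nu+\int_{(B^\ell(R))^c}|g_k|\,\dd\nu\le\Delta_k(R)+C_1(1+|\lag f,\mu_V\rag|)\int_{(B^\ell(R))^c}\wwww\,\dd\nu<\Delta_k(R)+\e .
\]
Hence $\limsup_{k\to\infty}\sup_{\nu\in\LLambda(\delta,M)}|\lag g_k,\nu\rag|\le\e$, and letting $\e\downarrow0$ yields~\eqref{6.62}. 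The only genuinely new step is the uniform tail estimate of the second paragraph; the remaining steps are a routine combination of Theorem~\ref{t6.5} with the dominated convergence theorem, and I do not anticipate a serious obstacle there.
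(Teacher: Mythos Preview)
Your proof is correct and follows essentially the same route as the paper's own argument: both rewrite the expectation as $\lag\PPPP_k^Vf,\nu\rag$, split the integral over $B^\ell(R)$ and its complement, invoke the uniform convergence on balls from~\eqref{5.24}, bound $\lambda_V^{-k}\PPPP_k^Vf$ and $h_V$ by $C\wwww$ via the growth condition~\eqref{3.2} together with~\eqref{5.24} at $f=\mathbf1$, and conclude from the uniform tail bound $\sup_{\nu\in\LLambda(\delta,M)}\int_{(B^\ell(R))^c}\wwww\,\dd\nu\to0$. The only difference is that you spell out the tail estimate explicitly (using $\wwww_m\le T^m$ and the decay of $t^me^{-\delta t}$), whereas the paper simply asserts it; this is a welcome clarification rather than a departure in strategy.
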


\begin{proof}
By Theorem~\ref{t6.5}, convergence~\eqref{6.62} holds for $\nu=\delta_\uuu$, uniformly with respect to $\uuu\in B^\ell(R)$ for any $R>0$. It is easily seen that the required result with be established if we prove that, uniformly in $\lambda\in\LLambda(\delta,M)$, 
\begin{equation} \label{6.63}
\sup_{k\ge1}\biggl\{\int_XI_{B^\ell(R)^c}\bigl|\lambda_V^{-k}\PPPP_k^Vf-\langle f,\mu_V\rangle h_V\bigr|\,\dd\nu\biggr\}\to0
\quad\mbox{as $R\to\infty$}.
\end{equation}
To see this, let us note that, by~\eqref{3.2} and~\eqref{5.24}, we have
$$
\|\PPPP_k^Vf\|_{L_\wwww^\infty}\le C_1\|\PPPP_k^V{\mathbf1}\|_{R_0}\le C_2\lambda_V^k\quad\mbox{for all $k\ge1$}.
$$
It follows that 
$$
\bigl|\lambda_V^{-k}\PPPP_k^Vf(\uuu)\bigr|\le C_2\wwww(\uuu), \quad\uuu\in X, \quad k\ge0. 
$$
Since $h_V\in C_\wwww(X)$ and 
$$
\sup_{\nu\in\LLambda(\delta,M)}\int_XI_{B^\ell(R)^c}(\uuu)\,\wwww(\uuu)\,\nu(\dd\uuu)\to0\quad\mbox{as $R\to\infty$}, 
$$
we obtain the required convergence~\eqref{6.63}.
\end{proof}

\begin{proof}[Proof of Lemma~\ref{l6.6}]
In view of~\eqref{1.017}, it suffices to prove that
\begin{equation} \label{6.71}
\sup_{k\ge2}\E_\lambda \|u_k\|_s^n\le C(\delta,M)\quad\mbox{for any $\lambda\in\Lambda(\delta,M)$}. 
\end{equation}
It follows from inequality~\eqref{6.31} with $(v,\eta_1)$ replaced by $(u_{k-1},\eta_k)$ that 
\begin{equation} \label{5.064}
\|u_{k}\|_s^n\le C_1\bigl(\varPhi_m(u_{k-1})+e^{2sn\pppp(u_{k-1})}+\|\eta_k\|_U^{2sn}+\|\eta_{k}\|^{2n}+1\bigr). 
\end{equation}
Taking the mean value with respect to~$\E_\lambda$ and using the Markov property and inequalities~\eqref{1.8}, \eqref{1.08}, \eqref{1.7}, and~\eqref{1.09}, for sufficiently small $s>0$ we derive
\begin{equation} \label{6.60}
\E_\lambda \|u_{k}\|_s^n\le C_2\bigl(\E_\lambda\varPhi_m(u_{k-1})+e^c\,\E_\lambda e^{\varkappa\varPhi(u_{k-2})}+1\bigr). 
\end{equation}
Recalling~\eqref{3.03}, we arrive at the required inequality~\eqref{6.71}.
\end{proof}

\begin{proof}[Proof of Lemma~\ref{l6.7}]
It follows from inequality~\eqref{5.064} with $n=1$ that
$$
F_s(u_k)\le \varPhi(u_{k-1})+2s\pppp(u_{k-1})+\|\eta_k\|_U+C_1. 
$$
Multiplying both sides of this inequality by $\exp(V(\uuu_1)+\cdots+V(\uuu_k))$, applying the mean value~$\E_\uuu$, and using the Markov property and inequalities~\eqref{1.08} and~\eqref{3.02}, we derive
\begin{equation} \label{5.066}
(\PPPP_k^VF_s)(\uuu)
\le e^{2\|V\|_\infty}\bigl(\PPPP_{k-2}^V\varPhi+2\,\PPPP_{k-2}^V(\PPPP_1\pppp)+C_2\PPPP_{k-2}^V{\mathbf1}\bigr)(\uuu). 
\end{equation}
In view of~\eqref{1.7}, \eqref{1.09}, and the Jensen inequality, we have
$$
\PPPP_1\pppp(u)=\E_u\pppp(u_1)\le\frac1\delta\log \E e^{\delta\pppp(u_1)}\le\frac1\delta(\rho\,\varPhi(u)+c). 
$$
Substituting this into~\eqref{5.066} and using~\eqref{3.2}, we see that the $L_\wwww^\infty$-norm of the right-hand side can be estimated by~$\|\PPPP_k{\mathbf1}\|_{R_0}$. 
\end{proof}

\subsection{Pressure function}
\label{s8.6}
We now prove that, for any $V\in C_b(H^\ell)$, $\delta>0$ and~$M>0$, the limit~\eqref{1.21} exists uniformly with respect to $\lambda\in\LLambda(\delta,M)$. Indeed, when $V\in\VV$, this property follows from Corollary~\ref{c6.6} applied to the function~$f=\mathbf1$. To deal with the general case, we need the concept of buc-convergence. Following~\cite{FK2006}, we say that a sequence $\{V_n\}\subset C_b(X)$ {\it buc-converges to $V\in C_b(X)$\/} if $\sup_n\|V_n\|_\infty<\infty$ and $\|V-V_n\|_{L^\ty(K)}\to0$ as $n\to \ty$ for any compact set $K\subset X$.\footnote{Note that the concept of buc-convergence can be defined on any Polish space.}

We claim that the existence of a limit~\eqref{1.21},  uniformly with respect to $\lambda\in\LLambda(\delta,M)$, follows immediately from the following two properties:
\begin{itemize}
\item[\bf(a)]
In the setting of Section~\ref{s4}, let $\{\zeta_\theta\}$ be an exponentially tight family of random probability measures. Then the set of functions $V\in C_b(X)$ for which limit~\eqref{2.1} exists is buc-closed.
\item[\bf(b)]
Let $X=H^\ell$ and let $\VV\subset C_b(X)$ be the subspace defined above. Then any function $V\in C_b(X)$ can be buc-approximated by a sequence $\{V_n\}\subset\VV$. 
\end{itemize}
Indeed, taking these properties for granted, let us define the set $\Theta=\N\times\LLambda(\delta,M)$  of elements $\theta=(k,\lambda)$ with an order relation~$\prec$ such that $(k_1,\lambda_1)\prec (k_2,\lambda_2)$ if and only if $k_1\le k_2$. Denote by~$\zzeta_\theta$ the random measure $\zzeta_k$ (see~\eqref{1.016}) considered on the probability space $(\Omega,\FF,\IP_\lambda)$. It is straightforward to check that the existence of limit~\eqref{1.21} uniformly with respect to~$\lambda\in\LLambda(\delta,M)$ is equivalent to the existence of the limit
\begin{equation} \label{6.70}
\lim_{(k,\lambda)\in\Theta}\frac{1}{k}\log\int_\Omega \exp\bigl(k\langle V,\zzeta_k\rangle\bigr)\dd\IP_\lambda. 
\end{equation}
As was mentioned above, this limit exists for $V\in\VV$. By property~(a), the set of functions for which~\eqref{6.70} exists is buc-closed in~$C_b(H^\ell)$, and by~(b), any function $V\in C_b(H^\ell)$ can be buc-approximated by a sequence from~$\VV$. Hence, limit~\eqref{1.21} exists uniformly in $\lambda\in\LLambda(\delta,M)$ (with arbitrary positive~$\delta$ and~$M$) for any $V\in C_b(H^\ell)$. Thus, it remains to establish properties~(a) and~(b).

\smallskip
{\it Proof of~(a)}. 
To any function $V\in C_b(X)$ there corresponds a bounded continuous function $\widetilde V:\PP(X)\to\R$ defined by $\widetilde V(\nu)=\langle V,\nu\rangle$. By Proposition~3.17 in~\cite{FK2006}, the set of bounded continuous functions $F:\PP(X)\to\R$ for which the limit 
$$
\lim_{\theta\in\Theta} \frac{1}{r(\theta)}\log\int_{\Omega_\theta} \exp\bigl(r(\theta)F(\zeta_\theta)\bigr)\dd \pP_\theta
$$
exists is buc-closed\footnote{Proposition~3.17 in~\cite{FK2006} deals with the case when $\Theta=\N$. However, the proof presented there remains valid for exponentially tight families of random measures indexed by a directed set.} in $C_b(\PP(X))$. Thus, it suffices to  prove that if a sequence $\{V_n\}\subset C_b(X)$ buc-converges to~$V$, then $\{\widetilde V_n\}$ buc-converges to~$\widetilde V$ in $C_b(\PP(X))$. 

To see this, we first note that~$\{\widetilde V_n\}$ is bounded in $C_b(\PP(X))$. Now fix a compact set $\KK\subset\PP(X)$ and a number~$\e>0$, and use the Prokhorov compactness criterion to find a compact subset $K_\e\subset X$ such that $\nu(K_\e^c)<\e$ for any $\nu\in \KK$. We have
\begin{align*}
|\lag V-V_n,\nu\rag|&\le \int_{K_\e} |V(u)-V_n(u)|\nu(\dd u)+\int_{K_\e^c} |V(u)-V_n(u)|\nu(\dd u)\\
&\le \|V-V_n\|_{L^\ty(K_\e)}+\e\Bigl(\|V\|_\ty+\sup_{n\ge1}\|V_n\|_\ty\Bigr),
\end{align*}
where $\nu\in \KK$. The right-hand side of this inequality can be made arbitrarily small by choosing~$n\gg1$ and~$\e\ll 1$. 

\smallskip
{\it Proof of~(b)}. 
Given $V\in C_b(H^\ell)$, we define $V_n(\uuu)=V({\mathsf P}_n\uuu)$. Then $V_n\in\VV$  and $\|V_n\|_\infty\le\|V\|_\infty$ for any $n\ge1$. Let $K\subset H^\ell$ be a compact set. Since~$\{{\mathsf P}_n\}$ converges to identity in the strong operator topology, and the strong convergence is uniform on compact subsets, we see that $\|V_n(\uuu)-V(\uuu)\|_{L^\infty(K)}\to0$ as $n\to\infty$. This completes the proof of Property~1. 

\subsection{Uniqueness of equilibrium state}
\label{s8.7}
In this section, we   show that,  for any~$V\in \VV$, there is a unique equilibrium state~$\sigma_V\in \PP(H^\ell)$ for~$Q_\ell(V)$. Recall that the pressure function $Q_\ell:C_b(H^\ell)\to\R$ is $1$-Lipschitz continuous and convex and that $I_\ell:\MM(H^\ell)\to \R$ stands for its Legendre transform. It follows from~\eqref{5.24} and the positivity of~$h_V$ that
\begin{equation} \label{6.66}
Q_\ell(V)=\log\lambda_V=\lim_{k\to\infty}\frac1k\log(\PPPP_k^Vf)(\uuu),
\end{equation}
where $f\in C_w(H^\ell)\cap C_+(H^\ell)$ and $\uuu\in H^\ell$ are arbitrary. 

\smallskip
We define a semigroup~$\SSS_k^V$ by~\eqref{1.27} and denote by~$\SSS^{V*}_k:\PP(H^\ell)\to\PP(H^\ell)$ its  dual semigroup. As in the case of~$\PPPP_k$, we can consider the corresponding generalised Markov semigroup (cf.~\eqref{6.3}):
$$
(\SSSS_1^F f)(\uuu)=\SSS_1^V(e^Ff), \quad \SSSS_k^F=(\SSSS_1^F)^k,
$$
where $F\in C_b(H^\ell)$ is a fixed function. It is straightforward to check that $\SSSS_1^Ff=\lambda_V^{-1}h_V^{-1}\PPPP_1^{V+F}(h_Vf)$, whence it follows that
\begin{equation} \label{6.67}
\SSSS_k^Ff=\lambda_V^{-k}h_V^{-1}\PPPP_k^{V+F}(h_V f), \quad k\ge0. 
\end{equation}
Combining this relation with Theorem~\ref{t6.5}, we see that the pressure function~$Q_\ell^V$ is well defined for~$\SSS_k^V$:
$$
Q_\ell^V(F)=\lim_{k\to\infty}\frac1k\log(\SSSS_k^F{\mathbf1})(\uuu), \quad F\in C_b(H^\ell), \quad \uuu\in H^\ell.
$$ Denote by~$I ^V_\ell: \MM(H^\ell)\to\R $   its Legendre transform. We shall  use the following   well-known   characterisation of   a stationary measure; see \   Lemma~2.5 in~\cite{DV-1975}. 
\begin{lemma}\label{L:5.8}
We have $I_\ell^V(\sigma)=0$ for some~$\sigma\in\PP(H^\ell)$  if and only if $\sigma$ is a  stationary measure for $\SSS_1^{V*}$.
\end{lemma}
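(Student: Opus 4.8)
The plan is to establish Lemma~\ref{L:5.8} by the classical variational characterisation of stationary measures for a Feller semigroup, following the argument in Lemma~2.5 of~\cite{DV-1975}, adapted to the present generalised Markov setting. The semigroup $\SSS_k^V$ defined in~\eqref{1.27} is a genuine (sub-)Markov semigroup: taking $F=0$ in~\eqref{6.67} and using that $\PPPP_1^V h_V=\lambda_V h_V$ gives $\SSS_1^V\mathbf1=\lambda_V^{-1}h_V^{-1}\PPPP_1^V h_V=\mathbf1$, so $\SSS_1^V$ preserves constants, and since $h_V>0$ on $X$ and the kernel $P_1^V$ is non-negative, $\SSS_1^V$ maps non-negative functions to non-negative functions. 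Hence $\SSS_1^{V*}$ maps $\PP(H^\ell)$ into itself, and $Q_\ell^V(C)=C$ for every constant $C\in\R$, so $I_\ell^V\ge0$ with $I_\ell^V(\sigma)=0$ iff $\langle F,\sigma\rangle\le Q_\ell^V(F)$ for all $F\in C_b(H^\ell)$.

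The forward implication is the easy half. Suppose $\sigma$ is stationary for $\SSS_1^{V*}$, i.e. $\langle\SSS_1^V F,\sigma\rangle=\langle F,\sigma\rangle$ for all $F\in C_b(H^\ell)$. Then by Jensen's inequality applied to the probability kernel of $\SSS_1^V$ and the convexity of the exponential, for any $F\in C_b(H^\ell)$ and any $k\ge1$,
\begin{equation*}
\langle\SSSS_k^F\mathbf1,\sigma\rangle=\Bigl\langle\SSS_k^V\exp\Bigl(\sum_{j=1}^k F\circ(\text{shift})\Bigr)\mathbf1,\sigma\Bigr\rangle\ge\exp\bigl(k\langle F,\sigma\rangle\bigr),
\end{equation*}
where one uses stationarity repeatedly to push the expectation of each factor onto $\sigma$; taking $\frac1k\log$ and letting $k\to\infty$ gives $Q_\ell^V(F)\ge\langle F,\sigma\rangle$. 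Since this holds for all $F$ (including $-F$ is not needed — we only need the one-sided bound), we get $I_\ell^V(\sigma)=\sup_F(\langle F,\sigma\rangle-Q_\ell^V(F))\le0$, hence $=0$. One has to be a little careful that the pressure limit $Q_\ell^V(F)$ is indeed given pointwise-independently by Theorem~\ref{t6.5}, which is guaranteed because $\SSSS_k^F$ has the same structure~\eqref{6.67} as $\PPPP_k^{V+F}$ conjugated by $h_V$, and $V+F\in C_b(H^\ell)$ so Proposition~\ref{p6.3}, Proposition~\ref{p6.4} and Theorem~\ref{t6.5} apply.

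The converse is the substantive direction and the main obstacle. Assume $I_\ell^V(\sigma)=0$, i.e. $\langle F,\sigma\rangle\le Q_\ell^V(F)$ for all $F\in C_b(H^\ell)$. The strategy is to differentiate this inequality at $F=0$ along the direction $F=t(G-\SSS_1^V G)$ for fixed $G\in C_b(H^\ell)$ and small $t$. The key analytic input is that $Q_\ell^V$ is directionally differentiable at $0$ and satisfies $Q_\ell^V(t(G-\SSS_1^V G))=o(t)$ as $t\to0$; this follows from the representation $\SSSS_k^F\mathbf1=\E_\uuu\exp(\sum_{j=1}^k F(\uuu_j))$ for the $\SSS^V$-process, writing $\sum_{j=1}^k(G(\uuu_j)-\SSS_1^V G(\uuu_j))=\sum_{j=1}^k(G(\uuu_j)-\E[G(\uuu_{j+1})\mid\FF_j])$, which is a martingale increment sum, bounded by $2k\|G\|_\infty$, so that $\frac1k\log\E\exp(t\sum_j\cdots)$ is $O(t^2)$ uniformly in $k$ by Azuma–Hoeffding-type estimates, giving $Q_\ell^V(t(G-\SSS_1^V G))=O(t^2)$. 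Plugging $F=t(G-\SSS_1^V G)$ and $F=-t(G-\SSS_1^V G)$ into $\langle F,\sigma\rangle\le Q_\ell^V(F)$ and dividing by $t>0$, then letting $t\to0^+$ in both, yields $\langle G-\SSS_1^V G,\sigma\rangle=0$, i.e. $\langle\SSS_1^V G,\sigma\rangle=\langle G,\sigma\rangle$ for all $G\in C_b(H^\ell)$. Hence $\SSS_1^{V*}\sigma=\sigma$, completing the proof. The point requiring care is the uniform-in-$k$ quadratic bound on the log-moment generating function of the bounded martingale sum, and verifying that the additive functional $\sum_{j=1}^k F(\uuu_j)$ appearing in $\SSSS_k^F$ matches the shifted sum $\sum_{j=1}^k(G-\SSS_1^V G)(\uuu_j)$ up to a telescoping boundary term that is $O(\|G\|_\infty)$ and therefore contributes nothing to $\frac1k\log(\cdot)$ in the limit.
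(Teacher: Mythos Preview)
The paper does not give its own proof of this lemma; it simply cites Lemma~2.5 of Donsker--Varadhan~\cite{DV-1975} as a ``well-known characterisation''. Your proposal supplies essentially the standard Donsker--Varadhan argument, and the substantive direction --- $I_\ell^V(\sigma)=0\Rightarrow\sigma$ stationary, via the martingale decomposition of $\sum_j(G-\SSS_1^VG)(\uuu_j)$ and the Azuma--Hoeffding bound $Q_\ell^V\bigl(t(G-\SSS_1^VG)\bigr)=O(t^2)$ --- is correct.

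There is, however, a small gap in the forward direction. Your Jensen bound $\langle\SSSS_k^F\mathbf1,\sigma\rangle\ge e^{k\langle F,\sigma\rangle}$ is fine, but to conclude $Q_\ell^V(F)\ge\langle F,\sigma\rangle$ you need $\lim_k\tfrac1k\log\langle\SSSS_k^F\mathbf1,\sigma\rangle=Q_\ell^V(F)$, and the paper establishes this limit only for initial laws in $\LLambda(\delta,M)$ (Section~\ref{s8.6}); an arbitrary stationary $\sigma$ is not a priori in such a class. Your appeal to Theorem~\ref{t6.5} for ``$V+F\in C_b(H^\ell)$'' is also off: that theorem requires the potential to lie in~$\VV$. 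The repair is easy: apply Jensen \emph{inside} the kernel to get $\log\SSSS_{k+1}^F\mathbf1\ge\SSS_1^V(F+\log\SSSS_k^F\mathbf1)$, integrate against $\sigma$, use invariance, and iterate to obtain $\tfrac1k\langle\log\SSSS_k^F\mathbf1,\sigma\rangle\ge\langle F,\sigma\rangle$. Since $\bigl|\tfrac1k\log\SSSS_k^F\mathbf1\bigr|\le\|F\|_\infty$ uniformly in $\uuu$ and $k$, and the pointwise limit equals $Q_\ell^V(F)$ for every~$\uuu$ (this does follow from Section~\ref{s8.6}, because each $\delta_\uuu\in\LLambda(\delta,M_\uuu)$), bounded convergence gives $Q_\ell^V(F)\ge\langle F,\sigma\rangle$. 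Alternatively, for the paper's actual application the gap is harmless: by Lemma~\ref{L:5.9} the only stationary measure is $\nu_V=h_V\mu_V$, which by~\eqref{6.23} lies in $\LLambda(\delta,M)$, so your original argument applies to it.
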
   

Convergence~\eqref{5.24} implies the uniqueness of a stationary measure for $\SSS_1^{V*}$. More precisely, we have the following result.

\begin{lemma}\label{L:5.9}
The semigroup $\SSS^{V*}_k$ has a unique stationary measure, which is given by $\nu_V=h_V\mu_V$. 
\end{lemma}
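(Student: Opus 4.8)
\emph{Plan.} Recall from \eqref{1.27} that $\SSS_k^V g=\lambda_V^{-k}h_V^{-1}\PPPP_k^V(gh_V)$ for $g\in C_b(H^\ell)$, and that by Theorem~\ref{t6.5} (applied with $m\ge m_0(V)$) we have $h_V\in C_{\wwww_m}(H^\ell)\cap C_+(H^\ell)$, $\PPPP_1^Vh_V=\lambda_V h_V$, $\PPPP_1^{V*}\mu_V=\lambda_V\mu_V$ and $\langle h_V,\mu_V\rangle=1$. First I would record two elementary facts. On one hand, $\PPPP_k^V$ is a positive operator and $\PPPP_k^Vh_V=\lambda_V^k h_V$, so for $g\in C_b(H^\ell)$ one has $|\PPPP_k^V(gh_V)|\le\|g\|_\infty\PPPP_k^V h_V=\|g\|_\infty\lambda_V^k h_V$; since $h_V>0$ everywhere on $H^\ell$ (Step~3 of the proof of Theorem~\ref{t6.5}), dividing by $\lambda_V^k h_V$ yields
\[
|\SSS_k^V g|\le\|g\|_\infty\quad\text{on }H^\ell,\ \text{for all }k\ge1,
\]
so that $\SSS_k^V$ is an honest Markov semigroup. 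On the other hand, $\nu_V:=h_V\mu_V$ is a probability measure on $H^\ell$ (because $h_V\ge0$ and $\langle h_V,\mu_V\rangle=1$), concentrated on $U^\ell$ since $\mu_V$ is, and $\langle\wwww_m,\nu_V\rangle<\infty$ because $h_V\le C\wwww_m$ and $\mu_V\in\PP_{\wwww_m}(H^\ell)$ by~\eqref{6.23}.

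\emph{Stationarity of $\nu_V$.} Next I would fix $g\in C_b(H^\ell)$ and compute, using the definition of $\SSS_1^V$ and then the eigenrelation $\PPPP_1^{V*}\mu_V=\lambda_V\mu_V$,
\[
\langle g,\SSS_1^{V*}\nu_V\rangle=\langle\SSS_1^Vg,\nu_V\rangle=\lambda_V^{-1}\!\int_{H^\ell}\PPPP_1^V(gh_V)\,\dd\mu_V=\lambda_V^{-1}\langle gh_V,\PPPP_1^{V*}\mu_V\rangle=\langle gh_V,\mu_V\rangle=\langle g,\nu_V\rangle.
\]
Here the duality identity $\langle\PPPP_1^Vf,\mu_V\rangle=\langle f,\PPPP_1^{V*}\mu_V\rangle$, which holds for $f\in C_b(H^\ell)$ by~\eqref{6.101}, extends to $f=gh_V\in C_{\wwww_m}(H^\ell)$ by a routine truncation argument: $\PPPP_1^V$ maps $L^\infty_{\wwww_m}$ into itself by the growth condition, and $\langle\wwww_m,\mu_V\rangle<\infty$. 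Hence $\SSS_1^{V*}\nu_V=\nu_V$, and iterating, $\SSS_k^{V*}\nu_V=\nu_V$ for every $k\ge1$.

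\emph{Uniqueness.} Finally, let $\sigma\in\PP(H^\ell)$ be any stationary measure for $\SSS_1^{V*}$, so that $\langle g,\sigma\rangle=\langle\SSS_k^Vg,\sigma\rangle$ for all $g\in C_b(H^\ell)$ and $k\ge1$. Applying the convergence~\eqref{5.24} to the function $f=gh_V\in C_{\wwww_m}(H^\ell)$ gives $\lambda_V^{-k}\PPPP_k^V(gh_V)\to\langle gh_V,\mu_V\rangle h_V$ in $C_b(B^\ell(R))$ for every $R$, hence pointwise on $H^\ell$; dividing by $h_V(\uuu)\in(0,\infty)$ yields $\SSS_k^Vg(\uuu)\to\langle gh_V,\mu_V\rangle=\langle g,\nu_V\rangle$ for every $\uuu\in H^\ell$. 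Combining this pointwise convergence with the uniform bound $|\SSS_k^Vg|\le\|g\|_\infty$ and the dominated convergence theorem (legitimate since $\sigma$ is a probability measure), I obtain $\langle g,\sigma\rangle=\lim_k\langle\SSS_k^Vg,\sigma\rangle=\langle g,\nu_V\rangle$ for all $g\in C_b(H^\ell)$, whence $\sigma=\nu_V$. The only delicate point in the whole argument is the bookkeeping around the unboundedness of $h_V$ — one has to run~\eqref{5.24} and the duality~\eqref{6.101} in the weighted spaces $C_{\wwww_m}$, $L^\infty_{\wwww_m}$ rather than in $C_b$ — but the positivity of $\PPPP_k^V$ together with the eigenrelation $\PPPP_k^Vh_V=\lambda_V^kh_V$ reduces the dominating estimate to the trivial bound $|\SSS_k^Vg|\le\|g\|_\infty$, so no serious obstacle remains.
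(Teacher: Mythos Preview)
Your argument is correct and follows the same route as the paper's proof: verify that $\nu_V=h_V\mu_V$ is stationary via the eigenrelations for $\PPPP_1^V$ and $\PPPP_1^{V*}$, then deduce uniqueness from the convergence~\eqref{5.24} applied to $f=gh_V$. The main addition you make---recording the uniform bound $|\SSS_k^Vg|\le\|g\|_\infty$ and invoking dominated convergence to pass from pointwise (or locally uniform) convergence of $\SSS_k^Vg$ to convergence of $\langle\SSS_k^Vg,\sigma\rangle$---is a detail the paper leaves implicit, so your write-up is slightly more complete but not a different approach.
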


\begin{proof} 
The relation~$\nu_V(H^\ell)=\lag h_V,\mu_V\rag =1$ implies that~$\nu_V \in {\cal P}({H^\ell})$.
For any $g\in C_b(H^\ell)$, we have
$$
\lag \SSS^V_1 g, \nu_V \rag
= \la_V^{-1}\lag    \PPPP_1^V (gh_V), \mu_V \rag
=  \lag     gh_V , \mu_V \rag
=\lag   g , \nu_V \rag,
$$
whence it follows that~$\nu_V$ is a  stationary measure for~$\SSS^{V*}_1$. Furthermore, applying~\eqref{5.24} 
to $gh_V\in C_{\wwww_m}(H^\ell)$, for any $R>0$ we get 
$$
\SSS^V_k g \to\lag g ,\nu_V\rag \quad\mbox{in~$C_b(B_{H^\ell}(R)) $ as~$k\to\infty$}. 
$$
Since~$g$ is arbitrary, we see that $\SSS^{V*}_k \sigma \to   \nu_V$ in $\MM_+(H^\ell)$ as $k\to\ty$
for any $\sigma\in\PP(H^\ell)$, whence we conclude that~$\nu_V$ is the unique stationary measure for~$\SSS^{V*}_k$.
\end{proof}
We are now ready to prove the existence and the uniqueness of equilibrium state. 
Relations~\eqref{6.66} and~\eqref{6.67}  imply that
\begin{equation*} 
Q_\ell^V(F)=Q_\ell(V+F)-Q_\ell(V). 
\end{equation*}
It follows that   
$$
I_\ell^V(\sigma)=\sup_{F\in C_b(H^\ell)}\bigl(\lag F, \sigma\rag-Q_\ell^V(F)\bigr)=I_\ell(\sigma)+Q_\ell(V)-\langle V,\sigma\rangle
$$ 
for any $\sigma\in \PP(H^\ell)$. Using   Lemmas~\ref{L:5.8} and~\ref{L:5.9}, we conclude that the relation
$$
I_\ell(\sigma)=\langle V,\sigma\rangle-Q_\ell(V) 
$$
holds if and only if $\sigma=\nu_V$. Thus, $\nu_V$ is the unique equilibrium state.

\subsection{Completion of the proof}
\label{s8.8}
Let us fix positive numbers~$\delta$ and~$M$. As was explained in Section~\ref{s1.5}, the LDP for the occupation measures~\eqref{1.18} (which is uniform with respect to the initial measure $\lambda\in\Lambda(\delta,M)$) will be established if we prove the uniform LDP for the measures~$\zzeta_k$ given by~\eqref{1.016}. Recall that the directed set~$\Theta$ of pairs $\theta=(k,\lambda)$, $k\in\mathbb N$, $\lambda\in\LLambda(\delta,M)$ was defined in Section~\ref{s8.6} and that convergence in $\theta\in\Theta$ is equivalent to the convergence as $k\to\infty$, which is uniform in $\lambda\in\LLambda(\delta,M)$. By Theorem~\ref{T:2.3}, it suffices to prove the following three properties:
\begin{itemize}
\item[\bf(a)]
The family~$\{\zeta_k\}$ satisfies the hypotheses of Lemma~\ref{L:2.2}.
\item[\bf(b)]
Limit~\eqref{1.21} exists uniformly in $\lambda\in\LLambda(\delta,M)$.
\item[\bf(c)] 
The equilibrium state is unique for any function~$V$ that belongs to a vector space $\VV\subset C_b(H^\ell)$ whose restriction to any compact set $K\subset H^\ell$ is dense in~$C(K)$. 
\end{itemize}
The validity of~(a) was proved in Section~\ref{s8.1}, and the existence of a uniform limit~\eqref{1.21} was established in Section~\ref{s8.6}. As was shown in Section~\ref{s8.7}, the equilibrium state is unique for the functions~$V$ of the form~\eqref{6.51}. Since these functions form a vector space which is determining for~$\PP(H^\ell)$, their restrictions to any compact subset $K\subset H^\ell$ must be dense in~$C(K)$. Thus, properties (a)--(c) are established, and to complete the proof of Theorem~\ref{t1.2} we need to establish the uniform Feller property used in Section~\ref{s8.5}. This is done in the next section.

\section{Uniform Feller property}
\label{s7}
\subsection{Coupling}
\label{s6.1}
In this section, we recall a coupling construction for trajectories of system~\eqref{1.1} (see Section~3.2.2 in~\cite{KS-book}). We shall always assume that the hypotheses of Theorem~\ref{t1.2} are satisfied. 

\smallskip
As was mentioned after the formulation of Condition~(B), the sequence of finite-dimensional subspaces~$\{H_N\}$ can be chosen arbitrarily. From now on, we assume that~$H_N$ is the vector span of~$e_1,\dots,e_N$, where~$\{e_j\}$ is the orthonormal basis entering the decomposition of the random variables~$\eta_k$; see~\eqref{1.5}. In this case, the law of~${\mathsf P}_N \eta_1$ has a density~$D$ against the Lebesgue measure:
$$
D(v)=\prod_{i=1}^N b^{-1}_i p_i(b^{-1}_ix_i), 
\quad v=(x_1,\ldots,x_N)\in H_N.
$$
For any $u\in H$, let us denote by~$\nu_u$ the law of~${\mathsf P}_N (S(u)+\eta_1)$. The following result is Lemma 3.2.6 in  \cite{KS-book}.

\begin{lemma}\label{L:6.1}
For any integer $N\ge1$, there is a probability space $(\Omega, \FF,\pP)$ and two families of $H$-valued random variables $\zeta=\zeta(v,v',\omega)$ and $\zeta'=\zeta'(v,v',\omega)$ with $v,v'\in H$ such that the following properties hold. 
\begin{itemize}
\item[\bf(i)] The laws of   $\zeta$ and $\zeta'$ coincide with that of $\eta_1$.
\item[\bf(ii)] The random variables $({\mathsf P}_N\zeta,{\mathsf P}_N\zeta')$ and $({\mathsf Q}_N\zeta, {\mathsf Q}_N\zeta')$ are independent. Furthermore, the random variables ${\mathsf Q}_N\zeta$ and ${\mathsf Q}_N\zeta'$ are equal for all~$\omega\in\Omega$ and do not depend on $(v,v').$ 
\item[\bf(iii)] The pair
$$
V= {\mathsf P}_N (S(v)+\zeta), \quad V'= {\mathsf P}_N (S(v')+\zeta')
$$is a maximal coupling for $(\nu_v,\nu_{v'})$ and
\begin{equation}\label{E:6.1}
\pP\{V\neq V'\}\le C_N \|S(v)-S(v')\| \quad\text{for any $v,v'\in H$}.
\end{equation}

\item[\bf(iv)]  
The random variables $\zeta$ and $\zeta'$ are measurable functions of $(v,v',\omega)\in H\times H\times \Omega.$
\end{itemize}
\end{lemma}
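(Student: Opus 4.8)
The statement asserts the existence of a coupling of two copies of the noise $\eta_1$ that acts as a maximal coupling on the low-frequency part (after pushing through $S$) and is \emph{identical} on the high-frequency part. Since $\eta_1 = \sum_j b_j\xi_{j1}e_j$ and the projection ${\mathsf P}_N$ onto $H_N=\lspan\{e_1,\dots,e_N\}$ splits this sum, the natural construction is to couple the coordinates $\xi_{11},\dots,\xi_{N1}$ (which carry the full $H_N$-information of $\eta_1$) by a maximal coupling, while setting the remaining coordinates to be literally equal. The plan is therefore: (1) recall the classical maximal coupling lemma for a pair of probability measures on a Polish space with the quantitative bound on the non-coupling probability in terms of total variation distance; (2) apply it on $H_N\cong\R^N$ to the pair $(\nu_v,\nu_{v'})$, where $\nu_u=\DD({\mathsf P}_N(S(u)+\eta_1))$, together with a measurable-selection argument to get joint measurability in the parameters $(v,v')$; (3) on the complementary space, take ${\mathsf Q}_N\zeta={\mathsf Q}_N\zeta'$ to be a single independent copy of ${\mathsf Q}_N\eta_1$; (4) verify properties (i)--(iv) and, crucially, the estimate~\eqref{E:6.1}.

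In more detail. For step (1), I would invoke the standard fact: given $\mu_1,\mu_2\in\PP(E)$ on a Polish space $E$, there is a probability space carrying random variables $X_1\sim\mu_1$, $X_2\sim\mu_2$ with $\pP\{X_1\ne X_2\}=\tfrac12\|\mu_1-\mu_2\|_{\mathrm{var}}$, and this can be done measurably in a parameter when $\mu_1,\mu_2$ depend measurably on it (see, e.g., Chapter~1 of~\cite{KS-book}). For step (2), the law ${\mathsf P}_N\eta_1$ has the explicit density $D$ displayed just before the lemma, hence $\nu_u$ has density $D(\cdot-{\mathsf P}_N S(u))$, which is a measurable (indeed continuous, since $S$ and $D$ are continuous) function of $u$; so the maximal coupling can be realized with $\zeta,\zeta'$ measurable in $(v,v',\omega)$, giving (iv). For step (3), I realize the total noise as $\eta_1 = {\mathsf P}_N\eta_1 + {\mathsf Q}_N\eta_1$, put ${\mathsf P}_N$-parts from the maximal coupling and let ${\mathsf Q}_N\zeta={\mathsf Q}_N\zeta'$ be an independent draw of ${\mathsf Q}_N\eta_1$; independence of $({\mathsf P}_N\eta_1,{\mathsf Q}_N\eta_1)$ follows from independence of the $\xi_{j1}$, which yields (ii) and, combined with the fact that both $({\mathsf P}_N\zeta,{\mathsf Q}_N\zeta)$ and $({\mathsf P}_N\zeta',{\mathsf Q}_N\zeta')$ reassemble to a variable with the law of $\eta_1$, gives (i). Property (iii) is immediate from the construction of $V,V'$ as the ${\mathsf P}_N$-images.

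The one point requiring a genuine estimate is~\eqref{E:6.1}. From step (1) applied on $H_N$, $\pP\{V\ne V'\}=\tfrac12\|\nu_v-\nu_{v'}\|_{\mathrm{var}}$. Since $\nu_v$ has density $D(x-{\mathsf P}_N S(v))$ and $\nu_{v'}$ has density $D(x-{\mathsf P}_N S(v'))$, the total variation is controlled by the $L^1$-modulus of continuity of the translation of $D$: $\|\nu_v-\nu_{v'}\|_{\mathrm{var}}=\int_{H_N}|D(x-{\mathsf P}_N S(v))-D(x-{\mathsf P}_N S(v'))|\,\dd x \le C_N\,\|{\mathsf P}_N(S(v)-S(v'))\| \le C_N\,\|S(v)-S(v')\|$, where the middle inequality uses that $D=\prod_{i=1}^N b_i^{-1}\rho_i(b_i^{-1}\cdot)$ is a product of $C^1$ densities with $\var(\rho_i)\le1$, so each factor is of bounded variation and the $L^1$-norm of a shift by a vector $w$ is at most $(\sum_i b_i^{-1}\var(\rho_i))\,|w|\le(\sum_i b_i^{-1})\,|w|=:C_N|w|$ (telescoping over the coordinates, bounding the remaining factors' $L^1$-norms by $1$). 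This is where the hypotheses $\var(\rho_j)\le1$ and $b_j\ne0$ for $j\le N$ enter, ensuring $C_N<\infty$. I expect this $L^1$-translation estimate to be the only non-routine ingredient; everything else is assembling standard maximal-coupling machinery and bookkeeping of the high/low frequency split, so I would keep that part brief and cite Lemma~3.2.6 and Section~3.2.2 of~\cite{KS-book} for the details.
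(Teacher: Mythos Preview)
The paper does not give its own proof of this lemma; it simply states that ``the following result is Lemma~3.2.6 in~\cite{KS-book}'' and proceeds to use it. Your sketch is correct and follows exactly the standard construction found in that reference---maximal coupling on the finite-dimensional part~$H_N$, identical ${\mathsf Q}_N$-parts drawn independently, and the $L^1$-translation estimate $\|\nu_v-\nu_{v'}\|_{\mathrm{var}}\le C_N\|{\mathsf P}_N(S(v)-S(v'))\|$ coming from the bounded-variation hypothesis $\var(\rho_j)\le1$---and you even cite the same source for the details, so there is nothing to add.
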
 

Using this result, we now define coupling operators by the formulas
$$
\RR(v,v',\omega)=S (v)+\zeta(v,v',\omega),\quad \RR'(v,v',\omega)=S(v')+\zeta'(v,v',\omega),
$$ 
where  $v, v'\in H$ and $\omega\in\Omega$. Let $(\Omega^k,\FF^k,\pP^k)$, $k\ge1$ be independent copies of the
probability space constructed in Lemma~\ref{L:6.1} and $(\Omega,\FF,\pP)$ be their direct product. For any~$u,u'\in H$, we set   $u_0=u$, $u_0'=u'$,  and 
\begin{align}
u_k(\omega)&=\RR(u_{k-1}(\omega),u'_{k-1}(\omega),\omega^k), &\quad
u'_k(\omega)&=\RR'(u_{k-1}(\omega),u'_{k-1}(\omega),\omega^k),\nonumber\\
\zeta_k(\omega)&=\zeta(u_{k-1}(\omega),u'_{k-1}(\omega),\omega^k), &\quad 
\zeta'_k(\omega)&=\zeta'(u_{k-1}(\omega),u'_{k-1}(\omega),\omega^k), \nonumber
\end{align} 
where $\omega=(\omega^1,\omega^2,\ldots)\in\Omega$ and $k\ge1$.  Then, by construction, $\{\zeta_k\}$ and~$\{\zeta_k'\}$ are sequences of i.i.d.  random variables, while~$\{u_k\}$ and~$\{u_k'\}$ are the corresponding trajectories of~\eqref{1.1}. We shall say that $(u_k,u_k')$ is a {\it coupled trajectory at level~$N$\/} for~\eqref{1.1} issued from~$(u,u')$.

\subsection{The result and its proof}
\label{s6.2}
Let us recall that, given a function $V\in C_b(H^\ell)$, the generalised Markov semigroup~$\PPPP_k^V$ is given by~\eqref{6.3}, the subspace $\VV\subset C_b(H^\ell)$ was introduced in Section~\ref{s8.5}, and given $s\in(0,1]$, the compact subsets $X_R\subset H^\ell$ are defined in~\eqref{6.44}. The aim of this section is to prove the following theorem on the validity of the uniform Feller property (see  Theorem~\ref{T:3.1}) for any $V\in\VV$. 

\begin{theorem} \label{t7.1}
Under the hypotheses of Theorem~\ref{t1.2}, for any $s\in(0,1]$ and $V\in\VV$ there is  an integer $R_0\ge1$ such that the sequence $\{\|\PPPP_k^V{\mathbf1}\|_R^{-1}\PPPP_k^Vf,k\ge0\}$ is uniformly equicontinuous on~$X_R$ for any $f\in \VV$ and any integer~$R\ge R_0$. 
\end{theorem}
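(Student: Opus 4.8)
The plan is to prove the uniform equicontinuity by a coupling argument in the spirit of Section~7 of~\cite{JNPS-2012}, the genuinely new point being the control of the exponential weights $\Xi_V(j)=\exp\bigl(V(\uuu_1)+\cdots+V(\uuu_j)\bigr)$ along trajectories that make large excursions, which will be handled by combining inequality~\eqref{1.4} with the stabilisability~\eqref{1.7} of~$\pppp$ and the recurrence of Section~\ref{s8.0}. Fix $V\in\VV$, say $V(\uuu)=F({\mathsf P}_{N_0}\uuu)$ with $F\in C_b^1(H_{N_0}^\ell)$, and $f\in\VV$. Since replacing $V$ by $V-\inf_{H^\ell}V$ does not affect the normalised quantity $\|\PPPP_k^V{\mathbf1}\|_R^{-1}\PPPP_k^Vf$, we may assume $V\ge0$. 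Let $R_0$ and $\wwww=\wwww_m$ be the integer and the weight furnished by Proposition~\ref{p6.3}, so that~\eqref{3.2} holds; consequently $\|\PPPP_j^Vg\|_{L_\wwww^\infty}\le M\|g\|_\infty\|\PPPP_j^V{\mathbf1}\|_{R_0}$ for every $g\in C_b(H^\ell)$, the sequence $j\mapsto\|\PPPP_j^V{\mathbf1}\|_{R_0}$ is non-decreasing (because $\PPPP_{j+1}^V{\mathbf1}\ge\PPPP_j^V{\mathbf1}$ when $V\ge0$), and $\|\PPPP_j^V{\mathbf1}\|_{R_0}\le\|\PPPP_j^V{\mathbf1}\|_R$ for $R\ge R_0$ since $X_{R_0}\subset X_R$. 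Using~\eqref{1.017} it suffices to bound $\|\PPPP_k^V{\mathbf1}\|_R^{-1}|\PPPP_k^Vf(\uuu)-\PPPP_k^Vf(\uuu')|$ for $\uuu,\uuu'\in X_R$ and $k\ge\ell$ (the finitely many $k<\ell$ being trivial since $S$ is continuous): denoting by $u,u'$ the last components of $\uuu,\uuu'$, I run the coupled trajectory $(u_j,u_j')$ of~\eqref{1.1} at a level $N\ge N_0$ as in Section~\ref{s6.1}, with the integer $N$ to be chosen large at the end depending on $V$ and $R$, write $\uuu_n=[u_{n-\ell+1},\dots,u_n]$, $\uuu_n'=[u'_{n-\ell+1},\dots,u'_n]$ for $n\ge\ell$, and denote by $\FF_j$ the $\sigma$-algebra of the first~$j$ steps of the coupling.

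\emph{Decomposition.} Let $\tau$ be the first time $j\ge1$ at which the maximal coupling of Lemma~\ref{L:6.1}(iii) fails. On $\{\tau>n\}$ one has ${\mathsf P}_Nu_i={\mathsf P}_Nu_i'$ for $1\le i\le n$, hence ${\mathsf P}_N\uuu_i={\mathsf P}_N\uuu_i'$ for $\ell\le i\le n$, and since $V,f$ depend only on the first $N_0\le N$ coordinates, $V(\uuu_i)=V(\uuu_i')$ for $\ell\le i\le n$ and $f(\uuu_n)=f(\uuu_n')$. By the strong Markov property,
\begin{equation*}
\PPPP_k^Vf(\uuu)-\PPPP_k^Vf(\uuu')=\E\bigl\{I_{\{\tau>k\}}\bigl(\Xi_V(k)-\Xi_V'(k)\bigr)f(\uuu_k)\bigr\}+\sum_{j=1}^{k}\E\bigl\{I_{\{\tau=j\}}\bigl(\Xi_V(j)\PPPP_{k-j}^Vf(\uuu_j)-\Xi_V'(j)\PPPP_{k-j}^Vf(\uuu_j')\bigr)\bigr\}.
\end{equation*}
On $\{\tau>k\}$ the ratio $\Xi_V(k)/\Xi_V'(k)=\exp\bigl(\sum_{i=1}^{\ell-1}(V(\uuu_i)-V(\uuu_i'))\bigr)$ is bounded, $\FF_{\ell-1}$-measurable, and equals $1+O(\|\uuu-\uuu'\|)$ (by the $C^1$-smoothness of~$F$ and iteration of~\eqref{1.4}, which bounds $\|u_i-u_i'\|$ for $i<\ell$ by a multiple of $\|\uuu-\uuu'\|$); using also $f(\uuu_k)=f(\uuu_k')$ there together with $\PPPP_k^V|f|(\uuu')\le M\|f\|_\infty\wwww(\uuu')\|\PPPP_k^V{\mathbf1}\|_R$ and $\sup_{X_R}\wwww<\infty$, the first term is $O(\|\uuu-\uuu'\|)$ after division by $\|\PPPP_k^V{\mathbf1}\|_R$. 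In the sum, bounding $|\PPPP_{k-j}^Vf(\uuu_j)|\le M\|f\|_\infty\wwww(\uuu_j)\|\PPPP_{k-j}^V{\mathbf1}\|_{R_0}\le M\|f\|_\infty\wwww(\uuu_j)\|\PPPP_k^V{\mathbf1}\|_R$ (and likewise for $\uuu_j'$) and dividing by $\|\PPPP_k^V{\mathbf1}\|_R$, the proof reduces to showing that
\begin{equation*}
\Sigma(\uuu,\uuu'):=\sum_{j=1}^{\infty}\E\bigl\{I_{\{\tau=j\}}\,\Xi_V(j)\,\bigl(\wwww(\uuu_j)+\wwww(\uuu_j')\bigr)\bigr\}\longrightarrow0\quad\text{as }\|\uuu-\uuu'\|\to0,
\end{equation*}
uniformly over $\uuu,\uuu'\in X_R$ and independently of~$k$ (the series no longer depends on~$k$).

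\emph{Estimate of $\Sigma$.} Conditioning the $j$-th term on $\FF_{j-1}$ and using~\eqref{E:6.1} and~\eqref{1.4} with $N=0$, on $\{\tau>j-1\}$ one has $\IP(\tau=j\mid\FF_{j-1})\le C_N\|S(u_{j-1})-S(u'_{j-1})\|\le C_N\gamma_0^{-1}e^{\pppp(u_{j-1})+\pppp(u'_{j-1})}\|u_{j-1}-u'_{j-1}\|$; moreover, since ${\mathsf Q}_N\zeta_i={\mathsf Q}_N\zeta_i'$ for all~$i$ (Lemma~\ref{L:6.1}(ii)) while ${\mathsf P}_Nu_i={\mathsf P}_Nu_i'$ for $i<j$ on $\{\tau>j-1\}$, iterating~\eqref{1.4} yields
\begin{equation*}
\|u_{j-1}-u'_{j-1}\|\le\gamma_N^{-(j-1)}\Bigl(\prod_{i=0}^{j-2}e^{\pppp(u_i)+\pppp(u'_i)}\Bigr)\|\uuu-\uuu'\|.
\end{equation*}
Plugging this in, applying H\"older's inequality to separate $I_{\{\tau=j\}}$ from $\wwww(\uuu_j)+\wwww(\uuu_j')$ (with a small exponent on the indicator), and then using $\Xi_V(j)\le e^{j\|V\|_\infty}$, the one-step bound~\eqref{8.4} for the conditional moments of $\wwww(\uuu_j)$, the stabilisability~\eqref{1.7} (which bounds $\E\exp\bigl(p\sum_{i=0}^{j-1}(\pppp(u_i)+\pppp(u'_i))\bigr)$ by $C(R)e^{cj}$ for small $p>0$, uniformly over $\uuu,\uuu'\in X_R$), and the polynomial moment bound for $\wwww_m$ of Lemma~\ref{l3.1}, I expect to obtain
\begin{equation*}
\E\bigl\{I_{\{\tau=j\}}\,\Xi_V(j)\,\bigl(\wwww(\uuu_j)+\wwww(\uuu_j')\bigr)\bigr\}\le C(V,R)^{\,j}\,\gamma_N^{-p(j-1)}\,\|\uuu-\uuu'\|^{\,p}
\end{equation*}
for some $p\in(0,1)$ and some $C(V,R)$ independent of $j$, $N$, $\uuu$ and~$\uuu'$. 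Choosing $N$ so large that $\gamma_N^{\,p}>2\,C(V,R)$, the series converges and $\Sigma(\uuu,\uuu')\le C'(V,R)\,\|\uuu-\uuu'\|^{\,p}$; combined with the estimate of the first term, this gives the uniform equicontinuity of $\{\|\PPPP_k^V{\mathbf1}\|_R^{-1}\PPPP_k^Vf\}$ on~$X_R$, with $R_0$ the integer fixed above.

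\emph{Main obstacle.} The delicate point is this last estimate: the factors $\Xi_V(j)$, $\prod_i e^{\pppp(u_i)+\pppp(u'_i)}$ and $\wwww(\uuu_j)$ all grow exponentially in~$j$, so one must split the gain $\gamma_N^{-(j-1)}$ coming from~\eqref{1.4} into pieces, each compensating one of those growths through the exponential moments available (stabilisability of~$\pppp$, the integrability~\eqref{1.8} of the noise, the recurrence of Proposition~\ref{p8.1}), so that after summation over~$j$ only a fixed small negative power of~$\gamma_N$ survives; that power is then made as small as desired by enlarging~$N$. Everything else --- the reduction to the one-step map, the treatment of the $\{\tau>k\}$ term, and the passage to conditional expectations --- is routine given Lemma~\ref{L:6.1} and the estimates of Sections~\ref{s8.0}--\ref{s8.3}.
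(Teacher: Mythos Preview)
Your proposal is correct and follows the same coupling scheme as the paper. The only organisational difference is that the paper, instead of absorbing the random factor $\exp\bigl(\sum_i(\pppp(u_i)+\pppp(u_i'))\bigr)$ through a small H\"older exponent on the indicator, uses a double stratification in the decoupling time~$r$ and the level~$\rho$ of that sum (events $A_{r,\rho}=\{\tau=r\}\cap\{\rho-1<\sum_{i<r}(\pppp(u_i)+\pppp(u_i'))\le\rho\}$), bounding $\IP(A_{r,\rho})$ by the minimum of the coupling estimate $C\gamma_N^{-r}e^{2\rho}d$ and the stabilisability tail $Ce^{cr-\sigma\rho}$, and then summing the resulting double series for~$N$ large; your route effectively integrates out~$\rho$ via the inequality $\IP(\tau=j\mid\FF_{j-1})^{\theta}\le(\text{conditional bound})^{\theta}$ for~$\theta$ small enough that~\eqref{1.7} controls $\E e^{\theta\Sigma_{j-1}}$, which is equivalent and arguably more direct. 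One minor imprecision: your constant $C(V,R)$ does depend on~$N$ through the $C_N$ of Lemma~\ref{L:6.1}, but only as a prefactor, not in the geometric rate, so the choice of~$N$ at the end is unaffected.
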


\begin{proof} 
We invoke some ideas from~\cite[Chapter 3]{KS-book} which were used to establish exponential mixing for system~\eqref{1.1}.   The proof is divided into five steps. 

\smallskip
{\it Step~1: Reduction}.
Let us fix two functions $V,f\in\VV$. With a slight abuse of notation, we shall use the same letters to denote the function~$F$ entering representation~\eqref{6.51} for~$V$ and~$f$. Without loss of gene\-rality, we can assume that    \eqref{6.51} holds for~$V$ and~$f$ with the same integer~$N_0$.
  Furthermore, in view of~\eqref{1.017}, for $k\ge1$ and $f\in C_b(H^\ell)$, we have 
$$
\PPPP_k^Vf(\uuu)=\E_{u^\ell}\bigl\{(\Xi_Vf)(\uuu_k, k)\bigr\},
$$
where $\uuu=[u^1,\dots,u^\ell]\in H^\ell$,  $\uuu_k=[\uuu,u_{1},\dots,u_k]$, 
\begin{equation}\label{E:7.2}
(\Xi_Vf)(\uuu^k, k):=\exp\biggl(\sum_{j=1}^kV(u_{j-\ell+1},\dots,u_j)\biggr)f(u_{k-\ell+1},\dots,u_k),
\end{equation}
and $u_{i}=u^{\ell-i}$ for $i\in[2-\ell,0]$. 
We need to prove the uniform equicontinuity of the sequence $\{g_k,k\ge \ell\}$ on~$X_R$, where 
$$
g_k(\uuu)= \|\PPPP_k^V{\mathbf1}\|_R^{-1}\PPPP_k^Vf(\uuu). 
$$
There is no loss of generality in assuming that $0\le f\le1$ and $\inf_H V=0$, so that $\Osc_H(f)\le1$ and $\Osc_H(V)=\|V\|_\infty$.

\smallskip
{\it Step~2: Stratification}. Let us take two points $\zzz_i=[z_i^{1},\dots,z_i^\ell]\in X_R$, $i=1,2$ and  an integer $N\ge N_0$. 
In what follows, we denote by $(\Omega,\FF,\pP)$  the probability space constructed in Section~\ref{s6.1} and by $(u_k,u_k'):=(u_k(z^\ell_1), u_k'(z^\ell_2))$ a coupled trajectory at level~$N$ issued from $(z^\ell_1,z^\ell_2)\in H\times H$. Let us set
\begin{gather*}
\bar G(r)=\bigcap_{j=1}^rG(j), \quad  G(j)=\{{\mathsf P}_Nu_j ={\mathsf P}_Nu_j'\},\\
F(r,\rho)=\biggl\{\sum_{j=0}^{r}  \left(\pppp(u_j  )+\pppp(u_j' )\right) \le  \rho\biggr\}, \quad F(r,0)=\varnothing, 
\end{gather*}
where $r,\rho\ge1$ are integers. We also define the pairwise disjoint events\,\footnote{\,Of course, the events~$A_{r,\rho}$ depend also on~$\zzz_1,\zzz_2\in X_R$ and $N$. However, to simplify the notation, we do not indicate that dependence.}
$$
A_{r,\rho}:=\bigl(\bar G(r-1)\cap G(r)^c\cap F(r-1,\rho)\bigr)\setminus F(r-1,\rho-1),\quad r,\rho\ge1,
$$
where $\bar G(0)=\Omega$.
Using the fact that~$f(\uuu)$ and~$V(\uuu)$ depend only on~${\mathsf P}_N\uuu$ and setting 
$$
\uuu_k=[\zzz_1,u_1,\dots,u_k], \quad \uuu_k'=[\zzz_2,u_1',\dots,u_k'], \quad
\vvv_k=[\zzz_2,u_1,\dots,u_k],
$$
for $k\ge\ell$ we write
 \begin{align} 
\PPPP_k^Vf(\zzz_1)-\PPPP_k^Vf(\zzz_2)
&=I^k(\zzz_1,\zzz_2)+\sum_{r=1}^k\sum_{\rho=1}^{\ty} I_{r,\rho}^k(\zzz_1,\zzz_2)
\label{E:7.3}
\end{align}
where we set
\begin{align*}
I^k(\zzz_1,\zzz_2)&= \E\bigl\{(\Xi_Vf)(\uuu_k, k)-(\Xi_Vf)(\vvv_k, k)\bigr\},\\
I_{r,\rho}^k(\zzz_1,\zzz_2)&=\E \bigl\{I_{A_{r,\rho}}\bigl[(\Xi_Vf)(\vvv_k,k ) -(\Xi_Vf)(\uuu_k',k)\bigr] \bigr\}.  
\end{align*}
To prove the uniform equicontinuity of~$\{g_k, k\ge\ell\}$, we first estimate these two quantities.

\smallskip
{\it Step~3: Estimates for $I^k$ and $I_{r,\rho}^k$}. Since~$V$ is bounded and globally Lipschitz continuous and~$f$ satisfies the inequality $0\le f\le 1$, for $\zzz_1,\zzz_2\in X_R$ we have
\begin{equation} \label{6.4}
|I^k(\zzz_1,\zzz_2)|\le C_1d\,\|\PPPP_k^V{\bf1}\|_R,
\end{equation}
where $d=\|\zzz_1-\zzz_2\|$. Furthermore, using the positivity of~$\Xi_Vf$, the inequalities $0<f\le 1$ and $V\ge0$, and the Markov property, we derive
\begin{align*}
I_{r,\rho}^k(\zzz_1,\zzz_2)
&\le\E\bigl\{I_{A_{r,\rho}}(\Xi_Vf)(\vvv_k,k)\bigr\}
\le \E\bigl\{I_{A_{r,\rho}}(\Xi_{V}{\mathbf1})(\vvv_k,k)\bigr\}\\
&= \E\bigl\{I_{A_{r,\rho}}\E\bigl[(\Xi_{V}{\mathbf1})(\vvv_k,k)\,\big|\,\FF_r\bigr]\bigr\}
\le e^{(r+\ell)\|V\|_\infty}\E\bigl\{I_{A_{r,\rho}}(\PPPP_{k-r}^V{\mathbf1})(u_r)\bigr\},
\end{align*}
where $\{\FF_k\}$ stands for the filtration generated by~$(u_k,u_k')$. 
Applying the inequality 
$$
\PPPP_{k-r}^{V}{\mathbf1}(\zzz)\le M\|\PPPP_{k-r}^{V}{\mathbf1}\|_{R_0}\wwww_m(\zzz),
$$
which follows from~\eqref{3.2}, we obtain
\begin{align*}
I_{r,\rho}^k(\zzz_1,\zzz_2)
&\le M e^{(r+\ell)\|V\|_\infty}\|\PPPP_{k-r}^V{\mathbf1}\|_{R_0}\E\bigl\{I_{A_{r,\rho}}\wwww_m(u_r)\bigr\}\\
&\le M e^{(r+\ell)\|V\|_\infty}\|\PPPP_{k-r}^V{\mathbf1}\|_{R_0}\bigl\{\pP(A_{r,\rho})\,\E\,\wwww_m^2(u_r)\bigr\}^{1/2}. 
\end{align*}
Combining this with~\eqref{3.03} and using the symmetry, for $\zzz_1,\zzz_2\in X_R$ and $R\ge R_0$, we derive
\begin{equation} \label{6.5}
|I_{r,\rho}^k(\zzz_1,\zzz_2)|\le C_2(R,V)e^{r\|V\|_\infty}\|\PPPP_k^V{\mathbf1}\|_R\,\pP(A_{r,\rho})^{1/2}. 
\end{equation}

\smallskip
{\it Step~4: An estimate for $\pP(A_{r,\rho})$}. 
We now  prove that
\begin{equation}\label{E:7.7}
\pP(A_{r,\rho}) \le C_3(R,N)\bigl(\{\gamma_N^{-r}e^{2\rho}d\}\wedge e^{cr- \sigma\rho}\bigr),
\end{equation}
where $\sigma>0$ does not depend on the other parameters. To this  end, note that, in view of~\eqref{E:8.6}, on the event $\bar G(r-1)\cap F(r-1,\rho)$, we have
\begin{align*}
\|u_{r-1}-u_{r-1}'\|
&\le \gamma_N^{1-r}\exp\biggl(\,\sum_{k=0}^{r-2}\bigl(\pppp(u_k)+\pppp(u_k')\bigr)\biggr)\|z_1^\ell-z_2^\ell\| \\
&\le \gamma_N^{1-r}\exp\bigl\{2\rho-\pppp(u_{r-1})-\pppp(u_{r-1}')\bigr\}\,d. 
\end{align*}
Combining this with~\eqref{E:6.1} and~\eqref{1.4} (for $N=0$) and using the Markov property, we derive
\begin{align} 
\IP\bigl\{\bar G(r-1)\cap G(r)^c \cap F(r-1,\rho)\bigr\}
&=\E\bigl\{I_{\bar G(r-1)\cap F(r-1,\rho)}\E\bigl(I_{G(r)^c}\,\big|\,\FF_{r-1}\bigr)\bigr\}
\notag\\
&\le C_N\,\gamma_0^{-1}\gamma_N^{1-r}e^{2\rho}d. 
\label{6.7}
\end{align}
On the other hand, we have $A_{r,\rho}\subset F(r-1,\rho-1)^c$. Recalling~\eqref{1.7} and~\eqref{1.09} and using the Chebyshev inequality and the boundedness op~$\pppp$ on any ball of~$H$, we obtain
\begin{align*} 
\pP(A_{r,\rho})
&\le e^{-\frac{\de}{2}(\rho-1)} \E\exp\biggl(\frac{\de}{2} \sum_{j=0}^{r-1}\bigl(\pppp(u_j )+\pppp(u_j')\bigr)\biggr)\\
&\le e^{-\frac{\de}{2}(\rho-1)} \biggl\{\E\exp\biggl(\de \sum_{j=0}^{r-1}\pppp(u_j )\biggr)\,
 \E\exp\bigg(\de\sum_{j=0}^{r-1}\pppp(u_j')\biggr)\biggr\}^{\frac12}\\
 &\le C_4(R)\,e^{c r-\sigma\rho},
\end{align*}
where $\sigma=\delta/2$. Combining this with~\eqref{6.7}, we get the required inequality~\eqref{E:7.7}. 

\smallskip
{\it Step~5: Completion of the proof}. 
Inequalities~\eqref{E:7.3}--\eqref{E:7.7} imply that
$$
\bigl|g_k(\zzz_1)-g_k(\zzz_2)\bigr|
\le C_1d+C_5(R,V,N)\sum_{r,\rho=1}^\infty 
e^{r\|V\|_\infty}\bigl(\{\gamma_N^{-r}e^{2\rho}d\}\wedge e^{cr- \sigma\rho}\bigr)^{\frac12}
$$
for $\zzz_1,\zzz_2\in X_R$, $k\ge\ell$, and $R\ge R_0$. Since the sum on the right-hand side  vanishes for $d=0$, the uniform equicontinuity of~$\{g_k\}$ will be established if we prove that the series converges uniformly in~$d\in[0,1]$. In view of the positivity and monotonicity of its terms, it suffices to show that
\begin{equation} \label{E:7.5}
\sum_{r,\rho=1}^\infty e^{r\|V\|_\infty}\bigl(\{\gamma_N^{-r}e^{2\rho}\}\wedge e^{cr- \sigma\rho}\bigr)^{\frac12}<\infty.
\end{equation}

To this end, let us fix a number $D>1$ and  choose $N\ge1$ so large that $\log\gamma_N \ge2(\|V\|_\infty+D)$. Then  
\begin{equation} \label{6.10}
\gamma_N^{-r/2}e^{r\|V\|_\infty}\le e^{-Dr}.
\end{equation}
Define the sets
$$
S_1=\{(r,\rho)\in\N^2: \rho-Dr\le-\rho-r\}, \quad S_2=\N^2\setminus S_1.
$$
Using~\eqref{6.10}, it is straightforward to check that
$$
\sum_{(r,\rho)\in S_1}e^{r\|V\|_\infty} \bigl(\gamma_N^{-r}e^{2\rho}\bigr)^{1/2}\le\frac{1}{(e-1)^2}.
$$
Furthermore, if $D> 1+\sigma^{-1}(8\|V\|_\ty+4c)$, then
$$
\sum_{(r,\rho)\in S_2} e^{r\|V\|_\ty}e^{(cr-\sigma\rho)/2}\le C_6 \sum_{\rho=1}^\infty   e^{-\sigma\rho/4}<\infty.
$$
Combining the last two inequalities, we see that~\eqref{E:7.5} holds. The proof of Theorem~\ref{t7.1} is complete. 
\end{proof}

\section{Appendix}
\label{s9}

\subsection{A priori estimates}
\label{s9.0}
In this section, we establish some simple estimates for trajectories of the Markov process~\eqref{1.1}. Recall that Conditions~(A)--(C) are formulated in Section~\ref{s1.1}. 

\begin{lemma} \label{l3.1}
Suppose that Condition {\rm(A)}  holds, and $\{\eta_k\}$ is a sequence of i.i.d.\ random variables in~$H$ such that $\E e^{\delta\varPhi(\eta_1)}<\infty$ for some $\delta>0$. Then,  for any $k\ge0$ and any initial measure $\lambda\in\PP(H)$, we have
\begin{equation} \label{3.02}
\E_\lambda\varPhi(u_k)\le q^k\int_H\varPhi(v)\lambda(\dd v)+C(1-q)^{-1}\E\,\varPhi(\eta_1).
\end{equation}
If, in addition,  $\varkappa>0$ is so small that $C\varkappa(1-q)^{-1}\le \delta$, then
\begin{equation} \label{3.01}
\E_\lambda\exp\biggl(\varkappa\sum_{n=0}^k\varPhi(u_n)\biggr)
\le \bigl(\E e^{\delta\,\varPhi(\eta_1)}\bigr)^k\int_H\exp\bigl\{\varkappa(1-q)^{-1}\varPhi(v)\bigr\}\lambda(\dd v).
\end{equation}
\end{lemma}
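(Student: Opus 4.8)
The plan is to derive a pathwise (deterministic) bound on the expressions inside the expectations by iterating the dissipativity estimate~\eqref{1.04}, and only then integrate, exploiting the independence of $u_0$ (distributed according to~$\lambda$) from the i.i.d.\ sequence $\{\eta_k\}$.

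First I would prove~\eqref{3.02}. Applying~\eqref{1.04} with $u=u_{n-1}$ and $v=\eta_n$ to the relation $u_n=S(u_{n-1})+\eta_n$ gives the one-step bound $\varPhi(u_n)\le q\,\varPhi(u_{n-1})+C\,\varPhi(\eta_n)$ for every $n\ge1$. Taking $\E_\lambda$ of both sides and using that $\eta_n$ is independent of $u_{n-1}$ with $\E\varPhi(\eta_n)=\E\varPhi(\eta_1)$, we obtain $\E_\lambda\varPhi(u_n)\le q\,\E_\lambda\varPhi(u_{n-1})+C\,\E\varPhi(\eta_1)$; iterating this and summing the resulting geometric series, together with $\E_\lambda\varPhi(u_0)=\langle\varPhi,\lambda\rangle$, yields~\eqref{3.02}. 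The finiteness of $\E e^{\delta\varPhi(\eta_1)}$ and the lower bound $\varPhi\ge1$ from~\eqref{1.03} guarantee $\E\varPhi(\eta_1)<\infty$, so there is no integrability issue.

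For~\eqref{3.01}, I would iterate the same one-step bound all the way down to get, for each $n\ge0$,
\[
\varPhi(u_n)\le q^n\varPhi(u_0)+C\sum_{j=1}^n q^{n-j}\varPhi(\eta_j).
\]
Summing over $n=0,\dots,k$ and interchanging the order of summation in the double sum, all geometric factors are controlled by $\sum_{n\ge0}q^n=(1-q)^{-1}$, which gives the deterministic estimate
\[
\sum_{n=0}^k\varPhi(u_n)\le (1-q)^{-1}\varPhi(u_0)+C(1-q)^{-1}\sum_{j=1}^k\varPhi(\eta_j).
\]
Multiplying by $\varkappa$ and using the hypothesis $C\varkappa(1-q)^{-1}\le\delta$ together with $\varPhi\ge0$, I bound $\varkappa\sum_{n=0}^k\varPhi(u_n)\le \varkappa(1-q)^{-1}\varPhi(u_0)+\delta\sum_{j=1}^k\varPhi(\eta_j)$ pointwise on~$\Omega$. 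Exponentiating and taking $\E_\lambda$, the factorisation of the exponential together with the independence of $u_0\sim\lambda$ from the i.i.d.\ family $\eta_1,\dots,\eta_k$ turns the right-hand side into $\langle e^{\varkappa(1-q)^{-1}\varPhi},\lambda\rangle\,\bigl(\E e^{\delta\varPhi(\eta_1)}\bigr)^k$, which is exactly~\eqref{3.01}.

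The argument is essentially routine; the only point requiring care is the bookkeeping in the double sum $\sum_{n=0}^k\sum_{j=1}^n q^{n-j}\varPhi(\eta_j)$ and the verification that the constant $C(1-q)^{-1}$ accrued from summing the geometric series is precisely the one appearing in the exponent on the right of~\eqref{3.01}, so that the smallness condition $C\varkappa(1-q)^{-1}\le\delta$ is exactly what is needed to absorb the noise terms into $\E e^{\delta\varPhi(\eta_1)}$. Obtaining the estimate pathwise before integrating is what keeps the independence step clean; an alternative via a Lyapunov/submartingale-type induction on~$k$ would also work but seems less transparent here.
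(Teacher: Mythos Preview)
Your proof is correct. For~\eqref{3.02} you do exactly what the paper does: take expectations of the one-step bound and iterate. For~\eqref{3.01} you take a genuinely different route. The paper proceeds by the ``Lyapunov/submartingale-type induction'' you mention in passing: it uses the Markov property to write $\E_v e^{\varkappa\varPhi(u_1)}\le e^{q\varkappa\varPhi(v)}\E e^{C\varkappa\varPhi(\eta_1)}$ and then inducts on~$k$, at each step absorbing the factor $e^{q\varkappa(1-q)^{-1}\varPhi(u_{k-1})}$ coming from the inductive hypothesis into the running exponent via the identity $1+q(1-q)^{-1}=(1-q)^{-1}$. Your approach instead iterates the deterministic bound $\varPhi(u_n)\le q\varPhi(u_{n-1})+C\varPhi(\eta_n)$ all the way down to a pathwise inequality $\sum_{n=0}^k\varPhi(u_n)\le(1-q)^{-1}\varPhi(u_0)+C(1-q)^{-1}\sum_{j=1}^k\varPhi(\eta_j)$, and only then exponentiates and factorises using independence. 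Your method is more elementary and makes the role of the constant $C\varkappa(1-q)^{-1}$ completely transparent; the paper's inductive argument is the textbook Lyapunov-function pattern and generalises more readily to settings where a clean pathwise iterate is not available (e.g.\ when the one-step bound holds only in expectation). Both land on the identical constants.
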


\begin{proof}
It follows from~\eqref{1.04} that 
$$
\E_\lambda\varPhi(u_k)\le q \,\E_\lambda\varPhi(u_{k-1})+C\,\E\,\varPhi(\eta_k).
$$
Iterating this inequality, we obtain~\eqref{3.02}. Furthermore, using again~\eqref{1.04}, we write  
$$
\E_v\exp(\varkappa\,\varPhi(u_1))\le e^{q\varkappa\varPhi(v)}\E e^{C\varkappa\varPhi(\eta_1)},
$$
where $\varkappa>0$ satisfies the inequality given in the statement. Combining this with the Markov property and arguing by induction, we obtain~\eqref{3.01}.
\end{proof}

\begin{remark} \label{r3.2}
A similar argument shows that if $\varkappa>0$ is so small that $C\varkappa\le\delta$, then
\begin{equation} \label{3.03}
\E_\lambda e^{\varkappa\varPhi(u_k)}\le \bigl(\E\,e^{\delta\varPhi(\eta_1)}\bigr)^{1/(1-q)}\int_He^{q^k\delta\varPhi(v)}\lambda(\dd v).
\end{equation}
A simple application of the Fatou lemma implies now that, for any stationary measure $\mu\in\PP(H)$ of~\eqref{1.1}, we have
\begin{equation} \label{3.04}
\int_He^{\varkappa\varPhi(v)}\mu(\dd v)\le \bigl(\E\,e^{\delta\varPhi(\eta_1)}\bigr)^{1/(1-q)}. 
\end{equation}
\end{remark}

\subsection{Foia\c{s}--Prodi type  estimate}
\label{s9.1}
Let $H_N$ be a sequence of finite-dimensional spaces satisfying Condition~(B) with some    $\gamma_N$, let 
${\mathsf P}_N:H\to H$ be the orthogonal projection onto~$H_N$, and let ${\mathsf Q}_N:=I-{\mathsf P}_N$. We consider some sequences  $u_k, u_k', \zeta_k,\zeta_k'\in H$  such that 
$$
u_k=S(u_{k-1})+\zeta_k,\quad u_k'=S(u_{k-1}')+\zeta_k'.
$$
\begin{lemma}  
Suppose that
\begin{align}
&{\mathsf P}_N u_j={\mathsf P}_Nu_j',\quad {\mathsf Q}_N\zeta_j={\mathsf Q}_N\zeta_j'\quad \mbox{for $1\le j\le n$},
\label{E:8.5}
\end{align}
where  $N\ge1$ is an integer. Then
\begin{align}
\label{E:8.6}
\|u_n-u_n'\|&\le\gamma_N^{- n}  \exp\biggl(\,\sum_{j=0}^{n-1}( \pppp(u_j )+\pppp(u_j'))\biggr)\|u_0-u_0'\|.
\end{align}  
\end{lemma}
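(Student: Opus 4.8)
The plan is to iterate the Foia\c{s}--Prodi type estimate~\eqref{1.4} along the pair of trajectories, using the coincidence of low modes~\eqref{E:8.5} at each step to reduce the control of $u_j-u_j'$ to that of the high-mode projection, which in turn is contracted by the factor $\gamma_N^{-1}$. First I would observe that, since ${\mathsf P}_N\zeta_j={\mathsf P}_N\zeta_j'$ would not in general hold, the relevant cancellation comes from the noise affecting only the low modes of the increment; indeed from $u_j=S(u_{j-1})+\zeta_j$ and $u_j'=S(u_{j-1}')+\zeta_j'$ together with ${\mathsf Q}_N\zeta_j={\mathsf Q}_N\zeta_j'$ we get
\begin{equation*}
{\mathsf Q}_N(u_j-u_j')={\mathsf Q}_N\bigl(S(u_{j-1})-S(u_{j-1}')\bigr).
\end{equation*}
On the other hand, the hypothesis ${\mathsf P}_Nu_j={\mathsf P}_Nu_j'$ kills the low-mode part entirely, so $u_j-u_j'={\mathsf Q}_N(u_j-u_j')$ for $1\le j\le n$.

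Next I would apply~\eqref{1.4} with the pair $(u_{j-1},u_{j-1}')$ in place of $(u,v)$, obtaining
\begin{equation*}
\|u_j-u_j'\|=\bigl\|{\mathsf Q}_N\bigl(S(u_{j-1})-S(u_{j-1}')\bigr)\bigr\|
\le\gamma_N^{-1}\exp\bigl\{\pppp(u_{j-1})+\pppp(u_{j-1}')\bigr\}\,\|u_{j-1}-u_{j-1}'\|
\end{equation*}
for each $j=1,\dots,n$. Multiplying these $n$ inequalities together and telescoping the norms $\|u_{j}-u_j'\|$ yields
\begin{equation*}
\|u_n-u_n'\|\le\gamma_N^{-n}\exp\biggl(\,\sum_{j=1}^{n}\bigl(\pppp(u_{j-1})+\pppp(u_{j-1}')\bigr)\biggr)\|u_0-u_0'\|
=\gamma_N^{-n}\exp\biggl(\,\sum_{j=0}^{n-1}\bigl(\pppp(u_j)+\pppp(u_j')\bigr)\biggr)\|u_0-u_0'\|,
\end{equation*}
which is exactly~\eqref{E:8.6}.

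There is essentially no obstacle here: the only point requiring a moment's care is the bookkeeping of which index of $\pppp$ appears (the sum runs over $j=0,\dots,n-1$, i.e.\ over the states \emph{before} each application of~$S$), and the observation that~\eqref{1.4} is applied with the same high-mode projection ${\mathsf Q}_N$ at every step, with the function~$\pppp$ being independent of the step by the remark following Condition~(B). I would also note in passing that~\eqref{E:8.5} is used for indices $1\le j\le n$, which is precisely what is needed to reduce $u_j-u_j'$ to its high-mode part for $j=1,\dots,n$ and to cancel the noise in the increment for $j=1,\dots,n$; the estimate then propagates the initial discrepancy $\|u_0-u_0'\|$ through $n$ contracting steps.
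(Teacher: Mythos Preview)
Your proof is correct and follows essentially the same approach as the paper: reduce $u_j-u_j'$ to its ${\mathsf Q}_N$-part using~\eqref{E:8.5}, apply~\eqref{1.4}, and iterate. The paper's version is just a more compressed rendering of exactly what you wrote.
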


\begin{proof} 
From~\eqref{E:8.5} and~\eqref{1.4} it follows that  
\begin{align*}
\|u_n-u_n'\|&= \|Q_N(u_n-u_n')\|=\|Q_N(S(u_{n-1})-S(u_{n-1}'))\|\nonumber\\
&\le  \gamma_N^{-1}\exp\bigl\{\pppp(u_{n-1})+\pppp(u_{n-1}')\bigr\}\|u_{n-1}-u_{n-1}'\|. 
\end{align*}
Iteration of this inequality results in~\eqref{E:8.6}.
\end{proof}
 
\subsection{A property of convex functions}
\label{s9.2}
Given a convex function $f:\R^n\to\R\cup\{+\infty\}$, let $D(f)=\{x\in\R^n:f(x)<\infty\}$. It is clear that either $D(f)=\varnothing$ or~$D(f)$ is a convex subset of~$\R^n$. We define the {\it relative interior\/} of~$D(f)$, denoted by $\RInt D(f)$, as the interior of the set~$D(f)$ considered as a subset of its affine hull. The {\it conjugate function\/} of~$f$ is defined by
$$
f^*(y)=\sup_{x\in\R^n}\bigl(\langle x,y\rangle_n-f(x)\bigr). 
$$
The {\it subdifferential\/} of~$f$ at a point $x\in D(f)$, denoted by~$\p f(x)$, is the (convex) set of vectors $y\in\R^n$ such that
$$ 
f(z)-f(x)\ge \langle z-x,y\rangle_n\quad\mbox{for any $z\in\R^n$}. 
$$
The following proposition used in Section~\ref{s4} is a particular case of more general results established in Section~23 of~\cite{rockafellar1997}.

\begin{proposition} \label{p9.2}
Let $f:\R^n\to\R\cup\{+\infty\}$ be a lower semicontinuous convex  function and let $x\in D(f)$. Then for any $\delta>0$ there are $x_\delta\in D(f)$ and $y_\delta\in \R^n$ such that 
\begin{equation} \label{9.11}
f(x_\delta)<f(x)+\delta, \quad 
f^*(y_\delta)=\langle x_\delta,y_\delta\rangle_n-f(x_\delta). 
\end{equation}
\end{proposition}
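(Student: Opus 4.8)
The statement to prove is Proposition~\ref{p9.2}: for a lower semicontinuous convex function $f:\R^n\to\R\cup\{+\infty\}$ and a point $x\in D(f)$, one can find $x_\delta\in D(f)$ with $f(x_\delta)<f(x)+\delta$ such that the subdifferential $\p f(x_\delta)$ is nonempty, i.e. there is $y_\delta$ with $f^*(y_\delta)=\langle x_\delta,y_\delta\rangle_n-f(x_\delta)$ (the Fenchel equality, which is equivalent to $y_\delta\in\p f(x_\delta)$).

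Let me think about how to prove this. The key classical fact is that a finite convex function on $\R^n$ is subdifferentiable at every point of the relative interior of its effective domain — this is a standard result in convex analysis (Rockafellar, Theorem 23.4): $\p f(x)\neq\varnothing$ whenever $x\in\RInt D(f)$. So if the given point $x$ already lies in $\RInt D(f)$, we are done with $x_\delta=x$. The issue is only when $x$ is a relative boundary point of $D(f)$. In that case, I want to perturb $x$ slightly into the relative interior while controlling the increase in $f$. Since $D(f)$ is convex and nonempty, $\RInt D(f)$ is nonempty; pick any $z\in\RInt D(f)$. Then for $t\in(0,1]$ the point $x_t:=(1-t)x+tz$ lies in $\RInt D(f)$ (a standard line-segment property of relative interiors: the half-open segment from a boundary point to a relative interior point lies in the relative interior). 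By convexity, $f(x_t)\le (1-t)f(x)+tf(z)$. Since $f(x)<\infty$ and $f(z)<\infty$, we get $f(x_t)\le f(x)+t(f(z)-f(x))$, so as $t\to0^+$ we have $\limsup_{t\to0^+}f(x_t)\le f(x)$, hence for $t$ small enough $f(x_t)<f(x)+\delta$. (We don't even need lower semicontinuity here — the upper bound suffices — though lower semicontinuity guarantees $f^*$ is well-behaved and the Fenchel equality formulation is clean.)

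So the plan is: first, reduce to the case $x\notin\RInt D(f)$, handling $x\in\RInt D(f)$ directly by Rockafellar's subdifferentiability theorem. Second, choose $z\in\RInt D(f)$ (nonempty since $D(f)$ is a nonempty convex set). Third, set $x_\delta=x_t$ for $t>0$ small enough that $f(x_t)<f(x)+\delta$, using the convexity estimate above; note $x_\delta\in\RInt D(f)$. Fourth, invoke $\p f(x_\delta)\neq\varnothing$, pick $y_\delta\in\p f(x_\delta)$, and observe that $y_\delta\in\p f(x_\delta)$ is equivalent to the Fenchel equality $f^*(y_\delta)+f(x_\delta)=\langle x_\delta,y_\delta\rangle_n$, which is exactly the second relation in~\eqref{9.11}.

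The main obstacle, such as it is, is really just locating and citing the right pieces of convex analysis: the nonemptiness of $\RInt D(f)$ for nonempty convex $D(f)$, the line-segment principle for relative interiors, and the subdifferentiability of a convex function at relative-interior points of its domain — all of which are in Section~23 (and the surrounding sections) of~\cite{rockafellar1997}. Since the proposition is explicitly stated there as a special case, the cleanest route is to present the short reduction argument above and refer to Rockafellar for each ingredient; there is no genuinely hard analytic step, only bookkeeping about relative interiors and the Fenchel–Young equality.
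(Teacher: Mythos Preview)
Your proposal is correct and follows essentially the same route as the paper: perturb $x$ into $\RInt D(f)$ while keeping $f$ within $\delta$ of $f(x)$, then invoke nonemptiness of the subdifferential at relative-interior points and the Fenchel--Young equality. The only cosmetic differences are that the paper treats the degenerate case $D(f)=\{x\}$ separately (your case~1 already covers it, since a singleton equals its own relative interior) and sketches the subdifferentiability via directional derivatives rather than citing Rockafellar's Theorem~23.4 directly; your explicit convexity bound $f(x_t)\le(1-t)f(x)+tf(z)$ is in fact a bit cleaner than the paper's appeal to one-variable continuity.
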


\begin{proof}
We first assume that $D(f)=\{x\}$. In this case, we have 
$$
f^*(y)=\sup_{z\in D(f)}\bigl(\langle z,y\rangle_n-f(z)\bigr)=\langle x,y\rangle_n-f(x),
$$
so that one can take $x_\delta=x$ for any $\delta>0$ and an arbitrary $y_\delta\in\R^n$. 

\smallskip
We now assume that $D(f)$ contains more than one point. Using the continuity of a convex function of one variable on an open set, we can find $x_\delta\in \RInt D(f)$ such that the inequality in~\eqref{9.11} holds. 
Let us denote by~$E$ the vector span of $D(f)-x_\delta$ and define the directional  derivative
$$
f'(x_\delta;\xi)=\lim_{s\to0^+}\frac{f(x_\delta+s\xi)-f(x_\delta)}{s}, \quad \xi\in E.
$$ 
Then $\xi\mapsto f'(x_\delta;\xi)$ is a homogeneous lower semicontinuous  convex  function on~$E$ which is finite everywhere. It follows that 
$$
f'(x_\delta;\xi)=\sup_{z\in\p f(x_\delta)}\langle z,\xi\rangle_n.
$$
In particular, $\p f(x_\delta)$ is not empty. Any vector $y\in\p f(x_\delta)$ satisfies the equality in~\eqref{9.11}. 
\end{proof}

\addcontentsline{toc}{section}{Bibliography}
\def\cprime{$'$} \def\cprime{$'$}
  \def\polhk#1{\setbox0=\hbox{#1}{\ooalign{\hidewidth
  \lower1.5ex\hbox{`}\hidewidth\crcr\unhbox0}}}
  \def\polhk#1{\setbox0=\hbox{#1}{\ooalign{\hidewidth
  \lower1.5ex\hbox{`}\hidewidth\crcr\unhbox0}}}
  \def\polhk#1{\setbox0=\hbox{#1}{\ooalign{\hidewidth
  \lower1.5ex\hbox{`}\hidewidth\crcr\unhbox0}}} \def\cprime{$'$}
  \def\polhk#1{\setbox0=\hbox{#1}{\ooalign{\hidewidth
  \lower1.5ex\hbox{`}\hidewidth\crcr\unhbox0}}} \def\cprime{$'$}
  \def\cprime{$'$} \def\cprime{$'$} \def\cprime{$'$}
\providecommand{\bysame}{\leavevmode\hbox to3em{\hrulefill}\thinspace}
\providecommand{\MR}{\relax\ifhmode\unskip\space\fi MR }
\providecommand{\MRhref}[2]{%
  \href{http://www.ams.org/mathscinet-getitem?mr=#1}{#2}
}
\providecommand{\href}[2]{#2}

\end{document}